\pgfplotsset{compat=1.16} 
\definecolor{red}{rgb}{0.7,0.15,0.15}
\definecolor{green}{rgb}{0,0.5,0}
\definecolor{blue}{rgb}{0,0,0.7}
\makeatletter \@addtoreset{equation}{section}
\newtheorem{theorem}{Theorem}[section]
\newtheorem{assumption}[theorem]{Assumption}
\newtheorem{lemma}[theorem]{Lemma}
\newtheorem{proposition}[theorem]{Proposition}
\newtheorem{definition}[theorem]{Definition}
\newtheorem{remark}[theorem]{Remark}
\def \E{\mathbb{E}}
\def \F{\mathbb{F}}
\def \N{\mathbb{N}}
\def \P{\mathbb{P}}
\def \Q{\mathbb{Q}}
\def \R{\mathbb{R}}
\def\Ac{{\cal A}}
\def\Cc{{\cal C}}
\def\Ec{{\cal E}}
\def\Fc{{\cal F}}
\def\Pc{{\cal P}}
\def\Rc{{\cal R}}
\def\Tc{{\cal T}}
\def\Uc{{\cal U}}
\def\Zc{{\cal Z}}
\DeclareMathOperator*{\argmin}{argmin}
\DeclareMathOperator*{\esssup}{ess\,sup}
\title{Pollution regulation for electricity generators in a transmission network\footnote{The authors gratefully acknowledge the support of the ANR project PACMAN ANR-16-CE05-0027, the PGIF project `Massive entry of renewable energy in Chile: operation, storage and intermittency', the ANID FONDECYT/POSTDOCTORADO/3201005, and Basal Program CMM-AFB 170001 from ANID (Chile). The authors would also like to thank Ren\'e A\"{i}d and Thibaut Mastrolia for helpful discussions on the model of the paper. The authors would like to thank Marcelo Matus for helping with the calibration of the numerical simulations.}}
\author{    Nicol\'as {\sc Hern\'andez Santib\'a\~nez} \thanks{Center for Mathematical Modeling, Universidad de Chile, Santiago, Chile. nhernandez@dim.uchile.cl.}
 \and Alejandro {\sc Jofr\'e}\footnote{Center for Mathematical Modelling and Dept. of Mathematical Engineering University of Chile, Casilla 170/3, Correo 3, Santiago, Chile, ajofre@dim.uchile.cl}  
 \and Dylan {\sc Possama\"{i}} \footnote{ETH Z\"urich, Department of Mathematics, R\"amistrasse 1001, 8092 Z\"urich, Switzerland, possamai@math.ethz.ch.}}
        \date{\today}
\begin{document}

\maketitle

\begin{abstract}
In this paper we study a pollution regulation problem in an electricity market with a network structure. The market is ruled by an independent system operator (ISO for short) who has the goal of reducing the pollutant emissions of the providers in the network, by encouraging the use of cleaner technologies. The problem of the ISO formulates as a contracting problem with each one of the providers, who interact among themselves by playing a stochastic differential game. The actions of the providers are not observable by the ISO which faces moral hazard. By using the dynamic programming approach, we represent the value function of the ISO as the unique viscosity solution to the corresponding Hamilton--Jacobi--Bellman equation. We prove that this solution is smooth and characterise the optimal controls for the ISO. Numerical solutions to the problem are presented and discussed. We consider also a simpler problem for the ISO, with constant production levels, that can be solved explicitly in a particular setting. 

\vspace{5mm}

\noindent{\bf Key words:} pollution regulation; electricity networks; transmission losses; contract theory; moral hazard.
\vspace{5mm}

\noindent{\bf AMS 2000 subject classifications:} 91B76, 91A43, 49L20

\end{abstract}

\section{Introduction}

One of the major issues related to efforts to mitigate global warming is related to how one can integrate pollution limits in the dynamic of energy markets. Climate change is a threat for future generations and the world is taking actions, such as the Paris agreement or the more recent COP 26, to reduce the associated risks and consequences. With the goal of limiting the increase in the global average temperature to well below 2$^{\circ}$C above pre-industrial levels and to pursue efforts to limit the temperature increase to 1.5$^{\circ}$C, all countries must find ways to reduce emissions as soon as possible.\footnote{For more details, see \url{http://unfccc.int/resource/docs/2015/cop21/eng/l09r01.pdf}.}  In the energy industry, one way this can be achieved is by creating incentives for producers to use and develop cleaner technologies.

\medskip
A common design in liberalised electricity markets involves wholesale trading through a completely integrated structure, in which generators and consumers participate in auctions.  In the bid-based market, the power generators submit cost functions and a central agent, referred to as independent system operator (ISO), determines the rules for market clearing, while optimising the system operations. In the standard model (see for instance \citeauthor*{escobar2010monopolistic} \cite{escobar2010monopolistic}), the ISO runs a minimum cost program to distribute the total power production among the available producers. It is assumed that the generators are distributed in a transmission network, connected to the locations where demand is concentrated. This is an important feature of the problem, since the structure of the network and its physical properties restrict the choices available to the ISO. Once the generators bid their cost functions, the ISO determines each of their productions and transmissions in the network.

\medskip
In this paper we propose a model in which the ISO also provides incentives to the firms in the market to reduce pollution. We assume that the rules of the auction are of public knowledge, and we focus on the second part of the process, when the generators have already bid their cost functions and the ISO must allocate production. The optimisation performed by the ISO is subject to demand and capacity constraints, and the shape of the network determines the feasible transmissions. As in \cite{escobar2010monopolistic}, we include losses when power is transmitted trough the network. Compared to the literature, we generalise the problem of the ISO in two ways: firstly, by incorporating to its objective function the social cost of pollution, and secondly by making the ISO offer a contract to each energy provider with incentives to reduce the pollutant emissions associated to their production. More precisely, each provider will receive a remuneration (or fine) depending on the pollution level in the environment. The  energy providers will therefore perform private efforts to reduce their emissions, such as acquiring modern devices to control pollution or investing in the use of cleaner technologies, and moral hazard will arise in the relationship between the producers and the ISO. We model the efforts of the providers as the percentage by which they reduce their emissions, which is bounded from below by a constant depending on the characteristics on the firm, mainly the technologies it uses to produce electricity.

\medskip
The methodology we use is the dynamic programming approach in contract theory, applied to a layer of agents interacting through a Nash equilibrium. Given any contract offered by the ISO, we identify the set of Nash equilibria of the producers as solutions to a multidimensional backward stochastic differential equation (BSDE for short). This allows to reformulate the problem of the ISO as a standard stochastic control problem in which it provides incentives to the producers by controlling their certainty equivalent processes, which become state processes in this formulation. An important point of our work is that, given the high dimensional problem of the principal\footnote{It has $2N+1$ states variables, where $N$ is the number of generators in the network.}, which is difficult to treat theoretically and numerically, we manage to prove the smoothness of the value function and that it corresponds to the unique viscosity solution of the Hamilton--Jacobi--Bellman equation associated to the control problem. As a consequence we obtain the existence of an optimal contract, a type of result which is not common in the literature. We also provide a benchmark setting in which an intermediate problem for the ISO can be solved explicitly. 

\medskip
The full problem of the ISO is approached numerically, set on a simplified version of the Chilean market. We show that the increase of pollution can be reduced considerably (around thirty percent in a three-months period) if the ISO signs a contract with each provider in the network. Such contract will cover the cost of production and the effort to reduce emissions and will penalise the pollution levels. The form of the optimal contract makes it easy to be implemented, since the ISO just need to observe dynamically pollution levels and adjust accordingly the payments/fines through the control of a sensibility process.\footnote{Namely, the process $Z$ in Equation \eqref{eq:optimal-contract}.} As a consequence, the social cost is reduced to less than half of its value in the absence of regulation. We compute also the production costs when there is no regulation to have a measure of the compensation that a private ISO, who does not value pollution, would have to receive (for instance from the government) to execute our program. The pollution cost turns out to be much higher than production costs which means that a private entity would have to be almost completely compensated.  Finally, we show that as moral hazard is stronger, the effect of the contracts diminishes. In the limit case the producers will perform practically no reduction efforts and the productions and transmissions in the network will be constant in time. 

\medskip
\textbf{Related literature}. There are some works that study the effect of network constraints on the electricity market. \citeauthor*{borenstein2000competitive} \cite{borenstein2000competitive} show that the capacity of transmission lines can determine the degree of competition of the generators. \citeauthor*{escobar2010monopolistic} \cite{escobar2010monopolistic} study the problem of the ISO in a network with resistance losses, when the goal of the ISO is to minimise the total cost of production. The authors prove that resistance losses matter, as they affect the competition between the producers and allow them to bid higher marginal costs in the auction. 

\medskip
The first, seminal paper on principal--agent problems in continuous-time is by \citeauthor*{holmstrom1987aggregation} \cite{holmstrom1987aggregation}. \citeauthor*{sannikov2007agency} \cite{sannikov2007agency} was the first to use, in continuous-time, the continuation value of the agent as a state variable for the problem of the principal. This idea was formalised by \citeauthor*{cvitanic2018dynamic} \cite{cvitanic2018dynamic}, where the authors develop the dynamic programming approach for principal--agent problems. There is an extensive literature on pollution regulation under moral hazard, which we briefly revisit. \citeauthor*{segerson1988uncertainty} \cite{segerson1988uncertainty} studies incentives schemes for static dispersed pollution problems, with per-unit and lump sum taxes, in a context with unobservable actions. In a non-static setting, \citeauthor*{xepapadeas1992environmental} \cite{xepapadeas1992environmental} designs inter-temporal incentives schemes and discusses the inefficiency of adopting static rules to a dynamic framework of pollution accumulation. \citeauthor*{athanassoglou2010dynamic} \cite{athanassoglou2010dynamic} extends the previous models to the stochastic setting and studies a differential game of pollution control with polynomial profit functions. \citeauthor*{chambers1996non} \cite{chambers1996non} consider a static problem of regulation of diffuse emissions which formulates as a multi-task principal--agent problem. \citeauthor*{bontems2006regulating} \cite{bontems2006regulating} study the static third-best problem of nitrogen pollution regulation under hidden information and moral hazard. In the recent years, pollution regulation in the principal--agent model has been studied by \citeauthor*{la2019dynamic} \cite{la2019dynamic}, in the context of agriculture and extractive industries, where the regulator provides incentives to the firms to reduce their diffuse emissions. \citeauthor*{aid2021optimal} \cite{aid2021optimal} study the problem of a regulator allocating emission allowances to the firms, with the goal of reducing the total carbon emissions. The main differences between these two works and the present paper, is the network structure that we assume for the providers, and the absence of moral hazard in \cite{aid2021optimal}. Moreover, the authors assume in \cite{la2019dynamic} that it is socially optimal that every firm performs maximal effort, which we do not.

\medskip
Our problem falls in the category of a contracting problem with multiple agents. \citeauthor*{holmstrom1982moral} \cite{holmstrom1982moral} was the first to study moral hazard with many agents, in discrete-time. \citeauthor*{elie2019contracting} \cite{elie2019contracting} studied the continuous--time problem and linked the Nash equilibria of the agents to the solutions to a multidimensional BSDE. \citeauthor*{hubert2020continuous} \cite{hubert2020continuous} extends the dynamic programming approach to a hierarchical setting with multiple agents and managers. 

\medskip
Other works where incentives in the energy market are studied include \citeauthor*{alasseur2020adverse} \cite{alasseur2020adverse}, where a dynamic pricing of electricity is designed for a population of heterogeneous clients. \citeauthor*{aid2018optimal} \cite{aid2018optimal} design electricity demand response contracts which impact both the average consumption and its variance. \citeauthor*{elie2021mean} \cite{elie2021mean} study also the problem of demand response contracts, by considering a continuum of consumers with mean-field interaction and consumption with common noise. \citeauthor*{jaimungal2021mean} \cite{jaimungal2021mean} study the regulatory problem in a market with mean-field agents interacting through solar renewable energy certificates.  \citeauthor*{campbell2021deep}  \cite{campbell2021deep} develop a deep learning algorithm to solve principal--agent mean-field games with market-clearing conditions, such as the one in \cite{jaimungal2021mean}.

\medskip
The paper is organised as follows. \Cref{sec-model} describes the model, the optimisation problem faced by the ISO and the game played by the producers. The characterisation of the Nash equilibria of the game and the solution to the problem of the ISO are presented in \Cref{sec-solving-nash} and \Cref{sec-solving-iso} respectively. In \Cref{sec-example} we present a simpler problem for the ISO in a setting that allows to find explicit solutions. \Cref{sec-numerics} provides numerical solutions for the general problem.



\medskip
\textbf{Notations:} We let $\N$ be the set of integers, $\N^\star$ the set of positive integers, and $\R_+$ the set of non-negative real numbers. For any $n\in\N^\star$, $i\in\{1,\dots,n\}$ and any vector $v\in\R^n$, we denote by $v^i$ the $i$-th coordinate of $v$ and by $v^{-i}$ the vector obtained by suppressing the $i$-th coordinate of $v$. For $u\in\R$, we denote by $u\otimes_i v$ the vector in $w\in\R^{n+1}$ whose $i$-th coordinate is equal to $u$ and such that $w^{-i}=v$. We use the same notation for stochastic processes. $^\top$ denotes the transpose operation in $\R^N$. For a function $v:\R\times\R^n \rightarrow \R$ with arguments $(t,\ell)$ we denote by $v_t$ and $v_\ell$ its partial derivatives. For a compact set $C\subset\R^n$, we denote by $\Pi_C:\R^n\longrightarrow C$ the projection function over $C$.

\section{The model}\label{sec-model}

We model the second part of the power auction, when the providers have already bid their cost functions. We aim at solving the ISO's problem, in which it decides the amount of power that each producer has to generate, and the non-negative flows they will send to each other, but this time giving incentives to the producers to reduce their pollutant emissions. We model the problem in continuous-time, over a finite horizon $[0,T]$, where $T>0$ is the maturity of the regulation.

\medskip
{\bf $\bullet$ The network.} We consider a network structure, where each node represents one of the producers who can generate power and send flow to its neighbours over transmission lines. Let thus $(V,E)$ denote an oriented graph, where $V$ is the set of vertexes and $E\subset V\times V$ is the set of edges. We consider a finite network so we write without loss of generality $V=\{1,\dots,N\}$, where $N\in\N^\star$ is the number of producers (nodes). The edges of the graph represent the transmission lines and are denoted by $e\in E$.

\medskip
We assume that each node $i\in\{1,\dots,N\}$ has a power demand $D_i \geq 0$, and power can be sent through the available lines in the network---that is, the edges in the graph. Each producer $i\in\{1,\dots,N\}$ is responsible for generating the amount of power $q_t^i$ and send the flow $\phi^e_t$ through the edge $e\in E$ at time $t\in[0,T]$. Both $q_t^i$ and $\phi^e_t$ are decided by the ISO. The dispatching problem of the ISO is subject to nodal balances, generation and transmission constraints, which we now describe.

\medskip
{\bf $\bullet$ Constraints in the network.} First, the production plan must satisfy the demand at each node. There are power flow losses in the transmission lines, that we approximate by a quadratic function. If the flow over $e\in E$ is $\phi^e$, the loss is given by $r_e (\phi^e)^2$, where $r_e\geq 0$ is the line resistance. We assume the loss is split equally between the two nodes, each one of them suffering half of it. Let $K_i$ be the set of edges connecting node $i$, and $\mathrm{sgn}(e,i)$ is equal to 1 or -1 depending on whether $e$ enters node $i$ or not. Then the plan must satisfy
\begin{equation}\label{eq:balance-constraint}
\frac12\sum_{e\in K_i} r_e (\phi^e_t)^2 + D_i = q_t^i + \sum_{e\in K_i} \phi^e_t \mathrm{sgn}(e,i), \;\forall i\in\{1,\dots,N\}, \; \forall t\in[0,T].
\end{equation}

Second, each producer has a capacity constraint denoted by $Q^i\geq 0$, so that
\begin{equation} \label{eq:production-constraint}
q_t^i \in [0,Q^i], \; \forall i\in\{1,\dots,N\}, \;\forall t\in[0,T].
\end{equation}

Third, each transmission line $e\in E$ has a maximum safe capacity characterised by the quantities $0\leq \underline{\phi}^e\leq \overline{\phi}^e$ so that
\begin{equation} \label{eq:flow-constraint}
\phi^e_t \in [\underline{\phi}^e,\overline{\phi}^e], \; \forall i\in\{1,\dots,N\}, \;\forall t\in[0,T].
\end{equation}

As an example, in Figure \ref{fig:example} the network is given by $V=\{1,2\}$ and $E=\{(1,2)\}$. The production $q^1 \equiv 6$, $q^2 \equiv 9$ together with the flow $\phi^{1,2}\equiv 1$ satisfy the constraints. We see that node 2 does not have the capacity to satisfy its own demand and it can cover it by receiving a flow from node 1.

\begin{figure}[H] 
\begin{center}
		\includegraphics[scale=.7]{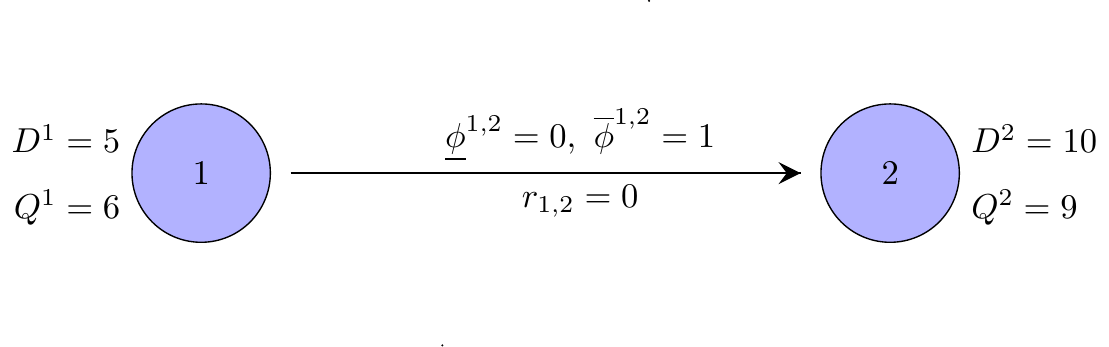} 	
\caption{Example of the network constraints.} \label{fig:example}
\end{center}
\end{figure}
The cost of production at node $i$ is given by the function $c_i:\R_+\longrightarrow\R_+$. In a standard power auction, the ISO determines the production plan $(q^i)_{i\in\{1\,\dots,N\}}$ and the flows $(\phi^e)_{e\in E}$ that minimise the overall cost of production and satisfy the network constraints \eqref{eq:balance-constraint}, \eqref{eq:production-constraint}, and \eqref{eq:flow-constraint}. In our model, the ISO takes pollution levels into consideration and also encourages producers to reduce their emissions, as we detail now.

\medskip
{\bf $\bullet$ Pollution and moral hazard.} {\it A priori}, for any $i\in\{1,\dots,N\}$, if producer $i$ generates an amount $q_t^i$ of power at time $t\in[0,T]$, it will contribute to the pollution in the environment by the amount $p_i(q_t^i)$, where $p_i:\R_+\longrightarrow\R_+$ is the polluting function of producer $i$. Therefore, denoting by $L$ the total pollution process, we assume that it will follow the dynamics
\[
\mathrm{d}L_t = \sum_{i=1}^N p_i(q_t^i) \mathrm{d}t + \sigma \mathrm{d}W^0_t,
\]
where $W^0$ is a standard Brownian motion, intended to represent randomness in the weather condition that cannot be controlled, and $\sigma>0$. Even if in theory the process $L$ can take negative values, the probability of such event occurring decreases with the values of the initial pollution $L_0$ and the polluting functions $p_i$. Given the real-life values of the modelling parameters that we have in mind, we justify the choice of this simple dynamic for the pollution with the fact that, as shown in Section \ref{sec-numerics}, the process $L$ will never become negative in our numerical simulations. 

\medskip
The ISO offers contracts to each of the producers, with incentives to reduce their pollutant emissions. We suppose that producer $i\in\{1,\dots,N\}$ can exert an effort process $a^i$ to reduce its own pollution level, with values in a set $A_i\subset[0,1]$, and associated to a second cost function $h_i:A_i \longrightarrow \R_+$. When all the producers choose their efforts, they impact the distribution of the pollution process $L$ as follows
\[
\mathrm{d}L_t = \sum_{i=1}^N  (1-a^i_t)p_i(q_t^i) \mathrm{d}t + \sigma \mathrm{d}W^a_t,
\]
where the process $W^a$ is a Brownian motion under the measure $\P^a$ induced by the joint efforts of the producers. This is a weak formulation of the problem, which is standard in contract theory. Notice that the process $L$ is not controlled directly, but its distribution. All the details on the construction of the weak formulation are given in \Cref{ap:model}, as well as the set of admissible actions $\Ac_i$ and joint actions $\Ac$ for the $i$-th producer and all them respectively.

\medskip
{\bf $\bullet$ The regulation contract.} The ISO chooses a production plan $q=(q^1,\dots,q^N)^\top$ and transmission plan $\phi=(\phi^e)_{e\in E}$ for the producers and offers terminal remunerations $\xi=(\xi^1,\dots,\xi^N)^\top$ according to the pollution in the environment. That is, the production and transmission processes are adapted to $\mathbb{F}:=(\mathcal{F}_t)_{t\in [0,T]}$, the (completed) filtration generated by $L$, and the remunerations are $\mathcal{F}_T$-measurable random variables.\footnote{See Appendix \ref{ap:model} for the definition of the filtration $\mathbb{F}$.} In particular, efforts of each producer to reduce their emissions are unobservable by the ISO, which thus faces \emph{moral hazard}. We refer to a triplet $(q,\phi,\xi)$ as a contract.

\medskip
{\bf $\bullet$ Game between the producers.} We suppose the ISO cannot distinguish the individual contributions of each producer to the total pollution. Since the contracts are written in terms of the process $L$, this means the actions of a particular producer affect the welfare of all of them. The producers play an $N$-player differential game when deciding their efforts and we assume they look for a Nash equilibrium of the game.

\medskip
Given a production and transmission plan $(q,\phi)\in\Pc$ and remunerations $\xi\in\Rc$\footnote{The sets $\Pc$ and $\Rc$ just mentioned are defined in the next section.}, if the $N$ producers perform the joint action $a\in\Ac$, the utility obtained by agent $i\in\{1,\dots,N\}$ is equal to 
\[
U^i_0(a^i,a^{-i},\xi^i,q):= \E^{\P^{a}} \bigg[ \Uc_A\bigg( \xi^i - \int_0^T  \big(h_i(a_s^i)+c_i(q_s^i) \big) \mathrm{d}s \bigg) \bigg].
\]
The best response of producer $i$ to the actions $a^{-i}\in\Ac^{-i}$ of the others is obtained by solving the following problem
\begin{equation} \label{eq:producer-problem}
V_0^i(a^{-i},\xi^i,q) := \sup_{a^i\in\Ac_i}  \E^{\P^a} \bigg[ \Uc_A \bigg( \xi^i - \int_0^T  \big(h_i(a_s^i)+c_i(q_s^i) \big)\mathrm{d}s  \bigg) \bigg],
\end{equation}
with the CARA utility function $\Uc_A:\R\longrightarrow \R$ given by $\Uc_A(x):=-\mathrm{e}^{-\rho x}$, $x\in\R$, for some $\rho>0$. 

\medskip
{\bf $\bullet$ Social cost and constraints of the ISO.} On the other hand, the goal of the ISO is to minimise the social cost, taking into account the level of pollution. Therefore, the ISO solves the following problem
\begin{equation}\label{eq:ISO-problem}
\inf_{(q,\phi,\xi)\in\Cc}  \E^{\P^{a^\star(q,\xi)}} \bigg[ \sum_{i=1}^N \int_0^T  c_i(q^i_s) \mathrm{d}s + \int_0^T \Lambda(L_s - \ell_o) \mathrm{d}s +  \sum_{i=1}^N  \xi^i \bigg],
\end{equation}
where $a^\star(q,\xi)$ is a Nash equilibrium\footnote{We prove in Section \ref{sec-solving-nash} the existence of a unique Nash equilibrium to the contract, and it does not depend on the flow $\phi$.} to the contract $(q,\phi,\xi)$ that the producers agree on playing (more details are given in Section \ref{sec-solving-nash}), $\ell_o>0$ is the target level of pollution and $\Lambda:\R\longrightarrow\R$ is the cost of deviating from said target. We list the properties assumed for the function $\Lambda$, as well as the functions $(c_i, p_i, h_i)_{i\in\{1,\dots,N\}}$, in \Cref{ap:model}.

\medskip
The optimisation problem of the ISO is subject to constraints \eqref{eq:balance-constraint}, \eqref{eq:production-constraint}, \eqref{eq:flow-constraint}. Additionally, in order to agree to this contract, each producer must obtain a minimum value of expected utility denoted by $R_0^i$
\begin{align}\label{eq:participation-constraint}
V_0^i(a^{\star,-i},\xi^i,q) \geq R_0^i, \;\forall i\in\{1,\dots,N\}.
\end{align}
Notice that we have not yet specified the space of controls $\Cc$ over which the ISO optimise. We do this in the next section so Problem \eqref{eq:ISO-problem} is properly defined.

\begin{remark}
There are different possibilities for the values $(R_0^i)_{i\in\{1,\dots,N\}}$. We take them as exogenous, representing a reservation utility that the producers can obtain if they do not enter the auction. One could also take, for any $i\in\{1,\dots,N\}$, $R_0^i$ as the endogenous value the producer $i$ would obtain without the pollution regulation of the {\rm ISO}, in a standard auction. Such problem is the one studied in {\rm\cite{escobar2010monopolistic}}.
\end{remark}

\begin{remark}\label{rm-piecewise}
The setting that we have in mind is the one in which each provider has different technologies for producing power. In this case, the cost functions $(c_i)_{i\in\{1,\dots,N\}}$ will usually be piecewise linear, and each provider will produce at the lowest marginal cost until the cheapest available technology is saturated. Assuming that the pollution of each technology is linear, it follows that each function $(p_i)_{i\in\{1,\dots,N\}}$, impacting the drift of the process $L$, is also piecewise linear.
\end{remark}

\subsection{Nash equilibria and problem of the ISO} \label{sec-solving-nash}

In this section we define the set of Nash equilibria associated to any given contract. This is important for writing formally the optimisation problem of the ISO. We proceed then to provide a characterisation of Nash equilibria through multidimensional BSDEs which allows us to reformulate the problem of the ISO as a standard stochastic control problem.

\medskip
Before doing so, let us define the set of admissible remunerations $\mathcal{R}$ as the ones satisfying the following integrability condition\footnote{See Appendix \ref{ap:model} for the definition of the underlying probability space $(\Omega,\Fc,\P)$.} 
\[
\Rc := \Big\{ \xi: \R^N\text{-valued,}\;\Fc_T\text{-measurable}\; \text{r.v.}\; \text{such}\; \text{that}\;  \E\big[|\Uc_A(\xi^i)|^p\big] < \infty,\; \text{for}\; \text{some}\; p>1,\;  i\in\{1,\dots,N\}     \Big\}.
\]
Next, let $\hat{P}$ be the set of feasible plans, that is 
\[
\hat{P} := \Bigg\{ (q,\phi)\in \R_+^N \times \R_+^E: \forall (e,i)\in E\times\{1,\dots,N\},\; (\phi^e,q^i) \in[\underline{\phi}^e,\overline{\phi}^e]\times [0,Q^i], \; D_i = q^i + \sum_{e\in K_i} \bigg( \phi^e \text{sgn}(e,i) - \frac{r_e}{2} (\phi^e)^2 \bigg) \Bigg\}.
\]
We assume the production problem is feasible, that is, the set $\hat{P}$ is non-empty. We define then $\Pc$ as the set of $\mathbb{F}$-predictable, $\hat{P}$-valued processes.

\medskip
Fix now a contract $(q,\phi,\xi)\in\Pc\times\Rc$ offered by the ISO. If the $N$ producers perform the joint action $a\in\Ac$, recall from the previous section, for any $i\in\{1,\dots,N\}$, the utility obtained by agent $i$, $U^i_0(a^i,a^{-i},\xi^i,q)$, and the best reaction of producer $i$ given the actions of the others, $V_0^i(a^{-i},\xi^i,q)$.

\begin{definition}
Given a contract $(q,\phi,\xi)\in \Pc\times\Rc$, the joint action $a^\star\in\Ac$ is a Nash equilibrium for the producers, denoted by $a^\star\in\mathrm{NE}(q,\phi,\xi)$, if for every $i\in\{1,\dots,N\}$
\[
V_0^i(a^{\star,-i},\xi^i,q)= U_0^i(a^{\star,i},a^{\star,-i},\xi^i,q).
\]
\end{definition}
We will enforce that admissible contracts must generate only one Nash equilibrium. We will see in Theorem \ref{th:main} below that this restriction is without loss of generality. Thus, we define the set of admissible contracts $\Cc$ as
\[
\Cc := \Big\{ (q,\phi,\xi)\in\Pc\times\Rc:  \text{NE}(q,\phi,\xi)=\{ a^\star\},\;  \text{and}\; V_0(a^{\star,-i},\xi^i,q) \geq R_0^i, \;\forall i\in\{1,\dots,N\} \Big \}.
\]
For any contract $(q,\phi,\xi)\in\Cc$, we thus denote by $a^\star(q,\phi,\xi)$ the unique element in $\text{NE}(q,\phi,\xi)$. We can now define formally the optimisation problem of the ISO \eqref{eq:ISO-problem} as
\begin{equation}\label{eq:ISO-problem-2}
V_0:=\inf_{(q,\phi,\xi)\in\Cc} \E^{\P^{a^\star(q,\phi,\xi)}} \bigg[ \sum_{i=1}^N \int_0^T  c_i(q^i_s) \mathrm{d}s + \int_0^T \Lambda(L_s - \ell_o) \mathrm{d}s +  \sum_{i=1}^N  \xi^i \bigg].
\end{equation}

The main difficulties associated with Problem \eqref{eq:ISO-problem-2} are the general form of the remunerations $\xi^i$ and the determination of $a^\star(q,\phi,\xi)$, which result in a non-standard stochastic control problem. By applying the so-called dynamic programming approach, introduced in \cite{cvitanic2018dynamic}, we are able to characterise the Nash equilibrium associated to a contract, and reduce the set of remunerations without loss of generality to a suitable class which allows to associate a Hamilton--Jacobi--Bellman equation to the problem of the ISO.

\medskip
The next theorem summarises our main results, and its proof is deferred to \Cref{ap:main}.

\begin{theorem}\label{th:main}
The set of admissible remunerations can be represented as the terminal values of the following family of processes
\[
Y_t^{y,q,Z} = y + \int_0^t f(q_s,Z_s) \mathrm{d}s + \int_0^t Z_s \mathrm{d}L_s,\; t\in[0,T],
\]
with the function $f:\R_+^N\times \R^N\longrightarrow\R^N$ given by \eqref{eq:generator}. 
That is
\[
\Cc = \Big\{ (q,\phi,Y_T^{y,q,Z}): (q,\phi,Z,y)\in\Pc\times\Zc\times\R^n,\; \text{\rm with}\; \Uc_A(y^i)\geq R_0^i,\; \forall i\in\{1,\dots,N\} \Big\},
\]
where $\Zc$ is the class of processes defined in \eqref{eq:classZ}. For any remuneration with the form $\xi=Y_T^{y,q,Z}$, for some  $(q,\phi,Z,y)\in\Pc\times\Zc\times\R^n$, there exists a unique Nash equilibrium $a^\star$ which satisfies, for every $i\in\{1,\dots,N\}$,  $V_0^i(a^{\star,-i},\xi^i,q)=\Uc_A(y^i)$ and 
\begin{equation} \label{eq:a-star}
\{a^{\star,i}_s\} = \argmin_{a\in A_i} \big\{h_i(a)-Z_s^i (1-a)p_i(q^i_s) \big\},\; \mathrm{d}s\otimes\mathrm{d}\P\text{\rm--a.e.}
\end{equation}
\end{theorem}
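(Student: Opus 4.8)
The plan is to prove the statement in two complementary directions that together yield both the representation of $\Cc$ and the characterisation of the equilibrium. Throughout I would exploit the exponential (CARA) form of $\Uc_A$, which reduces each producer's optimisation to a pointwise Hamiltonian minimisation and singles out the certainty equivalent as the natural state variable. A preliminary computation pins down the generator: writing $\mu^a:=\sum_{j}(1-a^j)p_j(q^j)$ for the drift of $L$ under $\P^a$, one finds that the driver in \eqref{eq:generator} must be, componentwise,
\[
f_i(q,z)=c_i(q^i)+\tfrac{\rho}{2}\sigma^2(z^i)^2+\inf_{a\in A_i}\big\{h_i(a)-z^i(1-a)p_i(q^i)\big\}-z^i\sum_{j\neq i}\big(1-a^{\star,j}(z^j,q^j)\big)p_j(q^j),
\]
that is, producer $i$'s Hamiltonian evaluated at its minimiser, a quadratic risk-compensation term, and a coupling to the others entering only through the \emph{bounded} coefficients $(1-a^{\star,j})p_j(q^j)$; this boundedness is what will eventually make the associated multidimensional BSDE tractable.

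For the sufficiency direction I would fix $(q,\phi,Z,y)\in\Pc\times\Zc\times\R^N$, set $\xi^i:=Y^{y,q,Z,i}_T$, and verify via the martingale optimality principle that $a^\star$ defined by \eqref{eq:a-star} is a Nash equilibrium with $V_0^i(a^{\star,-i},\xi^i,q)=\Uc_A(y^i)$. Concretely, fixing the others at $a^{\star,-i}$ and letting producer $i$ play an arbitrary admissible $a^i$, I would introduce $R^i_t:=\Uc_A\big(Y_t^i-\int_0^t(h_i(a^i_s)+c_i(q^i_s))\mathrm{d}s\big)$ and apply It\^o's formula under $\P^a$. Substituting $\mathrm{d}L_t=\mu^a_t\mathrm{d}t+\sigma\mathrm{d}W^a_t$ and using the explicit form of $f_i$, the drift of $R^i$ equals a strictly positive factor times the difference between the minimal value of $h_i(a)-Z^i(1-a)p_i(q^i)$ over $a\in A_i$, attained at $a^{\star,i}$ as in \eqref{eq:a-star}, and its value at the chosen $a^i$; this difference is non-positive and vanishes exactly when $a^i=a^{\star,i}$. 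Hence $R^i$ is a $\P^a$-supermartingale for every $a^i$ and a $\P^a$-martingale at $a^{\star,i}$, and taking expectations between $0$ and $T$, with $R_0^i=\Uc_A(y^i)$ and $R_T^i=\Uc_A(\xi^i-\int_0^T(h_i(a^i_s)+c_i(q^i_s))\mathrm{d}s)$, gives the asserted best-response value and identifies the equilibrium. Two technical points need attention: a measurable-selection argument (using continuity of $h_i$, compactness of $A_i$ and the convexity assumed in \Cref{ap:model}) making $a^{\star,i}$ an $\mathbb{F}$-predictable process with a unique minimiser, and the integrability built into $\Zc$ in \eqref{eq:classZ} together with $\xi\in\Rc$, which upgrades the local (super)martingales to genuine ones so that the expectations are licit. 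Uniqueness of the minimiser then also yields uniqueness of the equilibrium for contracts of this form.

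For the necessity direction I would show that every admissible $\xi\in\Rc$ admits the representation $\xi^i=Y^{y,q,Z,i}_T$. With $q$ and the other producers' efforts held fixed, producer $i$'s best-response value process solves a scalar BSDE whose driver is the maximised Hamiltonian; the martingale representation theorem relative to the filtration generated by $L$ supplies the integrand $Z^i$, and evaluating at $t=0$ produces $y^i$ with $\Uc_A(y^i)=V_0^i$. Imposing simultaneously that all producers best-respond turns this into the coupled system of BSDEs with terminal condition $\xi$ and generator $f$, whose solution delivers the pair $(y,Z)\in\R^N\times\Zc$ and hence the representation; the participation constraints $\Uc_A(y^i)\geq R_0^i$ are precisely those inherited from $\Cc$.

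The main obstacle is the well-posedness of this coupled system, since multidimensional quadratic BSDEs may in general fail to admit solutions, or to admit unique ones, so no off-the-shelf theory applies. The saving structure is that $f$ is \emph{diagonally} quadratic: $f_i$ is quadratic only in its own variable $z^i$, while its off-diagonal dependence passes through the uniformly bounded coefficients $(1-a^{\star,j}(z^j,q^j))p_j(q^j)$, so the coupling is merely a bounded perturbation of the drift rather than a genuine quadratic one. I would make this precise by changing measure to $\P^{a^\star}$, under which a short computation collapses the drift of each $Y^i$ to $c_i(q^i)+\tfrac{\rho}{2}\sigma^2(Z^i)^2+h_i(a^{\star,i})$ and decouples the system into $N$ essentially scalar quadratic BSDEs. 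Existence, uniqueness and the a priori estimates for these---hence also the uniqueness of the Nash equilibrium, which renders the single-equilibrium restriction defining $\Cc$ harmless---would then follow from the standard theory of quadratic BSDEs under the exponential-integrability conditions encoded in $\Rc$ and $\Zc$.
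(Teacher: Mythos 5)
Your generator computation and the first two directions of your argument coincide with the paper's own proof: the sufficiency step (define $R^i_t=\Uc_A\big(Y^i_t-\int_0^t(h_i(a^i_s)+c_i(q^i_s))\,\mathrm{d}s\big)$, show it is a supermartingale of class (D) under $\P^{a^i\otimes_i a^{\star,-i}}$ for every deviation $a^i$ and a martingale at $a^{\star,i}$, with the unique pointwise minimiser coming from the strict convexity of $h_i$) is exactly \Cref{prop:optimal-effort}$(ii)$ together with \Cref{prop:contracts} and \Cref{lemma:a-star}; and the necessity step (best-response value processes, martingale representation in the filtration of $L$, stacking the producers into the coupled system with generator $f$) is \Cref{prop:optimal-effort}$(i)$.

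The gap is in your last paragraph. First, the ``main obstacle'' you identify is not an obstacle in this approach: abstract existence for the coupled system is never used as an input. In the necessity direction the Nash equilibrium is \emph{given} (the contract is admissible, so $\mathrm{NE}(q,\phi,\xi)\neq\emptyset$), hence each producer's value process exists; the pair $(Y,Z)$ is then \emph{constructed} from these value processes via the martingale optimality principle, the multiplicative decomposition of negative martingales and martingale representation, so the coupled BSDE is solved by construction rather than by a well-posedness theorem. Second, the fix you propose would not work anyway: the measure $\P^{a^\star}$ under which you want to decouple the system is defined through $a^\star=a^\star(q,Z)$, i.e.\ through the very solution $Z$ whose existence you are trying to establish, so the Girsanov decoupling is circular as an existence device; and for uniqueness, two candidate solutions induce two \emph{different} measures, so the scalar theory cannot be applied under a common reference measure---besides, standard uniqueness results for scalar quadratic BSDEs (bounded terminal conditions, or convex generators with exponential moments of every order) are not covered by the single exponential moment of some order $p>1$ encoded in $\Rc$ and \eqref{eq:classZ}. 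Finally, note that the uniqueness of the Nash equilibrium asserted in the theorem is not obtained in the paper from BSDE uniqueness at all: it follows from the verification inequality $U^i_0(a^i,a^{\star,-i},\xi^i,q)\le\Uc_A(y^i)$ holding with equality only at the unique minimiser---precisely the argument you already gave at the end of your sufficiency paragraph---so your final paragraph is not only flawed but also redundant with the rest of your own proposal.
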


\begin{remark}
We see in the previous theorem that the dependence of the Nash equilibrium of the providers on the plan $(q,\phi)\in\Pc$ is only through the production $q$. For this reason, we drop $\phi$ from the notation for the rest of the paper.
\end{remark}

For $(q,\phi)\in\Pc$ and $Z\in\Zc$, we denote by $a^\star(q,Z)$ the Nash equilibrium given by Theorem \ref{th:main} and, abusing notations slightly, by $a^{\star,i}(q_s^i,Z_s^i)$ the minimising function in \eqref{eq:a-star}. We have therefore, the following reformulation of the problem of the ISO as a standard stochastic control problem
\begin{equation}\label{eq:V0-first}
V_0 = \inf_{(q,\phi,y,Z)\in \hat{\mathcal{C}}} \E^{\P^{a^\star(q,Z)}} \bigg[ \sum_{i=1}^N \int_0^T  c_i(q^i_s) \mathrm{d}s + \int_0^T \Lambda(L_s - \ell_0) \mathrm{d}s +  \sum_{i=1}^N  Y_T^{y,q,Z,i} \bigg],
\end{equation}
where the set $\hat\Cc$ is given by the reformulated constraints
\[
\hat \Cc:= \big\{ (q,\phi,y,Z)\in\Pc\times\mathbb{R}^n\times\Zc: \Uc_A(y^i) \geq R_0^i, \;\forall i\in\{1,\dots,N\}\big \}.
\]

\begin{remark}
The controlled process $Y^{y,q,Z}$ allows to tackle the problem of the {\rm ISO} by following the dynamic programming approach.  It represents the certainty equivalent of the producers and it plays the same role as the continuation utility process in principal--agent problems, see for instance {\rm\citeauthor*{sannikov2008continuous} \cite{sannikov2008continuous}}.
\end{remark}

\begin{remark}
The variable $y$ is not really part of the stochastic control problem, since in general, one solves a control problem for any given value of $y$. In our setting the optimisation over $y$ is direct and performed explicitly, see {\rm \Cref{prop:value-function-2}}.
\end{remark}

\subsection{Solving the problem of the ISO} \label{sec-solving-iso}
Theorem \ref{th:main} allows to reformulate the problem of the ISO as a standard stochastic control problem in which the ISO controls directly the production and transmission plans $(q,\phi)$ and controls the remunerations through the processes $(q,Z)$ and the initial values $y$.  The ISO solves \eqref{eq:V0-first}, with the dynamics of the controlled processes given by
\begin{align*}
L_t &= L_0 + \int_0^t \sum_{i=1}^N \big(1-a^{\star,i}(q_s^i,Z_s^i)\big)p_i(q^i_s)  \mathrm{d}s + \sigma \int_0^t \mathrm{d}W^{a^\star(q,Z)}_s, \; t\in[0,T],\\
Y_t^{y,q,Z,i} &  = y^i + \int_0^t \bigg(  h_i\big(a^{\star,i}(q_s^i,Z_s^i)\big) + c_i(q_s^i)+ \frac{\rho\sigma^2}{2} (Z^i_s)^2 \bigg) \mathrm{d}s + \int_0^t Z^i_s \sigma \mathrm{d}W^{a^\star(q,Z)}_s,\; t\in[0,T],\; i\in\{1,\dots,N\}.
\end{align*}
Due to the dynamics of the second controlled process, this problem can be further simplified. It turns out that the appropriate state variable for Problem \eqref{eq:V0-first} is the sum of the certainty equivalents $\hat {Y}^{y,q,Z}:= \sum_{i=1}^N  Y^{y,q,Z,i}$. Moreover, it is immediate to deduce the dependence on $y$ for the value function. We have thus the following equivalence whose proof can be found in Appendix \ref{app:iso}.
\begin{proposition}\label{prop:value-function-2}
The reformulated problem of the {\rm ISO} can be written as 
\[
V_0 = \hat V_0 - \sum_{i=1}^N \frac{1}{r}\log(-R_0^i),
\]
where $\hat V_0$ is the value of the following stochastic control problem
\begin{equation}\label{eq:V0-one-dimensional}
\hat V_0 := \inf_{(q,\phi,Z)\in\Pc\times \Zc}  \E^{\P^{a^\star(q,Z)}} \bigg[ \sum_{i=1}^N \int_0^T  \bigg( h_i\big(a^{\star,i}(q_s^i,Z_s^i)\big)+\frac{\rho\sigma^2}{2} (Z_s^i)^2+ 2 c_i(q^i_s)  \bigg) \mathrm{d}s + \int_0^T \Lambda( L_s - \ell_0) \mathrm{d}s  \bigg] .
\end{equation}

\end{proposition}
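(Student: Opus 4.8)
The plan is to reduce Problem \eqref{eq:V0-first} to Problem \eqref{eq:V0-one-dimensional} by computing the expectation of the terminal cost $\sum_{i=1}^N Y_T^{y,q,Z,i}$ explicitly, using the dynamics of the certainty-equivalent processes, and then decoupling the optimisation over the initial value $y$ from that over $(q,\phi,Z)$. First I would fix an admissible control $(q,\phi,y,Z)\in\hat\Cc$ and integrate the dynamics of each $Y^{y,q,Z,i}$ over $[0,T]$. Taking expectations under $\P^{a^\star(q,Z)}$ and using that $W^{a^\star(q,Z)}$ is a $\P^{a^\star(q,Z)}$-Brownian motion, the stochastic integral $\int_0^T Z_s^i \sigma\,\mathrm{d}W_s^{a^\star(q,Z)}$ vanishes in expectation, so that
\[
\E^{\P^{a^\star(q,Z)}}\big[Y_T^{y,q,Z,i}\big] = y^i + \E^{\P^{a^\star(q,Z)}}\bigg[\int_0^T\Big(h_i\big(a^{\star,i}(q_s^i,Z_s^i)\big)+c_i(q_s^i)+\tfrac{\rho\sigma^2}{2}(Z_s^i)^2\Big)\mathrm{d}s\bigg].
\]
Substituting this into \eqref{eq:V0-first} and merging the resulting $c_i$ term with the production cost $c_i$ already present in the objective produces the factor $2c_i$, yielding an objective of the form $\sum_{i=1}^N y^i + \E^{\P^{a^\star(q,Z)}}[\,\cdots\,]$, where the bracketed expectation is exactly the cost functional of \eqref{eq:V0-one-dimensional}.

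The second step exploits that this expression is additively separated in $y$. Crucially, by Theorem \ref{th:main} the equilibrium $a^\star(q,Z)$, and hence the measure $\P^{a^\star(q,Z)}$, depends only on $(q,Z)$ and not on $y$; the constraint set $\hat\Cc$ is likewise a product, with the participation constraints $\Uc_A(y^i)\geq R_0^i$ bearing on $y$ alone. I would therefore write
\[
V_0 = \inf_{\{y:\,\Uc_A(y^i)\geq R_0^i\}}\sum_{i=1}^N y^i \;+\; \inf_{(q,\phi,Z)\in\Pc\times\Zc}\E^{\P^{a^\star(q,Z)}}\bigg[\int_0^T\Lambda(L_s-\ell_0)\mathrm{d}s + \sum_{i=1}^N\int_0^T\Big(h_i\big(a^{\star,i}(q_s^i,Z_s^i)\big)+\tfrac{\rho\sigma^2}{2}(Z_s^i)^2+2c_i(q_s^i)\Big)\mathrm{d}s\bigg],
\]
the second infimum being exactly $\hat V_0$. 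The first infimum is a scalar problem: since $\Uc_A(y^i)=-\mathrm{e}^{-\rho y^i}$ and necessarily $R_0^i<0$, the constraint reads $y^i\geq -\tfrac{1}{\rho}\log(-R_0^i)$, and as the objective is increasing in each $y^i$ the minimum is attained by saturating every constraint, giving $\inf\sum_i y^i = -\sum_{i=1}^N\tfrac{1}{\rho}\log(-R_0^i)$. This is precisely the claimed additive constant (with $\rho$ in place of the $r$ appearing in the statement), and combining the two pieces yields the identity.

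The only genuinely delicate point is the vanishing of the stochastic-integral term in the first step: \emph{a priori} the process $\int_0^\cdot Z_s^i\sigma\,\mathrm{d}W_s^{a^\star(q,Z)}$ is merely a local martingale under $\P^{a^\star(q,Z)}$, so I would invoke the integrability built into the definition of the class $\Zc$ in \eqref{eq:classZ}, together with the admissibility of $a^\star$ and of the associated change of measure, to upgrade it to a true martingale of zero mean. This is where the care must go, the remainder being algebraic rearrangement and an elementary one-dimensional optimisation. I expect no difficulty in the decoupling itself, as it rests entirely on the $y$-independence of $a^\star$ provided by Theorem \ref{th:main}.
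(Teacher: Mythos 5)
Your proposal is correct and takes essentially the same route as the paper's own proof, which simply plugs $Y_T^{y,q,Z}$ into \eqref{eq:V0-first}, notes that the objective depends on the certainty-equivalent processes only through the initial values $y^i$, and implicitly saturates the participation constraints $y^i\geq-\tfrac{1}{\rho}\log(-R_0^i)$. Your write-up is in fact more explicit than the paper's one-line argument, both in isolating the zero-mean property of $\int_0^\cdot Z_s^i\sigma\,\mathrm{d}W_s^{a^\star(q,Z)}$ (which the paper uses silently, and which indeed rests on the integrability built into the class $\Zc$) and in carrying out the scalar optimisation over $y$; you are also right that the $r$ in the statement is a typo for $\rho$.
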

The importance of this result is the reduction of the state variables in the reformulated problem. Indeed, \Cref{prop:value-function-2} presents a new problem where the only state variable is the process $L$, which results in a one-dimensional drift-control problem, which is much easier to deal with, compared to the original problem with two state variables, degenerate diffusion coefficient, and controlled volatility.

\medskip
The Hamilton--Jacobi--Bellman partial differential equation associated to the reformulated problem of the ISO \eqref{eq:V0-one-dimensional} is the following
\begin{equation}\label{eq:pde-value-function}
v_t +  G(\ell, v_\ell, v_{\ell\ell}) = 0, \; (t,\ell)\in [0,T) \times \R,\;
 v(T,\ell) =  0,\; \ell\in\R,
\end{equation}
with $G:\R \times \R \times \R \longrightarrow \R$ given by
\begin{equation*} 
 G(\ell,\alpha,\gamma)  := \inf_{(z,q,\phi)\in\R^{N}\times \hat{P} }  \big\{g(\alpha,z,q,\phi)\big\}   + \frac12\gamma\sigma^2  + \Lambda(\ell - \ell_0),\; (\ell,\alpha,\gamma)\in\R^3,
\end{equation*}
and $g:\R\times\R^N\times \hat{P}\longrightarrow \R$ defined by
\[
g(\alpha,z,q,\phi) := \sum_{i=1}^N  \bigg(  \alpha \big(1-a^{\star,i}(q,z)\big)p_i(q^i) +  h_i\big(a^{\star,i}(q,z)\big)+\frac{\rho\sigma^2}{2} (z^i)^2+2c_i(q^i) \bigg),\; (\alpha,z,q,\phi)\in\R\times\R^N\times \hat{P}.
\]
This PDE is well-behaved, in the sense that it possesses a unique viscosity solution, which turns out to be smooth. We have the main result of this section.
\begin{theorem}\label{thr-iso-main} $(i)$ The value function of problem \eqref{eq:V0-one-dimensional} is given by  $\hat V_0=v(0,L_0)$, where $v$ is the unique viscosity solution to the {\rm HJB} equation \eqref{eq:pde-value-function} with polynomial growth at infinity. Moreover, $v$ is continuously differentiable in the space variable.

\medskip
$(ii)$ The optimal controls for problem \eqref{eq:V0-one-dimensional} are given by $Z^\star_s:= z^\star(v_\ell(s,L_s))$, $\phi^\star_s:= \phi^\star(v_\ell(s,L_s))$, $q^\star_s:= q^\star(v_\ell(s,L_s))$, where $(z^\star(\alpha), \phi^\star(\alpha), q^\star(\alpha))$ is any measurable selection of minimisers of $g(\alpha,\cdot)$ $($see {\rm\Cref{lemma-optimizers}}$)$. 

\medskip
$(iii)$ Let $y^i:= -\frac{1}{\rho}\log(-R_0^i)$, $i\in\{1,\dots,N\}$. The optimal contract in {\rm Problem \eqref{eq:ISO-problem}} is given by $(q^\star,\phi^\star,\xi^\star)$, with the remunerations 
\begin{equation}\label{eq:optimal-contract}
\xi^\star:= y + \int_0^T f(q_s^\star,Z_s^\star) \mathrm{d}s + \int_0^T Z_s^\star \mathrm{d}L_s,
\end{equation}
with the function $f:\R_+^N\times\R^N\longrightarrow\R^N$ given by \eqref{eq:generator}.
\end{theorem}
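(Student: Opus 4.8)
The plan is to treat \eqref{eq:V0-one-dimensional} as a genuine stochastic control problem parametrised by the initial time and pollution level: I would set $v(t,\ell)$ to be the value obtained when $L$ starts from $\ell$ at time $t$, so that $\hat V_0=v(0,L_0)$. The decisive structural feature, inherited from the reduction in \Cref{prop:value-function-2}, is that $L$ now carries the \emph{constant} volatility $\sigma>0$ while $Z$ enters only through the drift and running cost; hence the operator in \eqref{eq:pde-value-function} is uniformly parabolic and, after performing the inner infimum, the equation takes the semilinear form
\[
v_t+\tfrac12\sigma^2 v_{\ell\ell}+H(v_\ell)+\Lambda(\ell-\ell_0)=0,\quad H(\alpha):=\inf_{(z,q,\phi)\in\R^N\times\hat P}g(\alpha,z,q,\phi).
\]
This places the problem inside classical semilinear parabolic theory, rather than the fully nonlinear, degenerate regime of the original formulation.

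First I would settle the inner minimisation, which is the content of \Cref{lemma-optimizers}. For fixed $(z,q,\phi)$ the map $\alpha\mapsto g(\alpha,z,q,\phi)$ is affine, since the equilibrium efforts $a^{\star,i}(q,z)$ depend only on $(q,z)$ and not on $\alpha$; moreover its slope $\sum_{i=1}^N(1-a^{\star,i}(q,z))p_i(q^i)$ is uniformly bounded because $\hat P$ is compact and each $a^{\star,i}\in[0,1]$. Thus $H$ is an infimum of affine functions with uniformly bounded slopes, hence concave and globally Lipschitz. Existence and measurability of a minimiser $(z^\star(\alpha),q^\star(\alpha),\phi^\star(\alpha))$ then follow from compactness of $\hat P$, the coercivity of $g$ in $z$ furnished by the term $\tfrac{\rho\sigma^2}{2}\sum_i(z^i)^2$, and a measurable selection theorem.

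Next I would establish that $v$ is a viscosity solution of \eqref{eq:pde-value-function} with polynomial growth via the dynamic programming principle, and that it is the \emph{unique} such solution via a comparison principle in the polynomial growth class (exploiting the growth assumed on $\Lambda$). The main obstacle, and the heart of part $(i)$, is upgrading this viscosity solution to a classical, space-$C^1$ solution. Here I would exploit the semilinear, uniformly parabolic structure: treating $H(v_\ell)+\Lambda(\ell-\ell_0)$ as a source term, the equation is a semilinear heat equation whose nonlinearity is globally Lipschitz in $v_\ell$. I would run a Schauder bootstrap—or invoke the probabilistic regularity afforded by the associated BSDE representation—working in weighted Hölder spaces to accommodate the polynomial growth, obtaining interior $C^{1,2}$ regularity; the continuous differentiability in space claimed in the statement is then immediate.

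Finally, with $v$ smooth enough to apply It\^o's formula, part $(ii)$ is a standard verification argument: inserting the feedback controls $Z^\star_s=z^\star(v_\ell(s,L_s))$, $q^\star_s=q^\star(v_\ell(s,L_s))$, $\phi^\star_s=\phi^\star(v_\ell(s,L_s))$ turns the Hamiltonian inequality into an equality along the optimal trajectory, so these controls attain $\hat V_0$. For part $(iii)$ I would return to the original contracting problem through \Cref{th:main}: the optimal $(q^\star,Z^\star)$ determine the remuneration $\xi^\star=Y_T^{y,q^\star,Z^\star}$, which is precisely \eqref{eq:optimal-contract}, while the explicit, monotone dependence on $y$ recorded in \Cref{prop:value-function-2} forces the participation constraints to bind, yielding $y^i=-\tfrac1\rho\log(-R_0^i)$.
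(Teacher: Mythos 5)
Your proposal is correct and its overall architecture coincides with the paper's: define the dynamic value function $V(t,\ell)$, show it is a viscosity solution with polynomial growth (the paper cites \cite[Propositions 4.3.1 and 4.3.2]{pham2009continuous}), identify it with the unique such solution, extract optimal feedback controls from a measurable selection of minimisers (your affine-in-$\alpha$/bounded-slope observation for $g$ is exactly the Lipschitz estimate the paper derives for its rescaled Hamiltonian $\tilde F$, and your coercivity-in-$z$ remark is how the paper compactifies the $z$-optimisation in \Cref{lemma-optimizers}), and finally translate back through \Cref{th:main} with the participation constraints binding, which pins down $y^i=-\tfrac{1}{\rho}\log(-R_0^i)$. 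The genuine divergence is in the regularity upgrade, which is the technical heart of part $(i)$. The paper does \emph{not} run a Schauder bootstrap: it proves, via the envelope theorem of Milgrom--Segal, that $\tilde F$ is continuously differentiable in $\alpha$ with bounded derivative, and then invokes the FBSDE representation theorem of Ma--Zhang \cite[Theorem 3.1 and Corollary 3.2]{ma2002representation} to produce a viscosity solution that is automatically $C^1$ in space with bounded derivative and polynomial growth; uniqueness comes from \cite[Theorem 6.106]{pardoux2014stochastic}. Your PDE-analytic route (semilinear, uniformly parabolic, Lipschitz concave Hamiltonian $H$, bootstrap to interior $C^{1,2}$) is viable and in fact buys more: it does not require differentiability of $H$, and the resulting classical regularity makes your It\^o-based verification of part $(ii)$ cleaner than the paper's rather terse justification (the paper only argues that the feedback controls attain the Hamiltonian and that $Z^\star$ is bounded, hence admissible). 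What the paper's probabilistic route buys in exchange is that the bounded space-derivative and the growth control come for free from BSDE estimates, whereas your bootstrap needs care at its entry point: from a viscosity solution one must first obtain a local H\"older gradient estimate so that $H(v_\ell)$ is a legitimate H\"older source term before Schauder theory applies---standard for uniformly parabolic equations in one space dimension, but it is the step your outline glosses over, and it is precisely the step the paper outsources to \cite{ma2002representation}.
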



\section{A simpler problem for the ISO}

In this section we discuss a different problem for the ISO, in which the production and transmission plans are fixed throughout the life of the contract. If the ISO is not able or willing to dynamically update the values of the production and transmission, the regulation problem will be slightly different and mathematically simpler. Namely, the decision over the controls $(q,\phi)\in\Pc$ will become a choice over elements $(q,\phi)\in\hat P$. 

\subsection{The new problem}

We keep the same notations from the previous section. We also enforce the same assumptions over the the functions in our model (see \Cref{ap:model}). We study a sub-problem of \eqref{eq:ISO-problem} in which the ISO is restricted to choose constant controls from $\Pc$. What motivates this problem is the fact that in real life, the ISO may not want to constantly update production and transmission due to the operational costs involved. 

\medskip
We introduce the following notation. For $(q,\phi)\in\hat{P}$, we denote by $(q^d,\phi^d)$ the deterministic and constant processes defined by $q^d_t(\omega) \equiv q$, $\phi^d_t(\omega) \equiv \phi$. We study then, the following regulation problem for the ISO
\begin{equation}\label{eq:ISO-sec3}
V^d:= \inf_{(q,\phi)\in\hat P} v^d(q,\phi),
\end{equation}
where the problem with fixed plan $(q,\phi)\in\hat{P}$ is given by 
\begin{equation}\label{eq:ISO-problem-sec3}
v^d(q,\phi) :=  \inf_{\xi\in\Cc(q,\phi)}  \E^{\P^{a^\star(q^{\text{\fontsize{4}{4}\selectfont $d$}},\xi)}} \bigg[ \sum_{i=1}^N   c_i(q^i) T + \int_0^T \Lambda(L_s - \ell_o) \mathrm{d}s +  \sum_{i=1}^N  \xi^i \bigg],
\end{equation}
with the set of remunerations
\[
\Cc(q,\phi) := \Big\{ \xi\in\Rc:  \text{NE}(q^d,\phi^d,\xi)=\{ a^\star\},\;  \text{with}\; V_0(a^{\star,-i},\xi^i,q^d) \geq R_0^i, \;\forall i\in\{1,\dots,N\} \Big \}.
\]
Let us mention that the game played by the producers does not change at all in this new problem, since their actions keep being taken for given production and transmission plans. Therefore, we can prove in an identical way to the proof of \Cref{th:main}, the following equivalence for the set of remunerations
\[
\Cc(q,\phi) = \big\{ Y_T^{y,q,Z}: (Z,y)\in\Zc\times\R^n,\; \text{with}\; \Uc_A(y^i)\geq R_0^i,\; \forall i\in\{1,\dots,N\} \big\},
\]
where $\Zc$ is the class of processes defined in \eqref{eq:classZ}. We can reformulate then, the problem with fixed plan \eqref{eq:ISO-problem-sec3} as
\begin{equation}\label{eq:V0-first-sec3}
v^d(q,\phi)  = \inf_{(y,Z)\in\hat\Cc(q,\phi)} \E^{\P^{a^\star(q^{\text{\fontsize{4}{4}\selectfont $d$}},\xi)}} \bigg[ \sum_{i=1}^N   c_i(q^i) T + \int_0^T \Lambda(L_s - \ell_0) \mathrm{d}s +  \sum_{i=1}^N  Y_T^{y,q,Z,i} \bigg],
\end{equation}
with the dynamics of the controlled processes given by
\begin{align*}
L_t &= L_0 + \int_0^t \sum_{i=1}^N \big(1-a^{\star,i}(q^i,Z_s^i)\big)p_i(q^i)  \mathrm{d}s + \sigma \int_0^t \mathrm{d}W^{a^\star(q^{\text{\fontsize{4}{4}\selectfont $d$}},\xi)}_s, \; t\in[0,T],\\
Y_t^{y,q,Z,i} &  = y^i + \int_0^t \bigg(  h_i\big(a^{\star,i}(q^i,Z_s^i)\big) + c_i(q^i)+ \frac{\rho\sigma^2}{2} (Z^i_s)^2 \bigg) \mathrm{d}s + \int_0^t Z^i_s \sigma \mathrm{d}W^{a^\star(q^{\text{\fontsize{4}{4}\selectfont $d$}},\xi)}_s,\; t\in[0,T],\; i\in\{1,\dots,N\},
\end{align*}
and where the set $\hat\Cc(q,\phi)$ is given by the reformulated constraints
\[
\hat \Cc(q,\phi) = \big\{ (y,Z)\in \mathbb{R}^n\times\Zc: \Uc_A(y^i) \geq R_0^i, \;\forall i\in\{1,\dots,N\}\big \}.
\]
It is straightforward, as in \Cref{prop:value-function-2}, that we have
$
v^d(q,\phi)  = \hat v^d(q,\phi)  - \sum_{i=1}^N \frac{1}{r}\log(-R_0^i),
$
where $\hat v^d(q,\phi)$ is the value of the following stochastic control problem, with only the pollution as state variable
\begin{equation}\label{eq:V0-one-dimensional-sec3}
\hat v^d(q,\phi)  := \inf_{Z \in \Zc}  \E^{\P^{a^\star(q^{\text{\fontsize{4}{4}\selectfont $d$}},\xi)}} \bigg[ \sum_{i=1}^N \int_0^T  \bigg( h_i\big(a^{\star,i}(q^i,Z_s^i)\big)+\frac{\rho\sigma^2}{2} (Z_s^i)^2+ 2 c_i(q^i)  \bigg) \mathrm{d}s + \int_0^T \Lambda( L_s - \ell_0) \mathrm{d}s  \bigg] .
\end{equation}
The HJB partial differential equation associated to the reformulated problem \eqref{eq:V0-one-dimensional-sec3} is the following
\begin{equation} \label{eq:pde-value-function-sec3}
v_t +  G^{q,\phi}(\ell, v_\ell, v_{\ell\ell}) = 0, \; (t,\ell)\in [0,T) \times \R,\;
 v(T,\ell) =  0,\; \ell\in\R,
\end{equation}
with $G^{q,\phi}:\R \times \R \times \R \longrightarrow \R$ given by
\begin{equation*}
 G^{q,\phi}(\ell,\alpha,\gamma)  = \inf_{z \in\R^{N} }  \big\{g^{q,\phi}(\alpha,z)\big\}   + \frac12\gamma\sigma^2  + \Lambda(\ell - \ell_0),
\end{equation*}
and $g^{q,\phi}:\R\times\R^N\longrightarrow \R$ defined by
\[
g^{q,\phi}(\alpha,z) = \sum_{i=1}^N  \bigg(  \alpha  \big(1-a^{\star,i}(q,z)\big)p_i(q^i) +  h_i(a^{\star,i}(q,z))+\frac{\rho\sigma^2}{2} (z^i)^2+2c_i(q^i) \bigg) .
\]
Finally, we can mimic \Cref{thr-iso-main} and obtain the analogous result in this new setting. 
\begin{theorem}\label{thr-iso-main-sec3} $(i)$ The value function of problem \eqref{eq:V0-one-dimensional-sec3} is given by  $\hat v^d(q,\phi)=v^{q,\phi}(0,L_0)$, where $v^{q,\phi}$ is the unique viscosity solution to the {\rm HJB} equation \eqref{eq:pde-value-function-sec3} with polynomial growth at infinity. Moreover, $v^{q,\phi}$ is continuously differentiable in the space variable.

\medskip
$(ii)$ The optimal control for problem \eqref{eq:V0-one-dimensional-sec3} is given by $Z^\star_s:= z^\star(v^{q,\phi}_\ell(s,L_s))$, where $z^\star(\alpha)$ is any measurable selection of minimisers of $g^{q,\phi}(\alpha,\cdot)$. 

\medskip
$(iii)$ Let $y^i:= -\frac{1}{\rho}\log(-R_0^i)$, $i\in\{1,\dots,N\}$. The optimal vector of remunerations in {\rm Problem \eqref{eq:ISO-problem-sec3}} is given by 
\[
\xi^\star:= y + \int_0^T f(q,Z_s^\star) \mathrm{d}s + \int_0^T Z_s^\star \mathrm{d}L_s,
\]
with the function $f:\R_+^N\times\R^N\longrightarrow\R^N$ given by \eqref{eq:generator}.
\end{theorem}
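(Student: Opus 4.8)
The plan is to follow the proof of \Cref{thr-iso-main} almost verbatim, exploiting the fact that fixing the plan $(q,\phi)\in\hat{P}$ turns it into a constant parameter rather than a control: the only remaining control is the sensitivity process $Z\in\Zc$, and the associated HJB equation \eqref{eq:pde-value-function-sec3} is a \emph{semilinear}, uniformly parabolic equation, since the second-order coefficient $\tfrac12\sigma^2$ is constant and non-degenerate ($\sigma>0$) while the nonlinearity enters only through the first-order Hamiltonian $\alpha\mapsto\inf_{z\in\R^N}g^{q,\phi}(\alpha,z)$. This is strictly simpler than the situation in \Cref{thr-iso-main}, where one additionally minimises over the compact constraint set $\hat{P}$; here that set has collapsed to a single fixed point.

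First I would analyse the reduced Hamiltonian. For fixed $(q,\phi)$ and $\alpha\in\R$, the map $z\mapsto g^{q,\phi}(\alpha,z)$ decouples across $i\in\{1,\dots,N\}$, since each $z^i$ enters only the $i$-th summand. Within each summand, the quadratic penalty $\tfrac{\rho\sigma^2}{2}(z^i)^2$ dominates as $|z^i|\to\infty$, whereas the $a$-dependent terms $\alpha(1-a^{\star,i}(q,z))p_i(q^i)+h_i(a^{\star,i}(q,z))$ remain bounded because $A_i\subset[0,1]$ is compact and $q$ is fixed; hence $g^{q,\phi}(\alpha,\cdot)$ is coercive and its infimum is attained. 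I would then record that a measurable selection $z^\star(\alpha)$ of minimisers exists --- this is the exact analogue of \Cref{lemma-optimizers}, whose argument (Berge's maximum theorem together with a measurable-selection theorem) applies \emph{a fortiori} once the minimisation over $\hat{P}$ is dropped --- and that, being an infimum of functions affine in $\alpha$ with uniformly bounded slopes, $\alpha\mapsto\inf_{z}g^{q,\phi}(\alpha,z)$ is concave and globally Lipschitz.

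Next I would establish existence, uniqueness and regularity of the solution to \eqref{eq:pde-value-function-sec3}. Uniqueness within the class of viscosity solutions of polynomial growth follows from the comparison principle for uniformly parabolic semilinear equations, using polynomially growing test functions to control the source term $\Lambda(\ell-\ell_0)$; existence follows either from Perron's method or by identifying the candidate solution with the value function $\hat{v}^d(q,\phi)$. Since the equation is uniformly parabolic with a Lipschitz nonlinearity, interior parabolic (Schauder-type) estimates upgrade the viscosity solution to a classical $C^{1,2}$ solution, and in particular $v^{q,\phi}$ is continuously differentiable in $\ell$, which proves $(i)$.

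Finally, a verification argument yields $(ii)$ and $(iii)$. Applying Itô's formula to $v^{q,\phi}(t,L_t)$ along an arbitrary admissible $Z\in\Zc$ and invoking the HJB inequality gives $\hat{v}^d(q,\phi)\ge v^{q,\phi}(0,L_0)$; substituting the feedback $Z^\star_s:=z^\star(v^{q,\phi}_\ell(s,L_s))$ turns this into an equality, once one checks that the closed-loop pollution SDE is well-posed and that $Z^\star\in\Zc$ with the exponential integrability required for the change of measure $\P^{a^\star(q^d,\xi)}$. Part $(iii)$ then follows directly from the representation of $\Cc(q,\phi)$ stated just above the theorem, exactly as in \Cref{th:main}. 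The main obstacle is the regularity-plus-admissibility step: ensuring the unique viscosity solution is genuinely $C^1$ in $\ell$ so that the feedback $z^\star(v^{q,\phi}_\ell)$ is well-defined, and that the resulting $Z^\star$ lies in $\Zc$ with the moments needed to render the optimal remuneration integrable. Everything else is inherited, essentially unchanged, from the proof of \Cref{thr-iso-main}.
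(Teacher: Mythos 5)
Your proposal reproduces the architecture the paper intends (the authors state explicitly that the proof ``mimics'' \Cref{thr-iso-main}: viscosity uniqueness for solutions of polynomial growth, identification of the value function with that solution, a measurable selection of Hamiltonian minimisers, verification, and then part $(iii)$ from the representation of $\Cc(q,\phi)$), but you substitute a genuinely different mechanism for the key existence-and-regularity step. The paper proves existence, polynomial growth and $C^1$-regularity in $\ell$ \emph{probabilistically}, by representing the solution through the FBSDE of \cite[Theorem 3.1 and Corollary 3.2]{ma2002representation} --- this is also how $\hat v^d(q,\phi)$ is handled in the proof of \Cref{prop-continuity-sec3} --- which requires first showing, via the envelope theorem \cite[Theorem 2]{milgrom2002envelope}, that $\alpha\mapsto\inf_z g^{q,\phi}(\alpha,z)$ is $C^1$ with bounded derivative. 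You instead argue analytically, using uniform parabolicity and the global Lipschitz/concavity of the Hamiltonian in $\alpha$ to upgrade the viscosity solution to a classical $C^{1,2}$ solution by Krylov--Safonov/Schauder bootstrapping; this is viable in the present one-dimensional non-degenerate setting, yields stronger interior regularity, and avoids the envelope-theorem step. Where your route pays a price is exactly the point you flag but do not close: interior Schauder estimates give only \emph{local} bounds on $v^{q,\phi}_\ell$, whereas the paper's FBSDE route delivers a \emph{global} bound on $v^{q,\phi}_\ell$ (Corollary 3.2 of \cite{ma2002representation}), and this global bound is what makes $Z^\star_s=z^\star\big(v^{q,\phi}_\ell(s,L_s)\big)$ bounded --- via the compact reduction in the analogue of \Cref{lemma-optimizers} --- hence an element of $\Zc$, so that the Girsanov change of measure and the verification argument go through. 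To complete your version you would need a global gradient estimate (e.g.\ a Bernstein-type argument or comparison applied to spatial translates). Two minor remarks: the closed-loop ``SDE well-posedness'' you worry about is a non-issue in the weak formulation, since the feedback is merely a bounded predictable functional of $L$ entering a Girsanov density; and your It\^o-based verification is simply a fleshed-out version of the paper's terse claim that controls attaining the infimum in the Hamiltonian are optimal.
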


To conclude this section, let us comeback to the main problem $V^d$, which can be approached by standard optimisation techniques. The next result establishes the existence of a solution to $V^d$, its proof can be found in Appendix \ref{ap:example}.

\begin{proposition}\label{prop-continuity-sec3}
The function $v^d:\hat P \longrightarrow \R$ is continuous. There exists a deterministic plan $(q^\star,\phi^\star)\in\hat P$ which minimises  \eqref{eq:ISO-sec3}.
\end{proposition}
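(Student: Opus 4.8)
The plan is to show that $v^d$ is continuous on $\hat P$ and that $\hat P$ is compact, after which the existence of a minimiser of \eqref{eq:ISO-sec3} follows at once from the extreme value theorem.

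First I would record that $\hat P$ is compact: it is bounded because every coordinate is confined to $q^i \in [0,Q^i]$ and $\phi^e \in [\underline\phi^e, \overline\phi^e]$, and it is closed because it is the intersection of these boxes with the preimage of the point $(D_i)_{i}$ under the continuous balance map $(q,\phi) \mapsto (q^i + \sum_{e\in K_i}(\phi^e\mathrm{sgn}(e,i) - \tfrac{r_e}{2}(\phi^e)^2))_i$. By Theorem \ref{thr-iso-main-sec3}$(i)$ together with the identity $v^d(q,\phi) = \hat v^d(q,\phi) - \sum_i \tfrac1r \log(-R_0^i)$, it then suffices to prove that $(q,\phi) \mapsto v^{q,\phi}(0,L_0)$ is continuous. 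I would first note that the running cost, the generator $g^{q,\phi}$ and the dynamics of $L$ in \eqref{eq:V0-one-dimensional-sec3} depend on $(q,\phi)$ through $q$ only, so that $v^{q,\phi}$ is independent of $\phi$ and only continuity in $q$ (over the compact projection of $\hat P$) needs to be shown.

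The core step would be a stability argument for viscosity solutions. I would first check that the Hamiltonian is continuous in the parameter: the selector $(q^i,z^i)\mapsto a^{\star,i}(q^i,z^i)$ is continuous by the hypotheses on $h_i$ and $p_i$ in \Cref{ap:model} and takes values in $A_i \subset [0,1]$, so $(q,\alpha,z)\mapsto g^{q,\phi}(\alpha,z)$ is jointly continuous; moreover the coercive term $\tfrac{\rho\sigma^2}{2}|z|^2$ together with the uniform bound on $a^{\star,i}p_i(q^i)$ and $h_i(a^{\star,i})$ over the compact set $\hat P$ confines the minimisers in $z$ to a ball that is locally uniform in $(q,\alpha)$, whence $\inf_z g^{q,\phi}(\alpha,z)$, and so $G^{q,\phi}$, is continuous. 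Then, for $q_n \to q$ and $v_n := v^{q_n,\phi_n}$, I would invoke polynomial-growth bounds on $v_n$ that are uniform in $n$ --- extracted from the a priori estimates underlying Theorem \ref{thr-iso-main-sec3} --- to guarantee that the relaxed semi-limits $\bar v := \limsup^* v_n$ and $\underline v := \liminf_* v_n$ are finite and of polynomial growth. The classical stability theorem then makes $\bar v$ a subsolution and $\underline v$ a supersolution of \eqref{eq:pde-value-function-sec3} for the limiting $q$, and the comparison principle in the polynomial-growth class (the same one yielding uniqueness in Theorem \ref{thr-iso-main-sec3}$(i)$) forces $\bar v \le \underline v$. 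Since $\underline v \le \bar v$ trivially, $v_n \to v^{q,\phi}$ locally uniformly, and in particular $v_n(0,L_0) \to v^{q,\phi}(0,L_0)$, giving the continuity of $v^d$ and hence the minimiser.

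The hard part will be securing the polynomial-growth estimates on $v_n$ that are uniform over the parameters, so that the comparison principle may be applied between $\bar v$ and $\underline v$; this uniformity must be read off from the constants in the proof of Theorem \ref{thr-iso-main-sec3}, exploiting that $\hat P$ is compact. As an alternative that sidesteps the PDE, I could argue directly on the control representation $v^d = \hat v^d - \sum_i \tfrac1r\log(-R_0^i)$ with $\hat v^d(q) = \inf_Z J(q,Z)$, where $J(q,Z)$ is the expectation in \eqref{eq:V0-one-dimensional-sec3}: since $a^{\star,i}\in[0,1]$, the drift $\sum_i (1-a^{\star,i}(q^i,Z^i))p_i(q^i)$ of $L$ is bounded uniformly in $Z$, so Girsanov controls the change of law of $L$ between $q$ and $q'$, and the coercive term restricts the relevant controls to a bounded set; the delicate point in this route is the uniformity in $Z$ of the modulus of continuity of $q\mapsto a^{\star,i}(q^i,Z^i)$, which again relies on the regularity assumed on $h_i$ and $p_i$.
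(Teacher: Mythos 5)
Your proposal is correct, but it establishes the continuity of $v^d$ by a genuinely different mechanism than the paper. The paper stays at the BSDE level: it writes $\hat v^d(q,\phi)=Y^{q,\phi,0,L_0}_0$, where $(Y^{q,\phi,0,L_0},Z^{q,\phi,0,L_0})$ solves the BSDE with generator $\tilde F^{q,\phi}(\ell,\alpha)=\inf_{z\in\R^N}\{\cdots\}+\Lambda(\ell-\ell_0)$, checks that $\tilde F^{q,\phi}$ is Lipschitz in $\alpha$ uniformly in $(q,\phi)\in\hat P$, and that $\tilde F^{q_1,\phi_1}(\ell,\alpha)\longrightarrow \tilde F^{q_2,\phi_2}(\ell,\alpha)$ as $(q_1,\phi_1)\to(q_2,\phi_2)$ (via the same compactification of the $z$-domain you use, plus uniform continuity of $a^{\star,i}$, $p_i$, $h_i$, $c_i$), and then invokes the continuous-dependence estimate for Lipschitz BSDEs, \cite[Proposition 2.4]{el1997backward}, to conclude that $\hat v^d$ is continuous; existence of a minimiser then follows from compactness of $\hat P$, exactly as you argue. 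Your main route instead works at the PDE level through \Cref{thr-iso-main-sec3}$(i)$, relaxed semi-limits, and comparison. Both arguments hinge on the same key ingredient---continuity of the parametrised Hamiltonian/generator, obtained by confining the $z$-optimisation to a compact set (\Cref{lemma-optimizers}) together with continuity of $a^\star$ (\Cref{lemma:a-star})---but the paper's version buys economy: the BSDE stability estimate needs only pointwise convergence of the generators (dominated along the limiting solution) and no comparison machinery beyond the Lipschitz property. Yours buys a slightly stronger conclusion (locally uniform convergence $v^{q_n,\phi_n}\to v^{q,\phi}$, not just convergence at $(0,L_0)$), at the cost of two technical layers you should make explicit: $(a)$ the comparison principle you borrow from the uniqueness statement must hold for \emph{semicontinuous} sub/supersolutions of polynomial growth, since the relaxed semi-limits are not continuous a priori; and $(b)$ the semi-limits must be shown to attain the terminal condition $v(T,\cdot)=0$, which follows from the bound $0\le v^{q,\phi}(t,\ell)\le C(T-t)\big(1+(\ell-\ell_0)^2\big)$ with $C$ uniform over $\hat P$ (by \eqref{eq:constants} and the quadratic growth of $\Lambda$). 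Both points are standard and are delivered by the uniform estimates you invoke, so your proof goes through; your sketched Girsanov-based alternative is also viable but, as you note, would require redoing the modulus-of-continuity analysis uniformly in $Z$, which the BSDE and PDE routes package for free.
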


Finally, if the map $v^d$ is continuously differentiable, since $\hat P$ satisfies the linear independence qualification constraint, the optimisation in \eqref{eq:ISO-sec3} can be performed by solving the corresponding Karush--Kuhn--Tucker optimality conditions. Such smoothness for $v^d$ can be obtained by strengthening our assumption, for instance if the maps $p_i$, $c_i$, $h_i$ are continuously differentiable, and so is the function $a^\star$ with respect to $q$, then \cite[Proposition 2.4]{el1997backward} can be used to prove that $v^d$ will be continuously differentiable.

\subsection{An example with explicit remunerations} \label{sec-example}

We present in this subsection an example in which the optimal remunerations for the providers in the class $\hat \Cc(q,\phi)$ can be found explicitly. We assume that each provider has only one available technology and therefore the polluting functions can be written as linear functions $p_i(q)=p_i q$, for some constants $p_i>0$, $i\in\{1,\dots,N\}$. 

\medskip
By abusing the notations, we let the set of actions be $A_i=[0,A_i]$ for some $A_i\in(0,1)$. Under these assumptions, for any $(q,\phi)\in\hat P$ the problem $v^d(q,\phi)$ of the ISO can be solved explicitly and we have  the following result, whose proof can be found in Appendix \ref{ap:example}.

\begin{proposition}\label{prop:explicit-example}
For any $i\in\{1,\dots,N\}$, let $\Lambda(x)=\lambda x$, $h_i(a)=\frac{h_i}{2}a^2$, $p_i(q)=p_i q$ with positive constants $\lambda$, $h_i$, and $p_i$. For $(q,\phi)\in\hat P$, suppose that for any $i\in\{1,\dots,N\}$, the constants 
\[
M_i:=\frac{A_i h_i(\sigma^2 \rho h_i + (p_i q_i)^2)}{ (p_i q_i)^3},\; i\in\{1,\dots,N\},
\]
satisfy $M_i \geq \lambda T$. Then the optimal control in problem \eqref{eq:V0-one-dimensional-sec3} is given by
\[
Z_s^\star =  z_i (s-T), \; s\in[0,T],
\]
where  $z_i :=   \lambda (p_i q_i)^2/\big(\sigma^2 \rho h_i + (p_i q_i)^2\big)$. The optimal remunerations in {\rm Problem \eqref{eq:ISO-problem-sec3}} and the corresponding Nash equilibrium of the producers are given by
\begin{align*}
 \xi^{\star,i} := y^i +  \int_0^T f^i(q,Z_s^\star) \mathrm{d}s +  z_i \int_0^T (s-T) \mathrm{d}L_s, \; a^{\star,i}_s= \frac{(T-s) z_i p_i q_i}{h_i }, \; s\in[0,T],\; i\in\{1,\dots,N\}.
\end{align*}
\end{proposition}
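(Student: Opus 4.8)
The plan is to apply \Cref{thr-iso-main-sec3}, which reduces the problem to identifying the spatial derivative $v^{q,\phi}_\ell$ of the value function together with the minimiser $z^\star(\alpha)$ of $g^{q,\phi}(\alpha,\cdot)$. First I would make the Nash equilibrium explicit. With $h_i(a)=\frac{h_i}{2}a^2$ and $p_i(q_i)=p_iq_i$, the minimisation in \eqref{eq:a-star} defining $a^{\star,i}(q^i,z^i)$ becomes the minimisation of the strictly convex map $a\mapsto \frac{h_i}{2}a^2 - z^i(1-a)p_iq_i$ over $[0,A_i]$, whose unconstrained stationary point is $a=-z^ip_iq_i/h_i$. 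This is non-negative precisely when $z^i\le 0$, and it stays below $A_i$ when $|z^i|$ is not too large; I would record the exact threshold $z^i\ge -A_ih_i/(p_iq_i)$ under which $a^{\star,i}(q^i,z^i)=-z^ip_iq_i/h_i$ lies in the interior of $[0,A_i]$.

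Next I would solve the HJB equation \eqref{eq:pde-value-function-sec3}. Since $\Lambda(x)=\lambda x$ is affine, I would try the ansatz $v(t,\ell)=\lambda(T-t)\ell+\beta(t)$, so that $v_\ell=\lambda(T-t)$, $v_{\ell\ell}=0$, and the terms linear in $\ell$ cancel, leaving the ordinary differential equation $\beta'(t)+\inf_z g^{q,\phi}(\lambda(T-t),z)-\lambda\ell_0=0$ with $\beta(T)=0$. In the interior regime, substituting $a^{\star,i}=-z^ip_iq_i/h_i$ into $g^{q,\phi}(\alpha,\cdot)$ turns each summand into a strictly convex quadratic in $z^i$ with positive leading coefficient $\frac12\big((p_iq_i)^2/h_i+\rho\sigma^2\big)$; the first-order condition then yields the closed-form minimiser $z^{\star,i}(\alpha)=-\alpha(p_iq_i)^2/\big((p_iq_i)^2+\rho\sigma^2 h_i\big)$. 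Evaluating at $\alpha=\lambda(T-t)$ gives $z^{\star,i}(\lambda(T-t))=z_i(t-T)$ with $z_i$ as in the statement, and $\inf_z g^{q,\phi}(\lambda(T-t),\cdot)$ becomes an explicit polynomial in $t$, so that $\beta$ is obtained by direct integration.

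I would then check that this ansatz is a genuine, polynomially bounded and continuously differentiable solution of \eqref{eq:pde-value-function-sec3}, so that by the uniqueness in \Cref{thr-iso-main-sec3}$(i)$ it coincides with $v^{q,\phi}$ and $v^{q,\phi}_\ell(t,\ell)=\lambda(T-t)$. The crucial consistency check is that the unconstrained minimiser used above is in fact the global minimiser of $g^{q,\phi}(\alpha,\cdot)$ over all of $\R^N$: outside the interior band the clamped value of $a^{\star,i}$ makes each summand monotone in $z^i$, so the infimum is attained at the interior stationary point provided $z^{\star,i}(\alpha)$ stays in $[-A_ih_i/(p_iq_i),0]$. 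A short computation shows that this holds exactly when $\alpha\le M_i$, and since $\alpha=v^{q,\phi}_\ell=\lambda(T-t)\le\lambda T$, the hypothesis $M_i\ge\lambda T$ guarantees validity uniformly in $t\in[0,T]$. This is the step I expect to be the main obstacle, since it is precisely where the admissibility of the candidate and the interpretation of the constant $M_i$ are pinned down.

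Finally, \Cref{thr-iso-main-sec3}$(ii)$ gives the optimal control $Z^\star_s=z^\star(v^{q,\phi}_\ell(s,L_s))$, whose $i$-th component is $z_i(s-T)$, and \Cref{thr-iso-main-sec3}$(iii)$ produces the remuneration $\xi^{\star,i}=y^i+\int_0^T f^i(q,Z^\star_s)\,\mathrm{d}s+z_i\int_0^T(s-T)\,\mathrm{d}L_s$, with $f$ as in \eqref{eq:generator}. The associated Nash equilibrium then follows from \eqref{eq:a-star} through $a^{\star,i}_s=-Z^{\star,i}_sp_iq_i/h_i=(T-s)z_ip_iq_i/h_i$, which is exactly the claimed expression.
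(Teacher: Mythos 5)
Your proposal is correct, and it reaches the same value function and the same consistency check as the paper, but it solves the HJB equation \eqref{eq:pde-value-function-sec3} by a genuinely different and more elementary route. Like the paper, you first reduce the effort map to its interior expression $a^{\star,i}=-z^ip_iq_i/h_i$ on the band $z^i\in[-A_ih_i/(p_iq_i),0]$, observe that the clamped regimes make each summand of $g^{q,\phi}$ monotone in $z^i$ so the global minimiser lies in the band, and verify a posteriori that $\alpha=v_\ell=\lambda(T-t)\le \lambda T\le M_i$ keeps the unconstrained stationary point admissible -- this is exactly the role $M_i$ plays in the paper's proof as well. Where you diverge is the PDE step: you exploit the affineness of $\Lambda$ through the ansatz $v(t,\ell)=\lambda(T-t)\ell+\beta(t)$, which cancels the $\ell$-terms and reduces the problem to a scalar ODE for $\beta$ solved by direct integration, then invoke uniqueness from \Cref{thr-iso-main-sec3}$(i)$. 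The paper instead keeps the quadratic-in-gradient PDE, linearises it via the Cole--Hopf transform $u=\mathrm{e}^{-Cv/B}$, represents $u$ by Feynman--Kac, and evaluates the resulting expectation of an exponential of integrated Brownian motion with drift using an explicit handbook formula (Borodin--Salminen 2.1.8.3), recovering $v_\ell=\lambda(T-t)$ from $u_\ell/u$. Your route is shorter and self-contained: it needs no probabilistic representation, no boundedness check on $u_\ell/u$, and no external formula. The paper's route is less tied to the affine structure of $\Lambda$: the Cole--Hopf/Feynman--Kac machinery would still produce a (less explicit) representation for penalties such as $\Lambda(x)=\lambda x^n$ with $n>1$, where your affine ansatz breaks down -- indeed the stray exponent $n$ appearing in the paper's own computation hints that it was set up with this generality in mind. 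Both arguments conclude identically, reading off $Z^\star$, $a^\star$ and $\xi^\star$ from \Cref{thr-iso-main-sec3}$(ii)$--$(iii)$ and \eqref{eq:a-star}.
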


Let us comment the form of the optimal remunerations in this case. We see in the last term, that an increase in the pollution level is always penalised, and the penalisation is distributed among the firms according to the constants $(z_i)_{i\in\{1,\dots,N\}}$. The higher the polluting coefficient $p_i$, the higher the constant $z_i$ and the remuneration of the firm is more sensitive to the total pollution. The more production $q_i$ the firm is required to provide, the higher is the constant $z_i$ and the same result follows. On the other hand, the costlier the effort $h_i$ of reducing pollution, the less the sensitivity of the remuneration on the total pollution. We also mention that the ISO covers both the cost of production and the cost of effort of each provider, as we can see in the definition \eqref{eq:generator} of the function $f^i$.

\medskip
Concerning the Nash equilibrium of the producers, we see that efforts are decreasing in time. At the end of the contract, no effort is made by any firm. This is due to our assumption $M_i \geq \lambda T$, meaning that the length of the contract is relatively small. The closer to maturity, the more expensive it is for the ISO to encourage efforts from the producers, and these processes naturally decrease to zero. Mathematically, as we can see in the proof of \Cref{prop:explicit-example}, our assumption makes the space derivative of the value function smaller than $M_i$ and consequently the processes $Z^\star$ and $a^\star$ are decreasing. On the other hand, when the space derivative of the value function is bigger than $M_i$ the processes $Z^\star$ and $a^\star$ are constant, being in the boundary of some sub-domain (see \eqref{eq:G-explicit-example}). In the next section, we present some numerical examples with the latter feature, for which it is optimal for the ISO to encourage constant effort throughout almost the whole life of the contract.

\section{Numerical solutions}\label{sec-numerics}

In this section we present the results obtained by solving numerically the HJB equation \eqref{eq:pde-value-function}. We use the algorithm studied in \citeauthor*{bonnans2004fast} \cite{bonnans2004fast} to approximate the solution to such PDE. 

\medskip
Inspired by the Chilean electricity market\footnote{See \url{https://www.cne.cl/wp-content/uploads/2019/10/RT_Financiero_v201910.pdf} or \url{https://www.cne.cl/wp-content/uploads/2020/01/Ap\%C3\%A9ndice-II-Proyecci\%C3\%B3n-de-Demanda-El\%C3\%A9ctrica-2019-\%E2\%80\%93-2039.pdf}}, we consider a network with three nodes representing the North, South and Centre regions of the country, respectively. We set the demands at each node to $D_1=3000$ MWh, $D_2=1000$ MWh, $D_3=6000$ MWh and the production capacities $Q^1=6000$ MWh, $Q^2=2000$ MWh, $Q^3=12000$ MWh. We suppose all the electricity can be sent through the lines and we choose the resistance values in order to make the loses not go higher than 5\% of the flow. We present the network and its characteristics in Figure \ref{fig:network}.

\begin{figure}[ht!]
\begin{center}
		\includegraphics[scale=.6]{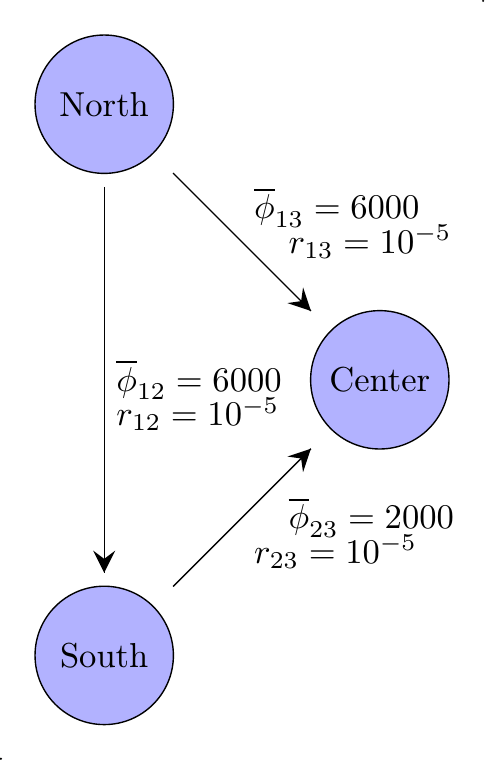} 	
\caption{Characteristics of the network.}   \label{fig:network}
\end{center}
\end{figure}

We distinguish three technologies to produce power, namely coal, gas and solar. Each technology is assumed to pollute the same and to have the same production costs, independently of the node at which it is used. The costs of producing with the coal, gas and solar technologies, in dollars per MWh, are 40, 80 and 0 respectively. The pollution emissions of the coal, gas and solar technologies, in tons of $\mathrm{CO}_2$ per MWh, are 1, 0.5 and 0 respectively. 

\begin{table}[H] \label{table-poll-cost}
\centering
\begin{tabular}{|c|c|c|}
\hline
  Technology    & \multicolumn{1}{l|}{Cost (dollars per MWh)} & \multicolumn{1}{l|}{Pollution (tons of $\mathrm{CO}_2$ per MWh)} \\ \hline
solar & 0                                           & 0                                                    \\ \hline
coal  & 40                                          & 1                                                    \\ \hline
gas   & 80                                          & 0.5                                                  \\ \hline
\end{tabular}
\caption{Cost and pollution of the technologies.}
\end{table}

 The difference between the producers is the availability of each of the technologies and the capacities per technology. To determine the cost and pollution functions of the providers, we assume they produce power with the cheapest technology available until it is saturated, time at which the cheapest of the remaining technologies is used. As mentioned in \Cref{rm-piecewise}, this results in piecewise linear functions.

\medskip
The first firm (North) produces by using the technologies (solar, coal, gas) in the proportions (0.3, 0.3, 0.4) and presents therefore the following cost function (in dollars, with $q$ in MWh) and pollution function (in tons of $\mathrm{CO}_2$, with $q$ in MWh)
\begin{align*}
c_1(q) &= \begin{cases}
0, & q\leq 1800, \\
40(q-1800), & q\in[1800,3600], \\
80(q-3600) + 72000, & q\in[3600,6000],
\end{cases}\; 
p_1(q) = \begin{cases}
0, & q\leq 1800, \\
(q-1800), & q\in[1800,3600], \\
0.5(q-3600) + 1800, & q\in[3600,6000].
\end{cases}
\end{align*}
The second firm (South) produces by using the technologies (solar, coal, gas) in the proportions (0.1, 0.4, 0.5), with cost and pollution functions
\begin{align*}
c_2(q) &= \begin{cases}
0, & q\leq 200, \\
40(q-200), & q\in[200,1000], \\
80(q-1000) + 32000, & q\in[1000,2000],
\end{cases}\;
p_2(q) = \begin{cases}
0, & q\leq 200, \\
q-200, & q\in[200,1000], \\
0.5(q-1000) +  800, & q\in[1000,2000].
\end{cases}
\end{align*}

The third firm (Center) produces by using the technologies (solar, coal, gas) in the proportions (0.2, 0.1, 0.7), with cost and pollution functions
\begin{align*}
c_3(q) &= \begin{cases}
0, & q\leq 2400, \\
40(q-2400), & q\in[2400,3600], \\
80(q-3600) + 48000, & q\in[3600,12000],
\end{cases}\;
p_3(q) = \begin{cases}
0, & q\leq 2400, \\
q-2400, & q\in[2400,3600], \\
0.5(q-3600) + 1200, & q\in[3600,12000].
\end{cases}
\end{align*}

We plot these piecewise linear functions in Figure \ref{fig:grid-cost-poll} below.
\begin{figure}[ht]
\centering
	\begin{tabular}{rl}
\includegraphics[scale=.17]{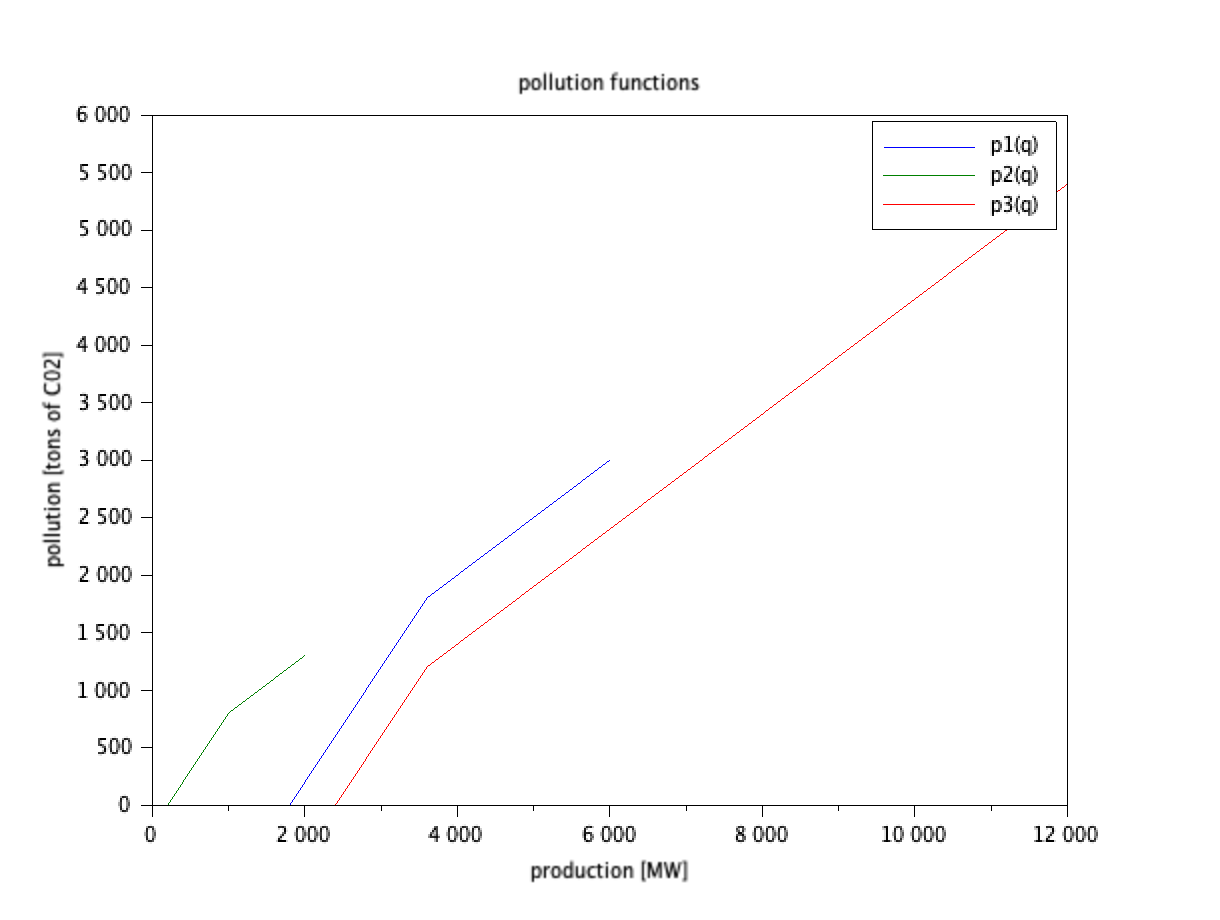}
		&
\includegraphics[scale=.17]{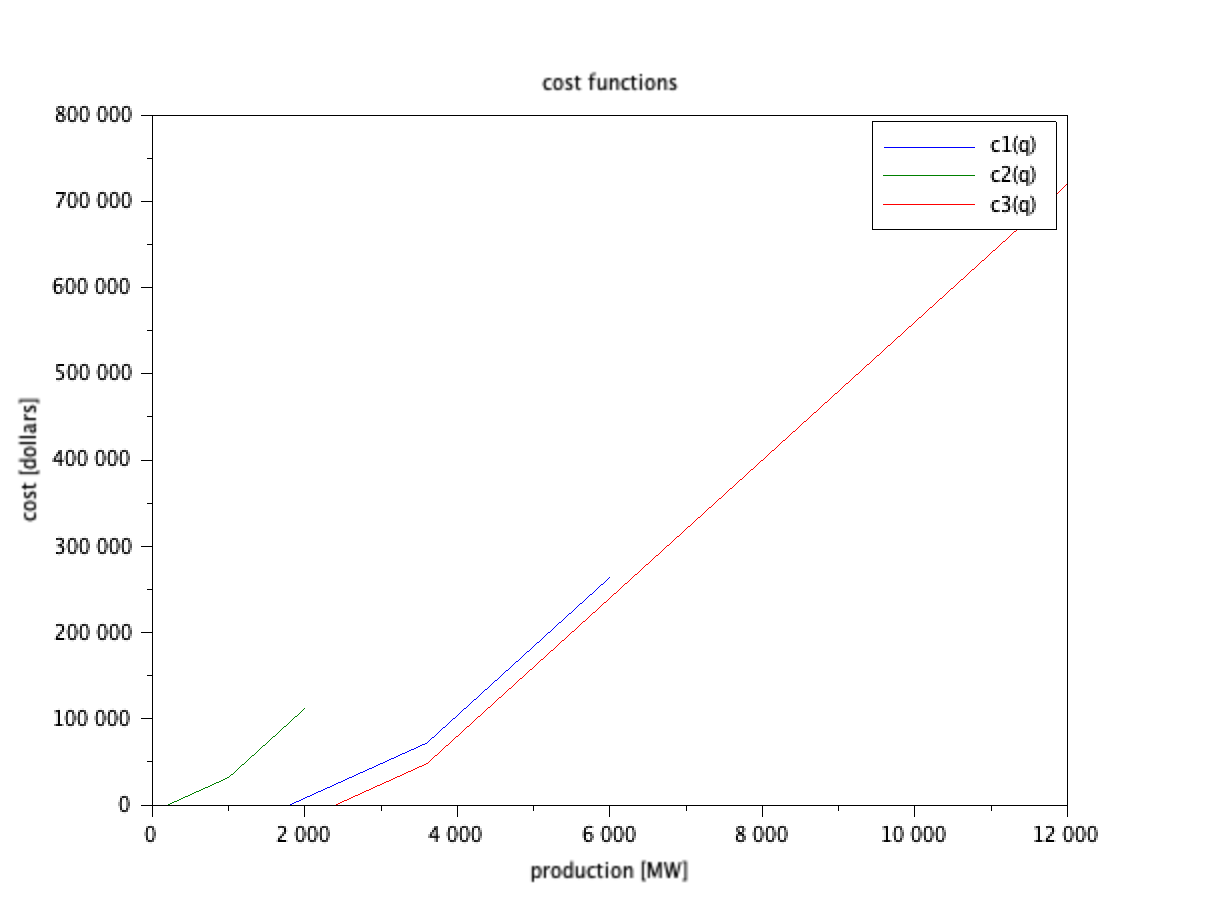}
	\end{tabular} 
\caption{Cost and pollution functions.}  \label{fig:grid-cost-poll}
\end{figure}

\subsection{No pollution regulation}

If ISO does not take into account pollution when assigning the production levels to the providers, production at the minimum cost is performed. Given the data, the production of each firm and the flow sent through each transmission line are given by
\[
q_t^1 = 3600,\; q_t^2 = 1000.4,\; q_t^3 = 5402, \; \phi^{e_{12}}_t =198 , \; \phi^{e_{13}}_t = 401 , \; \phi^{e_{23}}_t =198.
\]
 \Cref{fig:network-nopoll} shows the flows in the network in the absence of regulation.
\begin{figure}[H]
\centering
		\includegraphics[scale=.6]{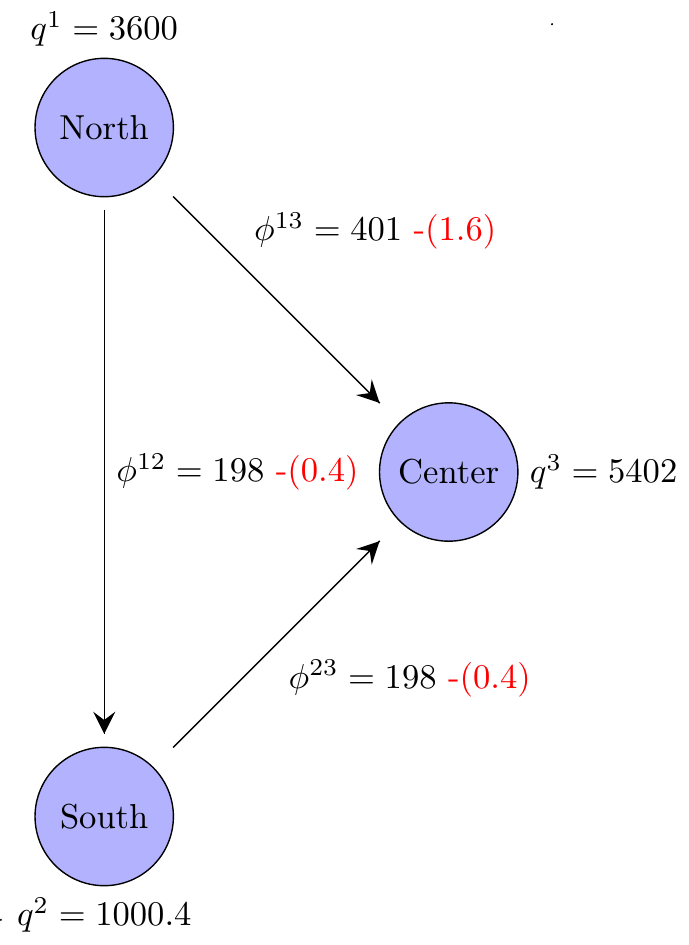} 	
\caption{Flow in the network without regulation.}  \label{fig:network-nopoll}
\end{figure}

We simulate a three months period with an initial pollution level of $L_0 = 8,000,000$ tons of ${\rm CO}_2$.  \Cref{fig:not-poll-mult} shows the pollution trajectories in this setting.
\begin{figure}[H]
\centering
	\begin{tabular}{rl}
\includegraphics[scale=.17]{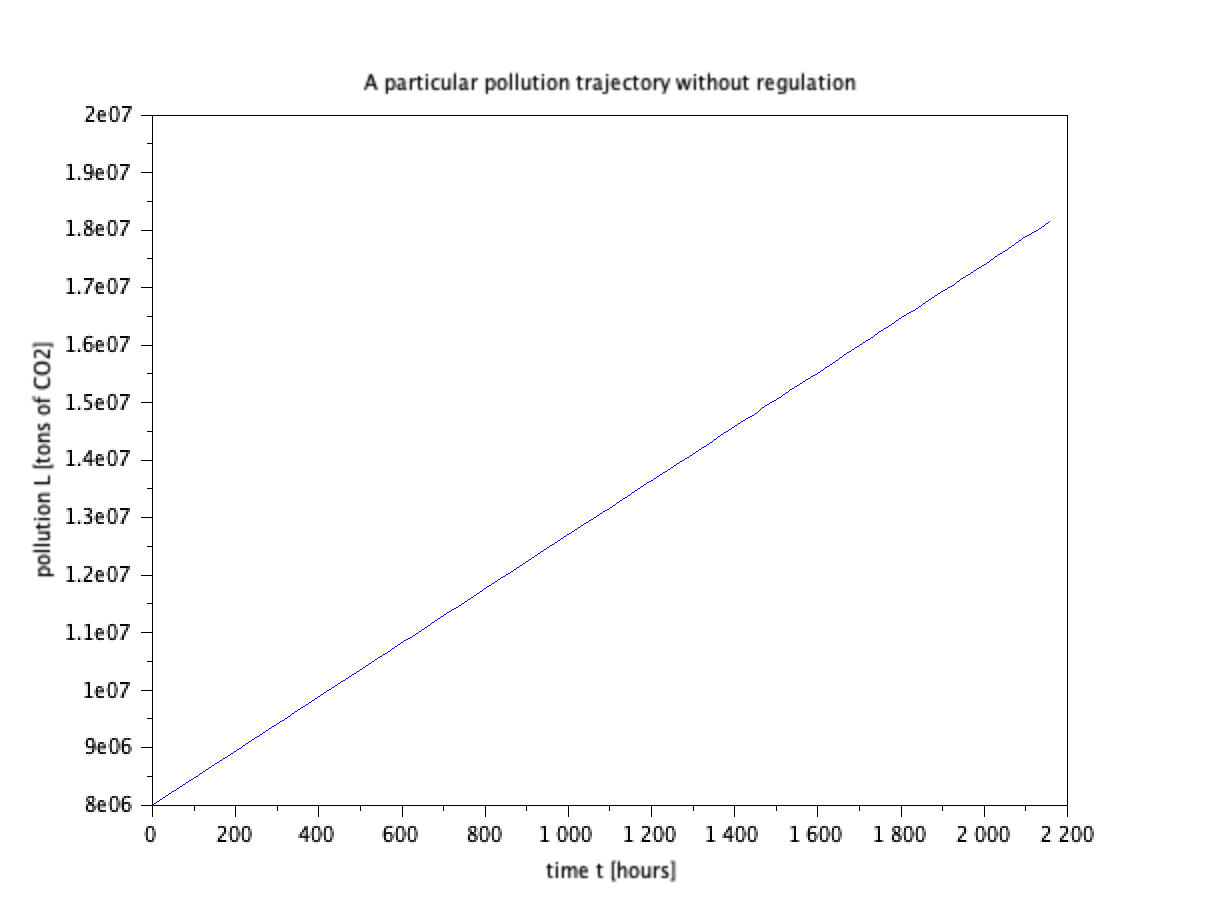}
		&
\includegraphics[scale=.17]{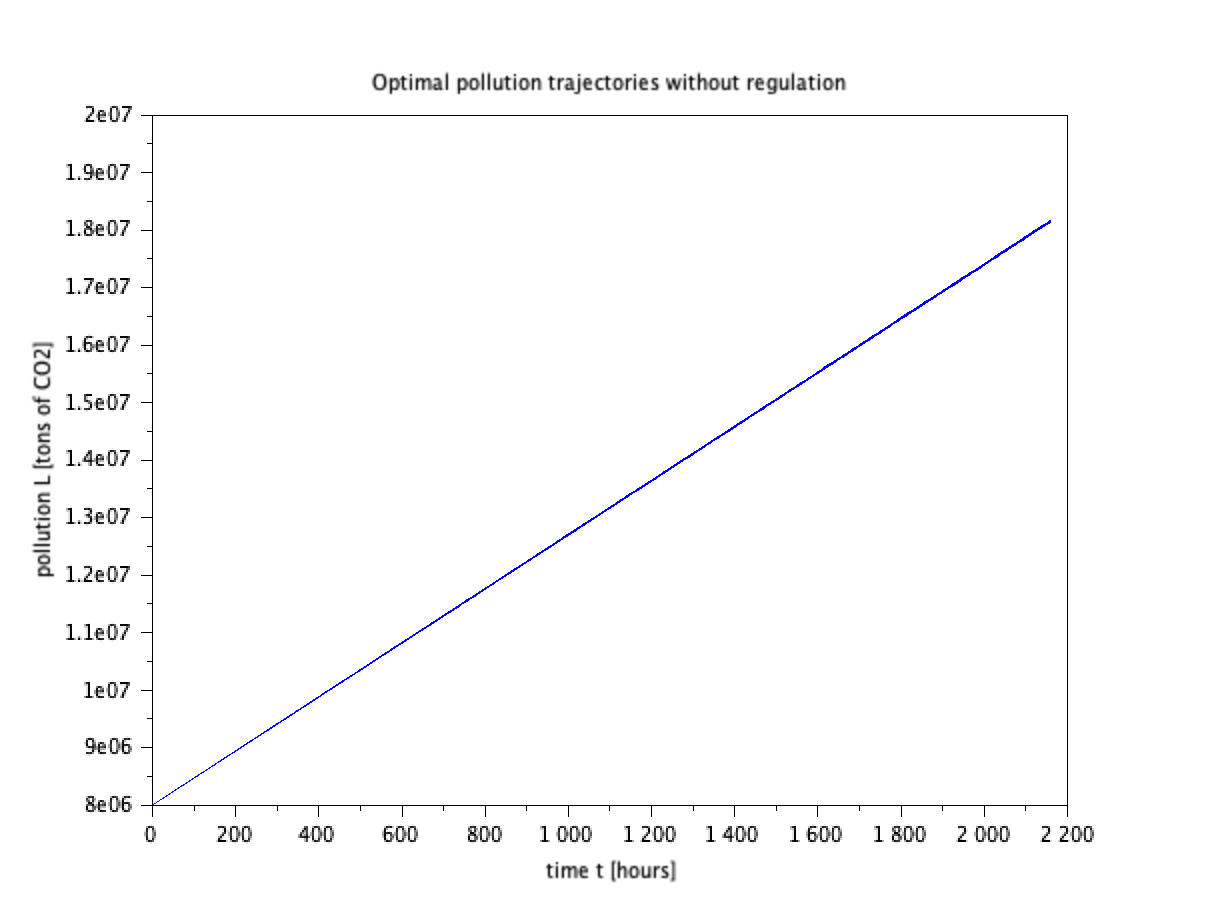}
	\end{tabular} 
\caption{Pollution without regulation.}  \label{fig:not-poll-mult}
\end{figure}

\subsection{Regulation}

With the same data as before, we consider a social cost function $\Lambda(x)=\lambda (x)_+$, with $\lambda=5$ in dollars per ton of $\mathrm{CO}_2$. We also set $\ell_0 = 10000000$ tons of $\mathrm{CO}_2$. The cost of the effort of reducing the pollution is given by the functions $h_i(a)=\frac{h_i}{2}a^2$, with $h_1=h_2=h_3=30000$ in dollars. The efforts take values in the sets $A_1=A_2=A_3=[0,0.3]$. The value function of the ISO in this case is shown in \Cref{fig:mult-vf}. 

\medskip
If we calculate the social cost of the pollution under no regulation (see \Cref{fig:not-poll-mult}), we obtain a value of $3.645 \times 10^{10}$. Under regulation we have $V_0=V(0,L_0)=1.44 \times 10^{10}$, so the social cost is reduced to less than the half. Let us mention that under no regulation the production costs are equal to $6.30 \times 10^8$. The difference between this number and the social cost under no regulation represents the compensation that a private ISO would have to receive, for instance from the government, to implement the regulation program in the case it does not value pollution. We obtain that this hypothetical compensation would be equal to  $3.582 \times 10^{10}$.

\begin{figure}[H]
\centering
\includegraphics[scale=.17]{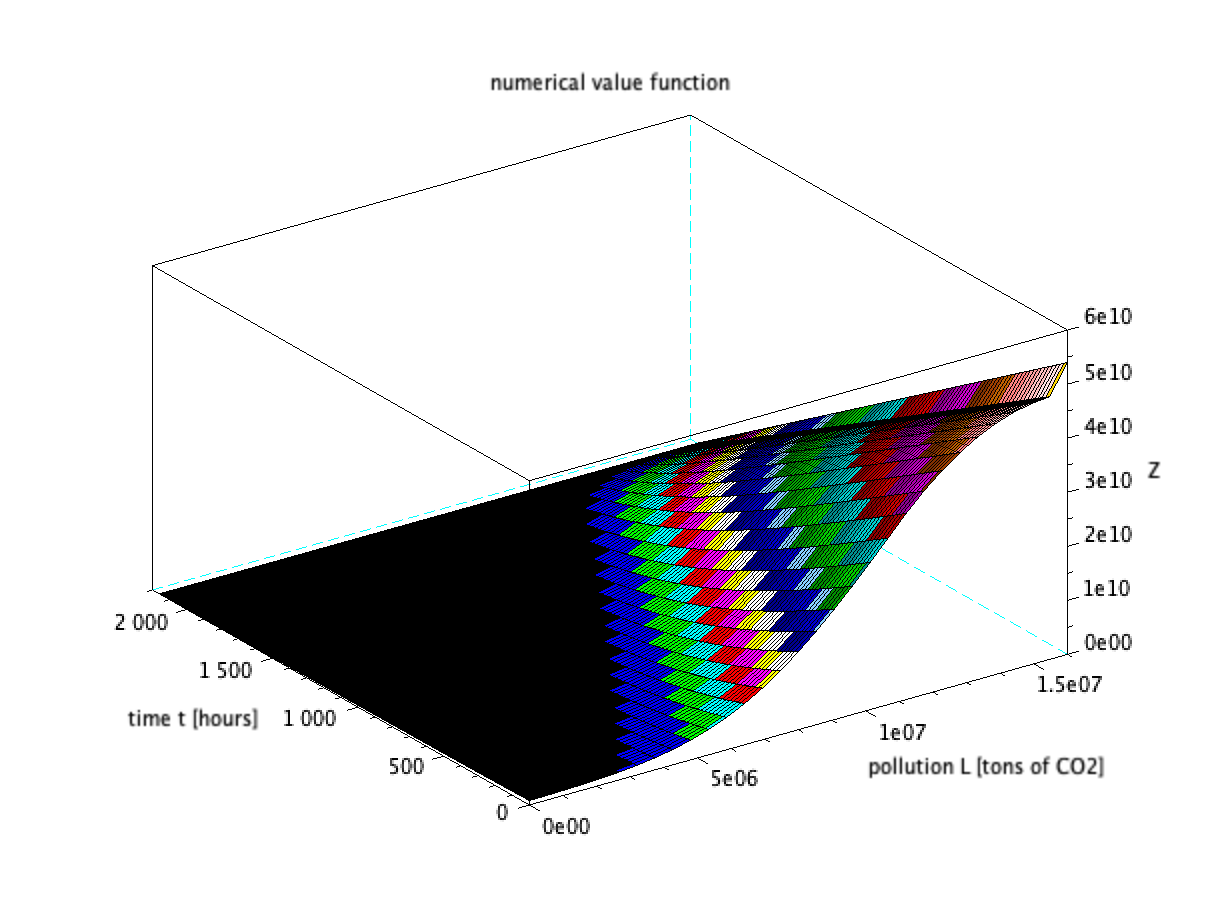}
\caption{Value function with regulation.}  \label{fig:mult-vf}
\end{figure}

\vspace{0.5em}
Figure \ref{fig:grid-p} shows the optimal pollution trajectories. We see that the contracts reduce the increment in the pollution levels by more than thirty percent (without regulation the increment is almost 10000000 tons of $\mathrm{CO}_2$ and with regulation the increment is around 6500000 tons of $\mathrm{CO}_2$). 
\begin{figure}[ht!]
\centering
	\begin{tabular}{cc}
				\includegraphics[scale=0.17]{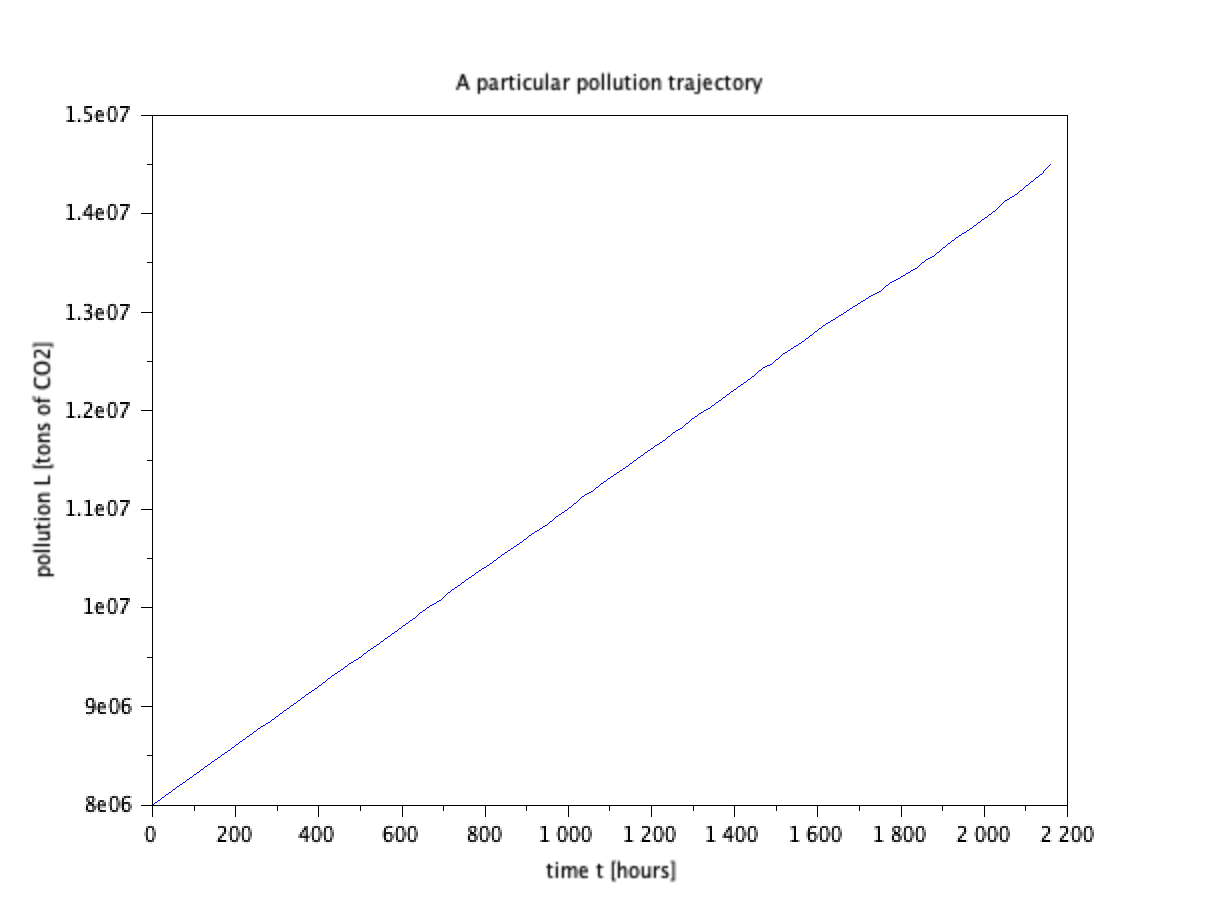} 	
&
				\includegraphics[scale=0.17]{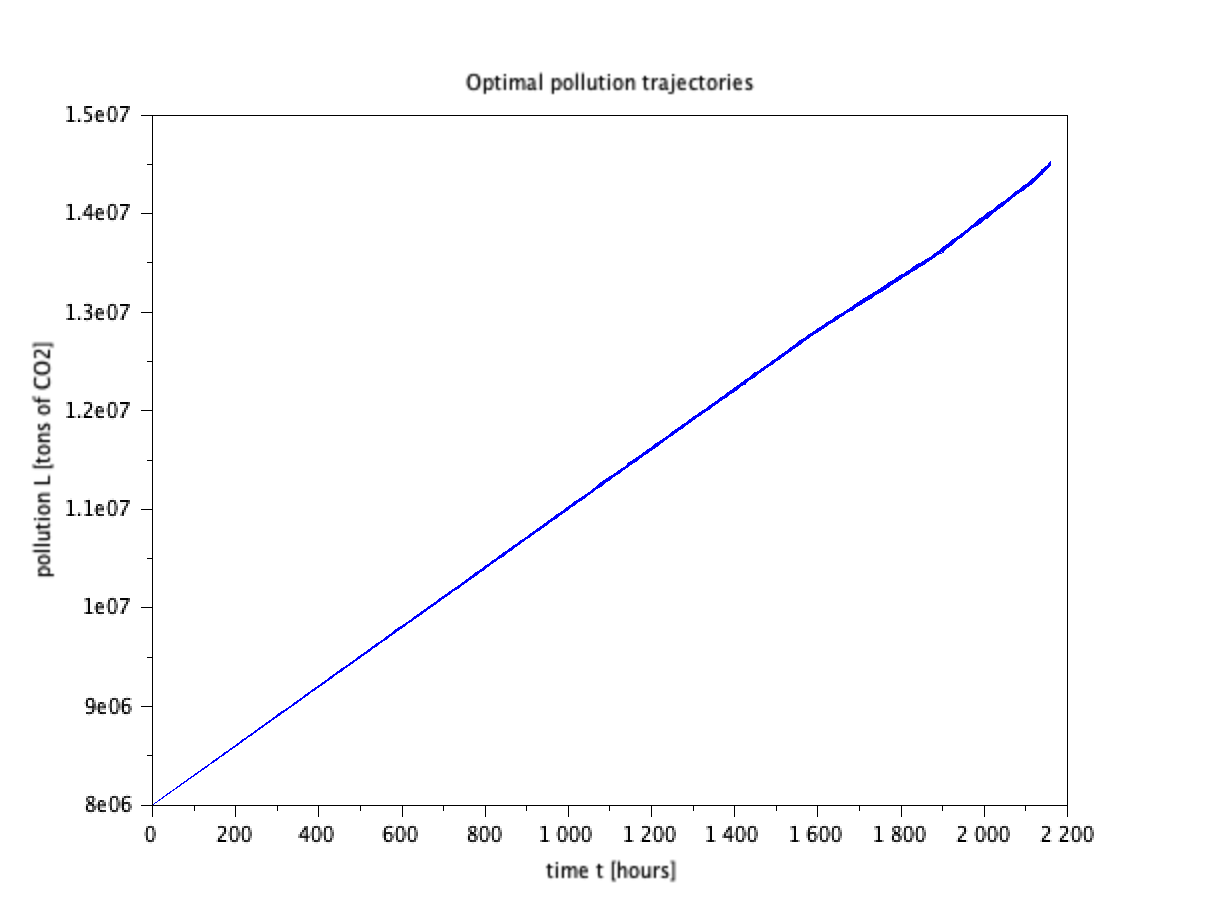} 	
	\end{tabular} 
\caption{Pollution trajectories under regulation.}  \label{fig:grid-p}
\end{figure}

The optimal production and flow trajectories are shown in Figure \ref{fig:mult-q-f}. The production asked to each firm is piece-wise constant, as well as the flows through the line, and we identify three periods of time in which we describe the actions of the providers. In the first period, the node in the Center is asked to produce exactly $2400$ MWh, which it does by using only the solar technology and therefore it does not pollute nor incur in any cost. Since this amount does not cover the local demand, the remaining of the demand in the Center is provided by the nodes in the North and the South. In these nodes all the available technologies are used to cover their local demands and what is required by the node in the Center. The node in the North is asked the highest production in the network of $5600$ MWh. The node in the South produces $2000$ MWh and receives a flow of $700$ MWh from the North. After some time has passed we move into the second period, in which the main change is that the node in the Center is asked to cover most of its local demand, by using all the available technologies and producing $5500$ MWh. The node in the North contributes with a flow of $500$ MWh and no flow is received from the node in the South. 
The second period occurs when the increase of pollution has been controlled for enough time and it becomes less costly to allow the node in the Center to produce power with the polluting technologies. This is a temporary situation and after some time the node in the Center is asked to act again as in the first period. In the third period, the node in the Center produces exactly $2400$ MWh by using only the solar technology. The situation here is very similar to the one in the first period, with slight differences in the production of the nodes in the North and the South, as well as the flows they send through the network. 

\begin{figure}[H]
\centering
	\begin{tabular}{cc}
				\includegraphics[scale=0.17]{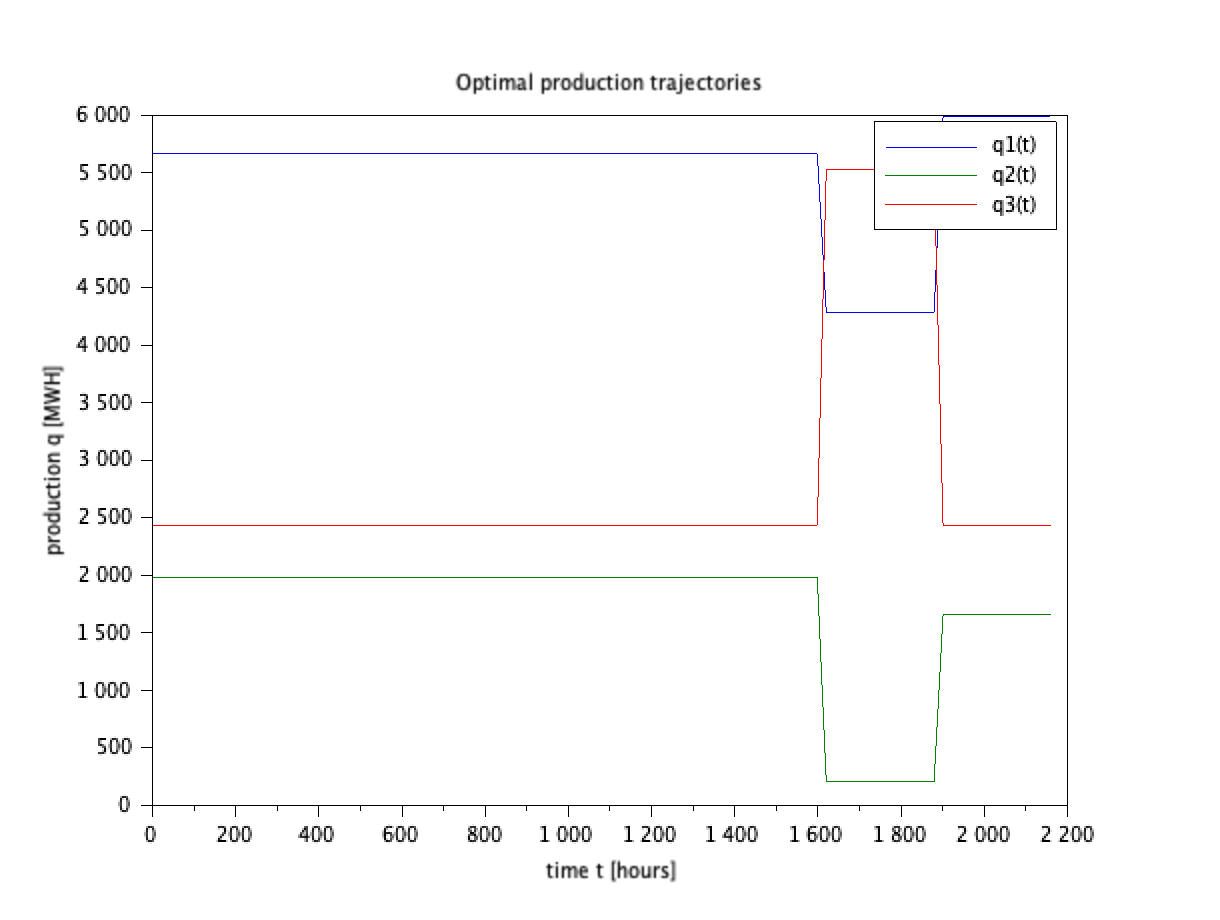} 	
&
				\includegraphics[scale=0.17]{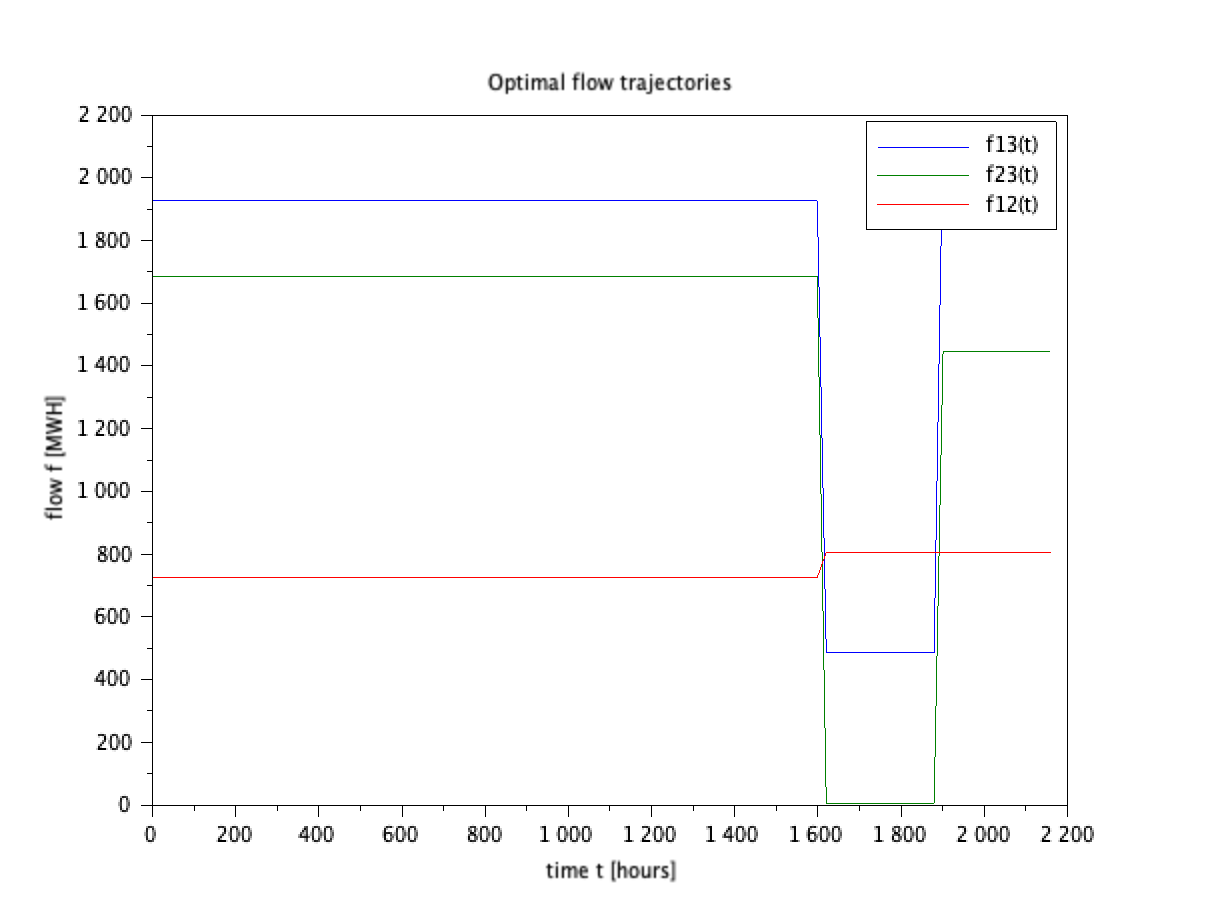} 	
	\end{tabular} 
\caption{Production and transmission plans under regulation.}  \label{fig:mult-q-f}
\end{figure}

The optimal efforts and the certainty equivalents of the producers are shown in \Cref{fig:mult-e-ce}. The efforts performed by the firms drop to zero at the end of the contract since, as shown in the example of Section \ref{sec-example}, when closer to the end of the contract it is too expensive for the ISO to encourage the firms. Prior to the drop, the node in the North, in charge of the highest production, is given incentives to perform full effort and its certainty equivalent increases linearly. The node in the Center receives full incentives during the second period in which it produces with all the technologies, and none during the first and third periods when it produces only with solar technology. The node in the South is the most expensive to encourage (its production is gas-based) and we see that during the third period it is given less incentives than the node in the North which results in an intermediate effort which decreases sooner to zero.
\begin{figure}[ht!]
\centering
	\begin{tabular}{cc}
		\includegraphics[scale=0.17]{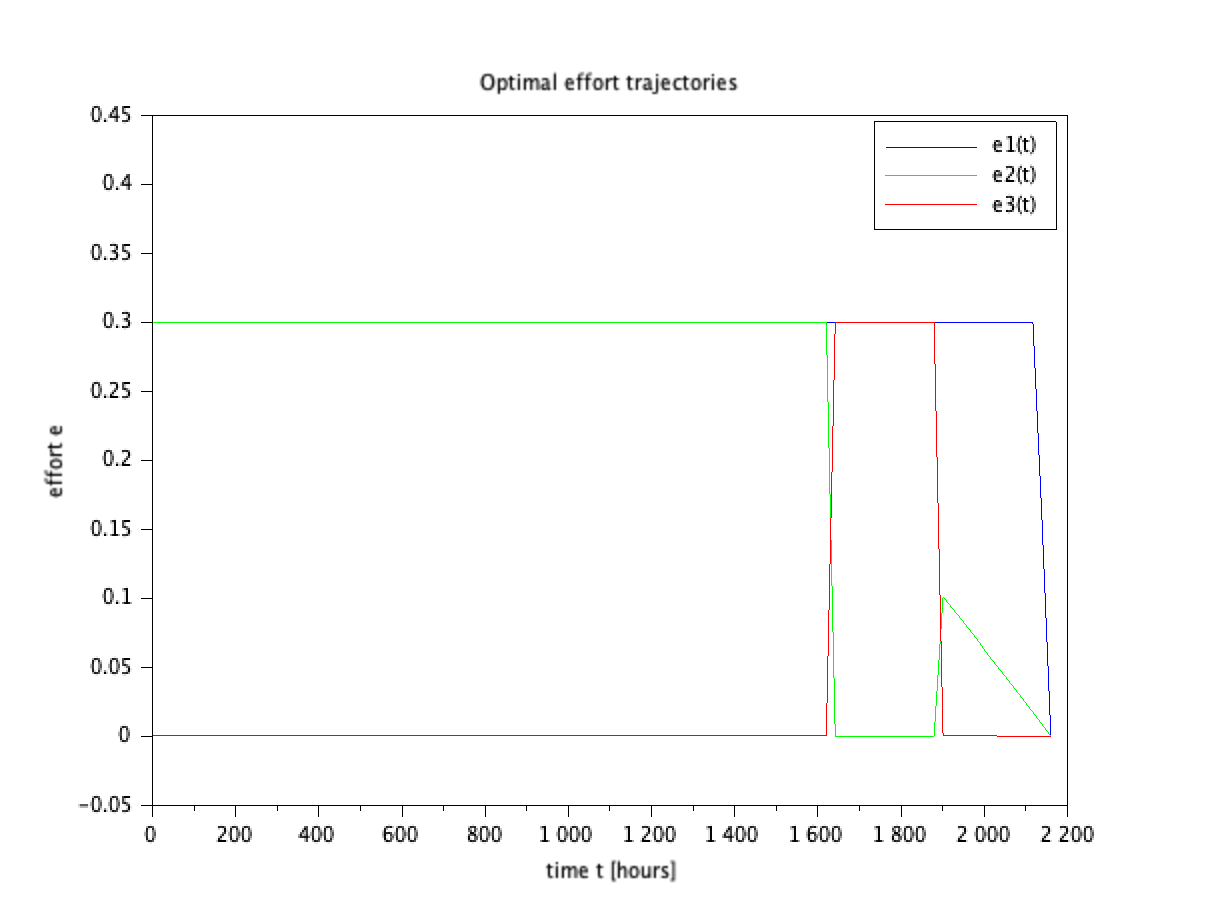} 	
		&
		\includegraphics[scale=0.17]{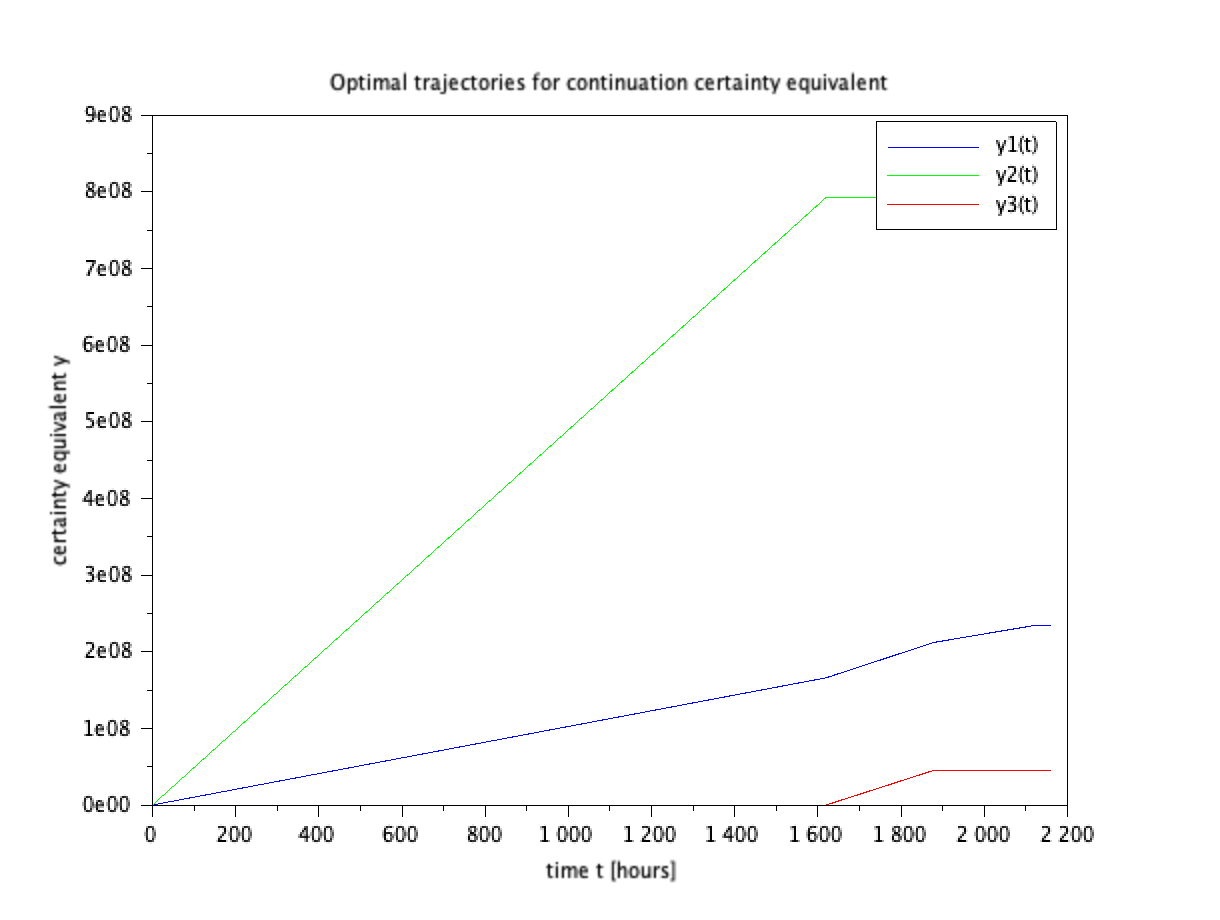} 	
	\end{tabular} 
\caption{Efforts and certainty equivalents with regulation.}  \label{fig:mult-e-ce}
\end{figure}

To conclude, let us discuss on the nature of the curves just presented. As mentioned in \Cref{sec-example}, constant trajectories are obtained when the spatial derivative of the value function is sufficiently big, which results in the (quadratic) optimisation over the production/flows being attained at the boundaries of some adjusted domain. In the setting with multiple technologies, the objective functional is defined piece-wise and these boundaries depend on the derivative of the value function. The plan of the ISO is dynamically updated and we can identify periods of time during which it remains constant. However, seen as whole trajectories, the optimal production and transmission plans are not constant, which means that the solution to problem $V_0$ of the ISO does not coincide with the one of the simpler problem $V^d$. Indeed, we have $V^d=1.75 \times 10^{10}$ and the solution to problem $V^d$ is attained at the point
\[
q^1=5829.6,\; q^2=1808,\; q^3=2436,\; \phi^{1,2}= 400,\; \phi^{1,3}=2400,\; \phi^{2,3}=1200.
\]
To conclude, we perform a sensitivity analysis on the parameter $\sigma$, corresponding to the volatility of the total pollution and thus representing the strength of moral hazard in the problem. We set $\sigma_0=200$ in tons of $\mathrm{CO}_2$ and we display in \Cref{fig:sigma} the optimal production and effort of the first provider for different values of $\sigma$. The behaviour of the second and third producers can be deduced from the pictures, by taking into account the discussion above. 
\begin{figure}[ht!]
\centering
	\begin{tabular}{cc}
		\includegraphics[scale=0.14]{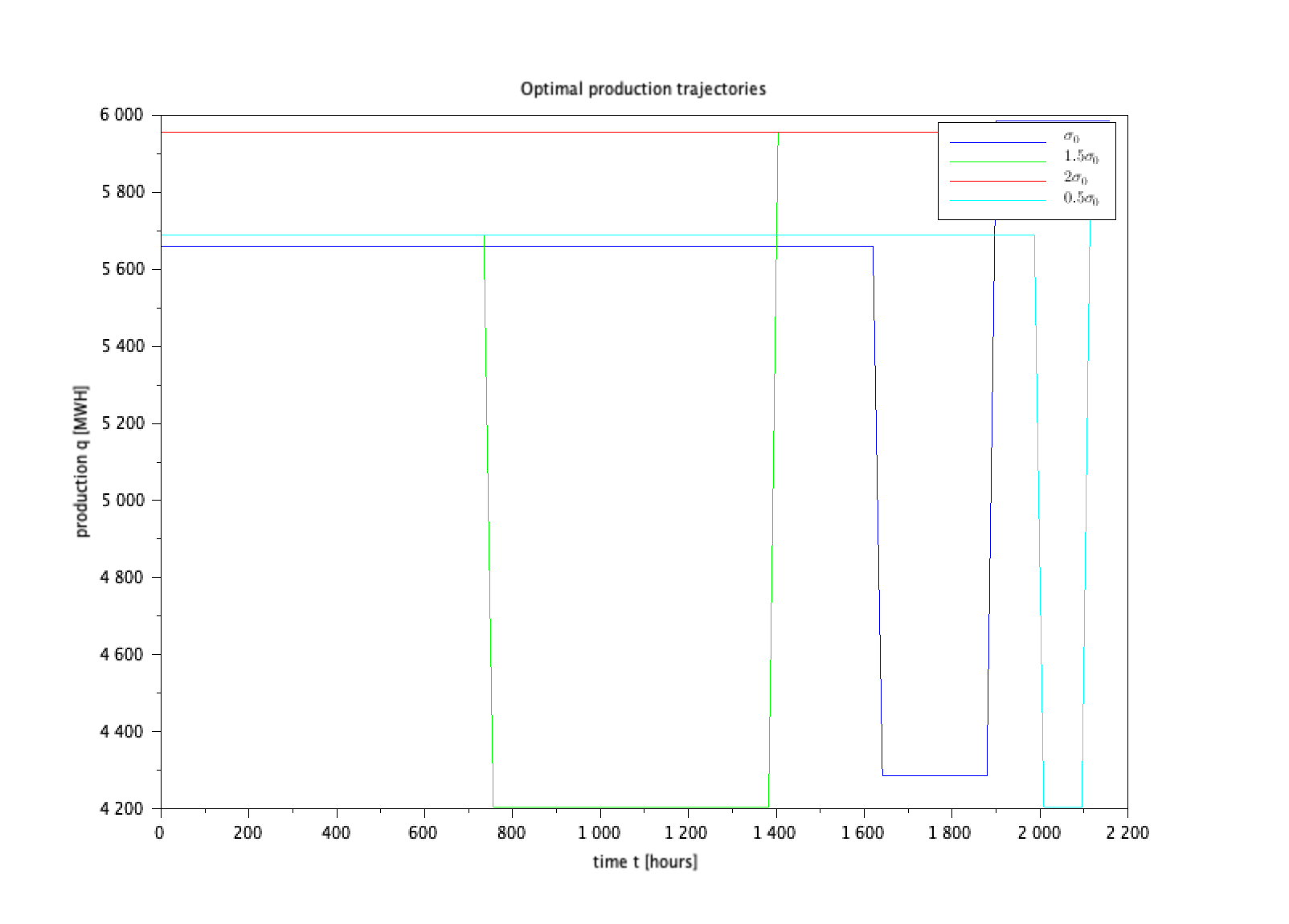} 	
		&
		\includegraphics[scale=0.17]{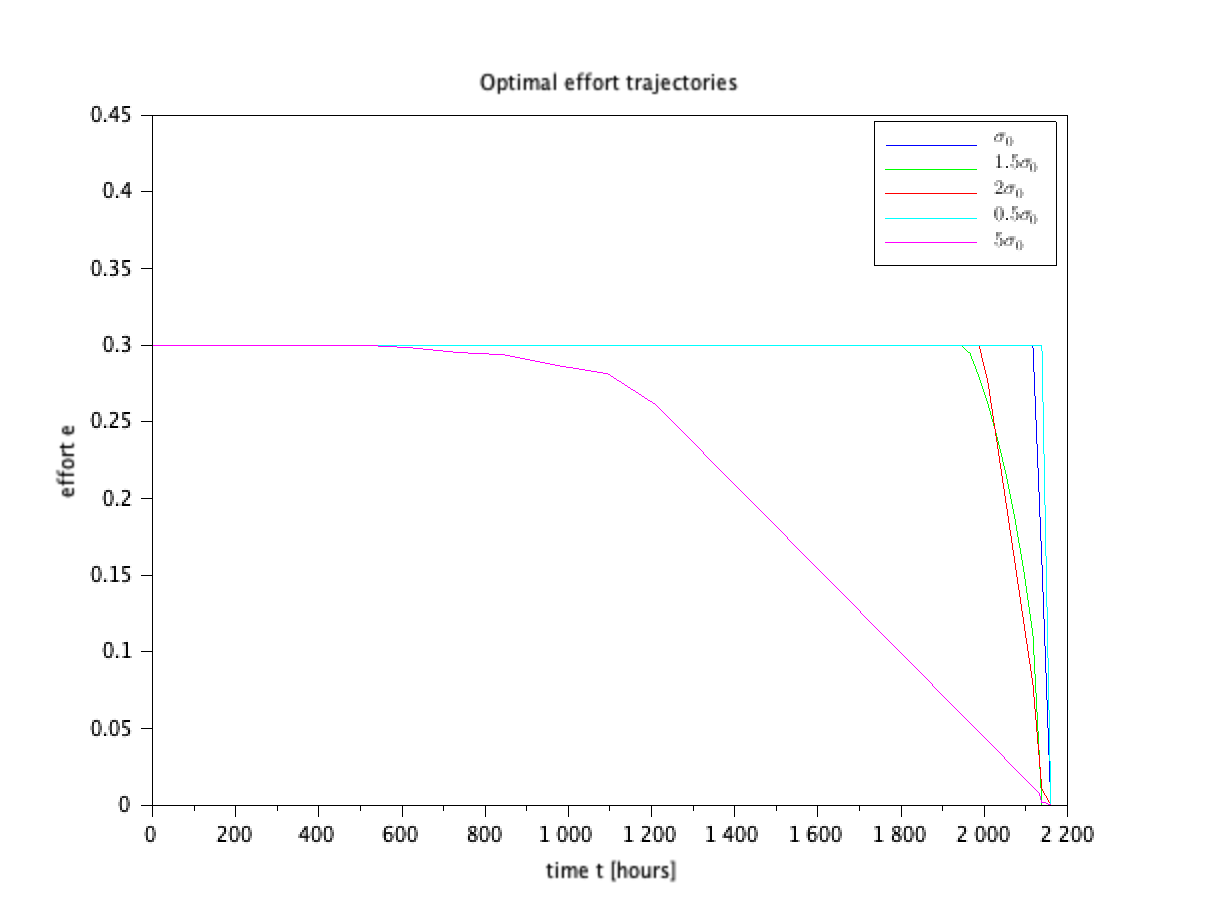} 	
	\end{tabular} 
\caption{Efforts and certainty equivalents with regulation.}  \label{fig:sigma}
\end{figure}

We see that the length of the three periods of production is affected by the values of $\sigma$. The smaller the value of this parameter, the shorter is the length of the second period, which is the only period in which the third provider is allowed to pollute. We interpret this by the fact that when decreasing $\sigma$, it is easier to detect moral hazard and to implement a strict policy with almost no pollution. Moreover, when $\sigma$ increases, the change from the first to the second period occurs faster and the third configuration becomes predominant. For big values of $\sigma$, in this setting greater than $2\sigma_0$, it is optimal to produce according to the third configuration for the whole life of the contracts. This means that problems $V_0$ and $V^d$ share the same solution. Concerning the optimal efforts, we see that they decrease to zero when the volatility is bigger, since the actions of the providers are harder to detect. In the limit case, when $\sigma$ is very large and moral hazard is too strong, the contracts cannot help to control pollution since the providers will exert practically no effort.

{\small
\bibliography{bibliographyDylan}}

\begin{thebibliography}{34}
\providecommand{\natexlab}[1]{#1}
\providecommand{\url}[1]{\texttt{#1}}
\expandafter\ifx\csname urlstyle\endcsname\relax
  \providecommand{\doi}[1]{doi: #1}\else
  \providecommand{\doi}{doi: \begingroup \urlstyle{rm}\Url}\fi

\bibitem[A{\"\i}d and Biagini(2021)]{aid2021optimal}
R.~A{\"\i}d and S.~Biagini.
\newblock Optimal dynamic regulation of carbon emissions market: a variational
  approach.
\newblock \emph{SSRN preprint 3792384}, 2021.

\bibitem[A{\"\i}d et~al.(2018)A{\"\i}d, Possama{\"\i}, and
  Touzi]{aid2018optimal}
R.~A{\"\i}d, D.~Possama{\"\i}, and N.~Touzi.
\newblock Optimal electricity demand response contracting with responsiveness
  incentives.
\newblock \emph{ArXiv preprint arXiv:1810.09063}, 2018.

\bibitem[Alasseur et~al.(2020)Alasseur, Ekeland, \'Elie,
  Hern{\'a}ndez~Santib{\'a}{\~n}ez, and Possama{\"\i}]{alasseur2020adverse}
C.~Alasseur, I.~Ekeland, R.~\'Elie, N.~Hern{\'a}ndez~Santib{\'a}{\~n}ez, and
  D.~Possama{\"\i}.
\newblock An adverse selection approach to power pricing.
\newblock \emph{SIAM Journal on Control and Optimization}, 58\penalty0
  (2):\penalty0 686--713, 2020.

\bibitem[Aliprantis and Border(2006)]{aliprantis2006infinite}
C.D. Aliprantis and K.~Border.
\newblock \emph{Infinite dimensional analysis: a hitchhiker's guide}.
\newblock Springer--Verlag Berlin Heidelberg, 3rd edition, 2006.

\bibitem[Athanassoglou(2010)]{athanassoglou2010dynamic}
S.~Athanassoglou.
\newblock Dynamic nonpoint-source pollution control policy: ambient transfers
  and uncertainty.
\newblock \emph{Journal of Economic Dynamics and Control}, 34\penalty0
  (12):\penalty0 2494--2509, 2010.

\bibitem[Bonnans et~al.(2004)Bonnans, Ottenwaelter, and
  Zidani]{bonnans2004fast}
J.-F. Bonnans, {\'E}.~Ottenwaelter, and H.~Zidani.
\newblock A fast algorithm for the two dimensional {HJB} equation of stochastic
  control.
\newblock \emph{ESAIM: Mathematical Modelling and Numerical
  Analysis--Mod{\'e}lisation Math{\'e}matique et Analyse Num{\'e}rique},
  38\penalty0 (4):\penalty0 723--735, 2004.

\bibitem[Bontems and Thomas(2006)]{bontems2006regulating}
P.~Bontems and A.~Thomas.
\newblock Regulating nitrogen pollution with risk averse farmers under hidden
  information and moral hazard.
\newblock \emph{American Journal of agricultural economics}, 88\penalty0
  (1):\penalty0 57--72, 2006.

\bibitem[Borenstein et~al.(2000)Borenstein, Bushnell, and
  Stoft]{borenstein2000competitive}
S.~Borenstein, J.~Bushnell, and S.~Stoft.
\newblock The competitive effects of transmission capacity in a deregulated
  electricity industry.
\newblock \emph{The RAND Journal of Economics}, 31\penalty0 (2):\penalty0
  294--325, 2000.

\bibitem[Borodin and Salminen(2002)]{borodin2002handbook}
A.N. Borodin and P.~Salminen.
\newblock \emph{Handbook of {B}rownian motion---facts and formulae}.
\newblock Probability and its applications. Birkh{\"a}user Basel, 2nd edition,
  2002.

\bibitem[Bouchard et~al.(2016)Bouchard, Possama{\"\i}, and
  Tan]{bouchard2016general}
B.~Bouchard, D.~Possama{\"\i}, and X.~Tan.
\newblock A general {D}oob--{M}eyer--{M}ertens decomposition for
  $g$-supermartingale systems.
\newblock \emph{Electronic Journal of Probability}, 21\penalty0 (36):\penalty0
  1--21, 2016.

\bibitem[Campbell et~al.(2021)Campbell, Chen, Shrivats, and
  Jaimungal]{campbell2021deep}
S.~Campbell, Y.~Chen, A.~Shrivats, and S.~Jaimungal.
\newblock Deep learning for principal-agent mean field games.
\newblock \emph{arXiv preprint arXiv:2110.01127}, 2021.

\bibitem[Chambers and Quiggin(1996)]{chambers1996non}
R.~Chambers and J.~Quiggin.
\newblock Non-point-source pollution regulation as a multi-task principal-agent
  problem.
\newblock \emph{Journal of Public Economics}, 59\penalty0 (1):\penalty0
  95--116, 1996.

\bibitem[Cvitani{\'c} et~al.(2018)Cvitani{\'c}, Possama{\"\i}, and
  Touzi]{cvitanic2018dynamic}
J.~Cvitani{\'c}, D.~Possama{\"\i}, and N.~Touzi.
\newblock Dynamic programming approach to principal--agent problems.
\newblock \emph{Finance and Stochastics}, 22\penalty0 (1):\penalty0 1--37,
  2018.

\bibitem[Dellacherie and Lenglart(1981)]{della1981sur}
C.~Dellacherie and \'E. Lenglart.
\newblock Sur des probl{\`e}mes de r{\'e}gularisation, de recollement et
  d'interpolation en th{\'e}orie des martingales.
\newblock \emph{S\'eminaire de probabilit\'es de Strasbourg}, XV:\penalty0
  328--346, 1981.

\bibitem[El~Karoui and Tan(2013)]{karoui2013capacities2}
N.~El~Karoui and X.~Tan.
\newblock Capacities, measurable selection and dynamic programming part {II}:
  application in stochastic control problems.
\newblock \emph{ArXiv preprint arXiv:1310.3364}, 2013.

\bibitem[El~Karoui et~al.(1997)El~Karoui, Peng, and Quenez]{el1997backward}
N.~El~Karoui, S.~Peng, and M.-C. Quenez.
\newblock Backward stochastic differential equations in finance.
\newblock \emph{Mathematical Finance}, 7\penalty0 (1):\penalty0 1--71, 1997.

\bibitem[\'Elie and Possama{\"\i}(2019)]{elie2019contracting}
R.~\'Elie and D.~Possama{\"\i}.
\newblock Contracting theory with competitive interacting agents.
\newblock \emph{SIAM Journal on Control and Optimization}, 57\penalty0
  (2):\penalty0 1157--1188, 2019.

\bibitem[\'Elie et~al.(2021)\'Elie, Hubert, Mastrolia, and
  Possama{\"\i}]{elie2021mean}
R.~\'Elie, E.~Hubert, T.~Mastrolia, and D.~Possama{\"\i}.
\newblock Mean-field moral hazard for optimal energy demand response
  management.
\newblock \emph{Mathematical Finance}, 31\penalty0 (1):\penalty0 399--473,
  2021.

\bibitem[Escobar and Jofr{\'e}(2010)]{escobar2010monopolistic}
J.F. Escobar and A.~Jofr{\'e}.
\newblock Monopolistic competition in electricity networks with resistance
  losses.
\newblock \emph{Economic Theory}, 44\penalty0 (1):\penalty0 101--121, 2010.

\bibitem[Holmstr{\"o}m(1982)]{holmstrom1982moral}
B.~Holmstr{\"o}m.
\newblock Moral hazard in teams.
\newblock \emph{The Bell Journal of Economics}, 13\penalty0 (2):\penalty0
  324--340, 1982.

\bibitem[Holmstr{\"o}m and Milgrom(1987)]{holmstrom1987aggregation}
B.~Holmstr{\"o}m and P.~Milgrom.
\newblock Aggregation and linearity in the provision of intertemporal
  incentives.
\newblock \emph{Econometrica}, 55\penalty0 (2):\penalty0 303--328, 1987.

\bibitem[Hubert(2020)]{hubert2020continuous}
E.~Hubert.
\newblock Continuous-time incentives in hierarchies.
\newblock \emph{arXiv preprint arXiv:2007.10758}, 2020.

\bibitem[Jacod and Shiryaev(2003)]{jacod2003limit}
J.~Jacod and A.N. Shiryaev.
\newblock \emph{Limit theorems for stochastic processes}, volume 288 of
  \emph{Grundlehren der mathematischen Wissenschaften}.
\newblock Springer--Verlag Berlin Heidelberg, 2003.

\bibitem[Jaimungal et~al.(2021)Jaimungal, Shrivats, and
  Firoozi]{jaimungal2021mean}
S.~Jaimungal, A.~Shrivats, and D.~Firoozi.
\newblock A mean-field game approach to equilibrium pricing in renewable energy
  certificate markets.
\newblock In \emph{2021 Joint Mathematics Meetings (JMM)}. AMS, 2021.

\bibitem[La~Nauze and Mezzetti(2019)]{la2019dynamic}
A.~La~Nauze and C.~Mezzetti.
\newblock Dynamic incentive regulation of diffuse pollution.
\newblock \emph{Journal of Environmental Economics and Management},
  93:\penalty0 101--124, 2019.

\bibitem[Ma and Zhang(2002)]{ma2002representation}
J.~Ma and J.~Zhang.
\newblock Representation theorems for backward stochastic differential
  equations.
\newblock \emph{The Annals of Applied Probability}, 12\penalty0 (4):\penalty0
  1390--1418, 2002.

\bibitem[Milgrom and Segal(2002)]{milgrom2002envelope}
P.~Milgrom and I.~Segal.
\newblock Envelope theorems for arbitrary choice sets.
\newblock \emph{Econometrica}, 70\penalty0 (2):\penalty0 583--601, 2002.

\bibitem[Pardoux and R{\^a}șcanu(2014)]{pardoux2014stochastic}
\'E. Pardoux and A.~R{\^a}șcanu.
\newblock \emph{Stochastic differential equations, backward {SDE}s, partial
  differential equations}, volume~69 of \emph{Stochastic modelling and applied
  probability}.
\newblock Springer International Publishing, 2014.

\bibitem[Pham(2009)]{pham2009continuous}
H.~Pham.
\newblock \emph{Continuous-time stochastic control and optimization with
  financial applications}, volume~61 of \emph{Stochastic modelling and applied
  probability}.
\newblock Springer--Verlag Berlin Heidelberg, 2009.

\bibitem[Sannikov(2007)]{sannikov2007agency}
Y.~Sannikov.
\newblock Agency problems, screening and increasing credit lines.
\newblock Technical report, Princeton University, 2007.

\bibitem[Sannikov(2008)]{sannikov2008continuous}
Y.~Sannikov.
\newblock A continuous-time version of the principal--agent problem.
\newblock \emph{The Review of Economic Studies}, 75\penalty0 (3):\penalty0
  957--984, 2008.

\bibitem[Segerson(1988)]{segerson1988uncertainty}
K.~Segerson.
\newblock Uncertainty and incentives for nonpoint pollution control.
\newblock \emph{Journal of environmental economics and management}, 15\penalty0
  (1):\penalty0 87--98, 1988.

\bibitem[Xepapadeas(1992)]{xepapadeas1992environmental}
A.~Xepapadeas.
\newblock Environmental policy design and dynamic nonpoint-source pollution.
\newblock \emph{Journal of environmental economics and management}, 23\penalty0
  (1):\penalty0 22--39, 1992.

\bibitem[Yoeurp and Meyer(1976)]{yoeurp1976decomposition}
C.~Yoeurp and P.-A. Meyer.
\newblock Sur la d{\'e}composition multiplicative des sousmartingales
  positives.
\newblock \emph{S{\'e}minaire de probabilit{\'e}s de Strasbourg}, X:\penalty0
  501--504, 1976.

\end{thebibliography}

\begin{appendix}

\section{The model} \label{ap:model}


We start by listing the properties that the functions in our model satisfy.

\begin{assumption} $(i)$ For every $i\in\{1,\dots,N\}$ we assume

\smallskip
$\bullet$ the function $c_i:\R_+\longrightarrow\R_+$ is continuous and increasing$;$

\smallskip
$\bullet$ the function $p_i:\R_+\longrightarrow\R_+$ is continuous and increasing$;$

\smallskip
$\bullet$ the function $h_i:A_i \rightarrow \R_+$ is continuous, increasing and strictly convex$;$

\medskip
$(ii)$ the function $\Lambda:\R\longrightarrow\R_+$ is continuously differentiable with quadratic growth, that is, there exists $K_\Lambda>0$ such that $|\Lambda(x)|\leq K_\Lambda(1+|x|^2)$ for every $x\in\R$.

\end{assumption}

We define also the following constants that will be used in some of the proofs
\begin{equation}\label{eq:constants}
\bar{c} := \max_{i\in\{1,\dots,N\}} \bigg\{  \sup_{q\in[0,Q^i]} c_i(q) \bigg\}, \; \bar{p} := \max_{i\in\{1,\dots,N\}} \bigg\{ \sup_{q\in[0,Q^i]} p_i(q)\bigg\}, \;\bar{h} := \max_{i\in\{1,\dots,N\}} \bigg\{ \sup_{a\in A_i} h_i(a)\bigg\}.
\end{equation}


We present now the weak formulation of the problem. We place ourselves in a probability space $(\Omega,\Fc,\P)$, representing the randomness in the weather conditions that cannot be controlled. The noise of the model will be given by a one-dimensional standard Brownian motion $W$ and the volatility $\sigma>0$. Let us define the driftless Brownian motion 
\[
\mathrm{d}L_t = \sigma \mathrm{d}W_t.
\]
We denote by $\mathbb{F}:=(\Fc_t)_{t\in[0,T]}$, the filtration generated by $L$, completed under the measure $\mathbb{P}$. Each producer $i\in\{1,\dots,N\}$, has a set of actions $A_i\subset[0,1]$,  a closed interval, to reduce its own pollution. We denote by $\Ac_i$ the space of controls of producer $i$, that is $a^i\in\Ac_i$ if it is an $A_i$-valued, $\mathbb{F}$-predictable process. We define the set of joint actions $\Ac=\prod_{i=1}^N \Ac^i$ and for a joint effort $a=(a^1,\dots,a^N)\in\Ac$ the probability measure
\[
\frac{\mathrm{d}\P^a}{\mathrm{d}\P} =\Ec\bigg( \sum_{i=1}^N \int_0^T \frac{(1-a^i_t)p_i(q^i_t)}{\sigma} \mathrm{d}W_t \bigg).
\]
Notice that since $a$ is bounded, and by Novikov's condition, the stochastic exponential is indeed a true martingale. We have thus, by Girsanov's theorem, that $W^a_t=W_t - \int_0^t \sum_{i=1}^N \frac{(1-a^i_s)p_i(q^i_s)}{\sigma} \mathrm{d}s$ is a $\P^a$--Brownian motion and the pollution process has the desired dynamics
\[
 \mathrm{d}L_t = \sum_{i=1}^N (1-a^i_t)p_i(q_t^i) \mathrm{d}t + \sigma \mathrm{d}W^a_t.
\]
Finally, for $i\in\{1,\dots,N\}$, we define the set of actions of the other producers $\Ac^{-i} = \prod_{j\neq i} \Ac^i$.

\begin{remark}
As the reader may have noted, both $\P^a$ and $W^a$ depend on the production plan. To ease the notation, specially when solving the problem of the providers for whom $(q,\phi)\in\Pc$ is fixed, we refrain from adding an additional index to the probability measures and their Brownian motions.
\end{remark}

\section{Reduction of the class of contracts} \label{ap:main}

\subsection{BSDE characterisation of Nash equilibria}

In this section, for any admissible contract $(q,\phi,\xi)\in\Cc$ offered by the ISO, we characterise the set of Nash equilibria $\text{NE}(q,\phi,\xi)$ through the solutions to a multidimensional BSDE. We start with an auxiliary result. All the proofs are deferred to \Cref{ap:proofs}.
\begin{lemma}\label{lemma:a-star}
For every $i\in\{1,\dots,n\}$, the correspondence $a^{\star,i}: \R_+^N\times \R^N \rightrightarrows A_i$ given by
\begin{equation}\label{eq:a-star-appendix}
a^{\star,i} (q,z) := \argmin_{a\in A_i} \big\{ h_i(a)-z^i (1-a)p_i(q^i) \big\},
\end{equation}
is single-valued and continuous. 
\end{lemma}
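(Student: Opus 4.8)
The plan is to reduce the minimisation defining $a^{\star,i}(q,z)$ to a one-dimensional strictly convex problem over a fixed compact interval, from which both single-valuedness and continuity follow from classical arguments. First I would observe that the term $z^i p_i(q^i)$ does not depend on the minimisation variable $a$, so the objective can be rewritten as
\[
h_i(a) - z^i(1-a)p_i(q^i) = h_i(a) + z^i p_i(q^i)\, a - z^i p_i(q^i).
\]
Thus, up to the additive constant $-z^i p_i(q^i)$ (irrelevant for the $\argmin$), one is minimising the map $a\mapsto h_i(a) + \beta(q,z)\, a$ over $A_i$, where $\beta(q,z) := z^i p_i(q^i)$.

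For single-valuedness, I would argue as follows. For fixed $(q,z)$ the coefficient $\beta(q,z)$ is a fixed real number, so $a\mapsto \beta(q,z)\,a$ is affine, hence convex. By the standing assumptions $h_i$ is continuous and strictly convex on the closed interval $A_i$, and the sum of a strictly convex and a convex function is strictly convex. A strictly convex continuous function on the nonempty compact interval $A_i$ attains its infimum at exactly one point, which shows that $a^{\star,i}(q,z)$ is a singleton.

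For continuity, I would invoke Berge's maximum theorem. The joint map $(a,q,z)\mapsto h_i(a) + z^i p_i(q^i)\,a$ is continuous on $A_i\times\R_+^N\times\R^N$, since $h_i$ and $p_i$ are continuous, and the feasible set $A_i$ is a fixed nonempty compact set (so the constraint correspondence is trivially continuous and compact-valued). Berge's theorem then yields that the solution correspondence $(q,z)\rightrightarrows a^{\star,i}(q,z)$ is nonempty-valued and upper hemicontinuous; having already shown it is single-valued, upper hemicontinuity is equivalent to continuity of $a^{\star,i}$ as a function, which is the claim. Alternatively, a direct sequential argument works: given $(q_n,z_n)\to(q,z)$, compactness of $A_i$ allows one to extract from any subsequence a convergent sub-subsequence $a^{\star,i}(q_{n_k},z_{n_k})\to\bar a$; passing to the limit in the minimality inequality and using continuity of the objective shows $\bar a$ minimises the limit problem, and uniqueness forces $\bar a = a^{\star,i}(q,z)$, so the whole sequence converges.

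There is no serious obstacle here; the only points requiring care are that \emph{strict} convexity of $h_i$ is precisely what rules out a flat minimiser and hence delivers uniqueness, and that the constraint set $A_i$ is constant in $(q,z)$, so the continuity of the constraint correspondence needed for Berge's theorem holds automatically.
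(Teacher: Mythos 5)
Your proof is correct and follows essentially the same route as the paper's: existence of a minimiser from compactness of $A_i$ and continuity of $h_i$ and $p_i$, uniqueness from strict convexity of $h_i$ (the remaining term being affine in $a$), and continuity of the selection via Berge's maximum theorem. The extra details you supply—the explicit affine decomposition, the equivalence of upper hemicontinuity and continuity for single-valued correspondences, and the alternative sequential argument—are sound elaborations of the same argument.
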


Suppose a contract $(q,\phi,\xi)\in\Cc$ is given. Based on the previous lemma, we introduce the following $N$-dimensional BSDE
\begin{equation}\label{eq:BSDE}
Y_t = \xi - \int_t^T f(q_s,Z_s) \mathrm{d}s -\int_t^T Z_s \sigma \mathrm{d}W_s,
\end{equation}
where the vector function $f:\R_+^N \times\R^N\longrightarrow\R^N$ is defined by
\begin{equation}\label{eq:generator}
f^i(q,z) := h_i\big(a^{\star,i}(q,z)\big) + c_i(q^i)-z^i \sum_{j=1}^N \big(1-a^{\star,j}(q,z)\big)p_j(q^j) + \frac{\rho\sigma^2}{2} (z^i)^2 ,\; i\in\{1,\dots,N\}.
\end{equation}

We define now the notion of solution to the previous BSDE.
\begin{definition}
The space $\mathbb{H}^2_\text{\rm loc}(\mathbb{R}^N)$ consists in the $\R^N$-valued predictable processes $Z$ satisfying 
\[
\mathbb{P}\bigg[\int_0^T |Z_s^i|^2 \mathrm{d}s<\infty \bigg]=1, \; \forall i\in\{1,\dots,N\}.
\]
\end{definition}

\begin{definition}
A solution to {\rm BSDE} \eqref{eq:BSDE} is a pair $(Y,Z)$ such that $Y$ is a continuous, $\R^N$-valued, $\F$-adapted process, $Z\in \mathbb{H}^2_\text{\rm loc}(\mathbb{R}^N)$, which satisfies \eqref{eq:BSDE}.
\end{definition}

Before presenting the main result of the section, we need to introduce a family of processes that will be used to reformulate the class of admissible remunerations, as we show below. For $y\in\R^N$ and $Z\in\mathbb{H}^2_\text{loc}(\R^N)$, we define the process $Y^{y,q,Z}$ through the following SDE 
\[
Y_t^{y,q,Z} := y + \int_0^t f(q_s,Z_s) \mathrm{d}s + \int_0^t Z_s \sigma \mathrm{d}W_s,\; t\in[0,T].
\]

We define finally the following class of processes, using the notation $a^{\star,i}_s := a^{\star,i}(q_s,Z_s)$ and with $\Tc_{0,T}$ the set of stopping times with values in $[0,T]$
\begin{equation}\label{eq:classZ}
\Zc = \bigg\{ Z\in \mathbb{H}^2_\text{loc}(\mathbb{R}^N) :  \sup_{\tau \in\Tc_{0,T}} \E\Big[ \big|\Uc_A(Y_\tau^{0,q,Z,i})\big|^p \Big] < \infty, \; \text{for}\; \text{some}\; p>1   \bigg\}.
\end{equation}

\begin{remark}
The definition of the class $\Zc$ is based on the production plan fixed at beginning of this section. However, it is easy to see that the class is independent of the contract $(q,\phi,\xi)\in\Cc$ due to the multiplicative property of the function $\Uc_A$ and the boundedness of the plan $(q,\phi)\in\Pc$. For this reason we do not index the class on the underlying contract.
\end{remark}

The following proposition, similar to the result established in \cite{elie2019contracting}, extends the link between principal--agent problems and BSDEs. It is well-known that in the case of a single agent, his problem can be solved by looking at a one-dimensional BSDE. In our setting with $N$ agents, we need to consider the $N$-dimensional BSDE defined above.

\begin{proposition} \label{prop:optimal-effort}
Fix a contract $(q,\phi,\xi)\in\Cc$. For any Nash equilibria $a^\star\in\text{\rm NE}(q,\phi,\xi)$ there exists a solution $(Y,Z)$ to {\rm BSDE} \eqref{eq:BSDE} such that $Z\in\Zc$ and we have, for every $i\in\{1,\dots,N\}$, $\mathrm{d}t\otimes\mathrm{d}\P$\text{\rm--a.e.} on $[0,T]\times\Omega$
\begin{equation}\label{eq:optimal-effort}
a^{\star,i}_s := a^{\star,i}(q_s,Z_s).
\end{equation}
Conversely, let $(Y,Z)$ be a solution to {\rm BSDE} \eqref{eq:BSDE} such that $Z\in\Zc$. Then, the joint action $a^\star$ defined by \eqref{eq:optimal-effort} belongs to $\text{\rm NE}(q,\phi,\xi)$.
\end{proposition}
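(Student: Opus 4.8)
The engine of the proof is the martingale optimality principle applied, player by player, to the best-response problem defining $V_0^i$. The plan is to attach to any candidate the certainty-equivalent process $Y^{y,q,Z}$ and to monitor, for every admissible unilateral deviation, the gain process
\[
G^i_t := \Uc_A\Big(Y^{y,q,Z,i}_t - \int_0^t \big(h_i(a^i_s)+c_i(q^i_s)\big)\mathrm{d}s\Big),\quad t\in[0,T],
\]
whose terminal value is exactly the argument of the expectation in $U^i_0$ and whose initial value is $\Uc_A(y^i)$. The entire statement then reduces to showing that, along the candidate $a^{\star,i}(q,Z)$, the process $G^i$ is a true $\P^a$-martingale for each $i$, while any unilateral deviation turns it into a $\P^a$-supermartingale.

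For the converse (sufficiency) direction I would start from a solution $(Y,Z)$ of \eqref{eq:BSDE} with $Z\in\Zc$, fix $i$ and an arbitrary $a^i\in\Ac_i$, and set $a:=a^i\otimes_i a^{\star,-i}$ with $a^\star$ given by \eqref{eq:optimal-effort} (which lies in $\Ac$ thanks to the continuity in Lemma \ref{lemma:a-star}). Writing the dynamics of $Y^{y,q,Z,i}$ under $\P^a$ via Girsanov, so that $\mathrm{d}W_t = \mathrm{d}W^a_t + \sigma^{-1}\sum_j (1-a^j_t)p_j(q^j_t)\mathrm{d}t$, and applying It\^o's formula to $\Uc_A$, the drift of $G^i$ equals $-\rho\,\Uc_A$ evaluated along $G^i$, times
\[
f^i(q_s,Z_s) - h_i(a^i_s) - c_i(q^i_s) + Z^i_s\sum_{j=1}^N(1-a^j_s)p_j(q^j_s) - \tfrac{\rho\sigma^2}{2}(Z^i_s)^2.
\]
Substituting the definition \eqref{eq:generator} of $f^i$ and using that only the $i$-th coordinate is deviated, this expression collapses to $h_i(a^{\star,i}_s)-h_i(a^i_s)+Z^i_s(a^{\star,i}_s-a^i_s)p_i(q^i_s)$, which is nonpositive precisely because $a^{\star,i}_s=a^{\star,i}(q_s,Z_s)$ minimises $a\mapsto h_i(a)-Z^i_s(1-a)p_i(q^i_s)$, and vanishes when $a^i=a^{\star,i}$ (uniqueness coming from the strict convexity of $h_i$ and Lemma \ref{lemma:a-star}). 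Since $\Uc_A<0$, $G^i$ is a local $\P^a$-supermartingale; the integrability built into $\Zc$ through \eqref{eq:classZ}, together with the boundedness of the costs by $\bar h$ and $\bar c$, upgrades it to a genuine supermartingale, and to a martingale for the non-deviated profile. Taking $\P^a$-expectations yields $U^i_0(a^i,a^{\star,-i},\xi^i,q)\le\Uc_A(y^i)$ with equality at $a^{\star,i}$, i.e. $a^{\star,i}$ is the best response for every $i$, so $a^\star\in\mathrm{NE}(q,\phi,\xi)$.

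For the direct (necessity) direction I would argue that, given $a^\star\in\mathrm{NE}(q,\phi,\xi)$, each player $i$ faces, with $a^{\star,-i}$ frozen, a standard exponential-utility drift-control problem. Its value process admits the multiplicative representation $\Uc_A(Y^i_t-\int_0^t(\cdots)\mathrm{d}s)$ for some pair $(Y^i,Z^i)$ solving the one-dimensional BSDE with terminal condition $\xi^i$ and generator obtained by minimising the bracket above over $a\in A_i$; the minimiser is the single-valued map $a^{\star,i}(q_s,Z^i_s)$ of Lemma \ref{lemma:a-star}, and optimality of $a^{\star,i}$ forces $a^{\star,i}_s=a^{\star,i}(q_s,Z^i_s)$ $\mathrm{d}s\otimes\mathrm{d}\P$-a.e. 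Stacking the $N$ scalar BSDEs and observing that, in equilibrium, $a^{\star,j}_s=a^{\star,j}(q_s,Z^j_s)$ for every $j$ makes each scalar generator coincide with $f^i(q_s,Z_s)$, so that $(Y,Z):=((Y^i)_i,(Z^i)_i)$ solves \eqref{eq:BSDE}; the $L^p$-control underlying $V_0^i=\Uc_A(y^i)$ then places $Z$ in $\Zc$.

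The main obstacle is this necessity direction: one must produce the scalar BSDE representation of each agent's value process and verify the required integrability. The generator is quadratic in $Z^i$, but the exponential change of variable $\Uc_A$ linearises the relevant estimates, and the admissibility class $\Zc$ is tailored exactly so that the local supermartingales above are of class $(D)$ after the Girsanov change of measure. Carrying this through---uniform integrability of $\{G^i_\tau\}_{\tau\in\Tc_{0,T}}$ via the $L^p$-bound in \eqref{eq:classZ} and de la Vall\'ee--Poussin---is where the real work lies, the coupling across players being harmless because a deviation of player $i$ perturbs only the $i$-th drift.
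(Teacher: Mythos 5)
Your converse (sufficiency) direction is complete and is essentially the paper's own argument: the same gain process, the same Girsanov/It\^o computation collapsing the drift to $h_i(a^{\star,i}_s)-h_i(a^i_s)+Z^i_s(a^{\star,i}_s-a^i_s)p_i(q^i_s)\le 0$, and the same upgrade from local supermartingale to true supermartingale (and martingale along $a^{\star,i}$) via the class-(D) property that the $L^p$-bound defining $\Zc$ provides; the paper carries out exactly this check with H\"older's inequality and moment bounds on the Girsanov densities.

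The gap is in the necessity direction, and you have located it yourself without filling it. Asserting that each player's best-response value process ``admits the multiplicative representation $\Uc_A\big(Y^i_t-\int_0^t(\cdots)\mathrm{d}s\big)$ for some pair $(Y^i,Z^i)$ solving the one-dimensional BSDE'' is precisely what must be proved, and it cannot be imported from off-the-shelf results on exponential-utility drift control: here the terminal condition only satisfies $\E[|\Uc_A(\xi^i)|^p]<\infty$ for \emph{some} $p>1$ (so no boundedness and no exponential moments of all orders), the generator is quadratic in $z^i$, and the frozen opponents' actions $a^{\star,-i}$ are arbitrary predictable processes, so quadratic-BSDE existence theory does not directly apply. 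The paper instead \emph{constructs} the pair from the value process: (a) a dynamic programming principle for the essential-supremum family $V^i(\tau,a^{\star,-i},\xi^i,q)$, with a careful uniform $L^{p_2}$ estimate over stopping times; (b) aggregation of the resulting supermartingale system into a single optional process; (c) the multiplicative decomposition of the negative martingale $R^{a^{\star,i}}$ combined with the predictable representation property under $\P^{a^\star}$ to extract $Z^i$; and (d) It\^o's formula on $Y^i=-\frac{1}{\rho}\log(-V^i)$ to identify the BSDE, after which the pointwise minimisation \eqref{eq:optimal-effort} follows from the supermartingale property of deviations, exactly as you sketch. Steps (a)--(c) are the real mathematical content of the forward implication; your proposal names the destination and the right auxiliary tools (uniform integrability, de la Vall\'ee--Poussin) but supplies none of this construction, so as it stands the necessity half is a plan rather than a proof.
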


The previous proposition allows us to reformulate the problem of the ISO \eqref{eq:ISO-problem} as a standard stochastic control problem. Indeed, we can represent the admissible remunerations as the terminal values of processes of the form $Y^{y,q,Z}$. Therefore, we have the equality
\begin{equation}\label{contracts-reformulation}
\Cc = \big\{ (q,\phi,Y_T^{y,q,Z}): (q,\phi)\in\Pc,\; y\in\R^n,\; Z\in\Zc\; \text{\rm with}\; \Uc_A(y^i)\geq R_0^i,\; \forall i\in\{1,\dots,N\}  \big\},
\end{equation}
which leads to the reformulation of problem \eqref{eq:ISO-problem-2}, stated in \Cref{th:main}. The controlled process $Y^{y,q,Z}$ allows to tackle the problem of the ISO by following the dynamic programming approach developed in \cite{cvitanic2018dynamic}. It plays the same role as the continuation utility process introduced in \cite{sannikov2008continuous}, with the difference that in our model it represents the certainty equivalent of the producers. The last result of this section illustrates this point and states that when a contract having the form \eqref{contracts-reformulation} is offered, the unique Nash equilibrium is given by the process \eqref{eq:optimal-effort}. 

\begin{proposition} \label{prop:contracts}
For $(q,\phi,y,Z)\in\Pc\times\R^n\times\Zc$, consider the contract $(q,\phi,\xi)\in\Cc$ with $\xi=Y_T^{y,q,Z}$. Then the joint effort $a^\star$, given by \eqref{eq:optimal-effort}, is the unique element in $\text{\rm NE}(q,\phi,\xi)$ and  $V_0^i(a^{\star,-i},\xi^i,q)=\Uc_A(y^i)$.
\end{proposition}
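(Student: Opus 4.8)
The converse part of \Cref{prop:optimal-effort} already settles existence: by construction the pair $(Y^{y,q,Z},Z)$ solves BSDE \eqref{eq:BSDE} with terminal value $\xi=Y_T^{y,q,Z}$ and $Z\in\Zc$, so the joint effort $a^\star$ defined through \eqref{eq:optimal-effort} belongs to $\mathrm{NE}(q,\phi,\xi)$. It therefore remains to prove the value identity $V_0^i(a^{\star,-i},\xi^i,q)=\Uc_A(y^i)$ and the uniqueness of the equilibrium. For the value identity the plan is a verification argument for the best-response problem of producer $i$ with the others frozen at $a^{\star,-i}$. Fix any $a^i\in\Ac_i$, set $X_t:=Y_t^{y,q,Z,i}-\int_0^t\big(h_i(a_s^i)+c_i(q_s^i)\big)\mathrm{d}s$ and $N_t:=\Uc_A(X_t)$, so that $N_0=\Uc_A(y^i)$ and $\E^{\P^{a^i\otimes_i a^{\star,-i}}}[N_T]$ is precisely the objective in \eqref{eq:producer-problem} associated with the control $a^i$.

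Rewriting the dynamics of $X$ under $\P^{a^i\otimes_i a^{\star,-i}}$ by Girsanov and inserting the explicit form \eqref{eq:generator} of $f^i$, the drift of $X$ collapses to $\psi^i_s(a^{\star,i}_s)-\psi^i_s(a^i_s)+\tfrac{\rho\sigma^2}{2}(Z^i_s)^2$, where $\psi^i_s(a):=h_i(a)-Z^i_s(1-a)p_i(q^i_s)$ is the map minimised in \eqref{eq:a-star-appendix}; here the coupling term $z^i\sum_j(1-a^{\star,j})p_j$ in $f^i$ cancels against the Girsanov drift, leaving only the $i$-th discrepancy $(a^{\star,i}-a^i)p_i$. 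Applying Itô's formula to $N=-\mathrm{e}^{-\rho X}$, the two $\tfrac{\rho\sigma^2}{2}(Z^i)^2$ contributions cancel against the second-order correction, giving
\[
\mathrm{d}N_t=\rho\,\mathrm{e}^{-\rho X_t}\big(\psi^i_t(a^{\star,i}_t)-\psi^i_t(a^i_t)\big)\,\mathrm{d}t+\rho\,\mathrm{e}^{-\rho X_t}Z^i_t\sigma\,\mathrm{d}W_t^{a^i\otimes_i a^{\star,-i}}.
\]
By \Cref{lemma:a-star} the point $a^{\star,i}_t$ is the unique minimiser of $\psi^i_t$, so the drift is nonpositive and vanishes precisely when $a^i_t=a^{\star,i}_t$; hence $N$ is a local supermartingale, and a local martingale when $a^i=a^{\star,i}$.

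The remaining step is to upgrade these local statements to genuine (super)martingale ones, which is where the integrability built into $\Zc$ in \eqref{eq:classZ} enters: the bound $\sup_{\tau\in\Tc_{0,T}}\E[|\Uc_A(Y_\tau^{0,q,Z,i})|^p]<\infty$ for some $p>1$, combined with the boundedness of $a^i$, $q$ and $h_i$, supplies the uniform integrability needed to control the family $\{N_\tau\}_{\tau}$ and to make the stochastic exponential driving $N$ a true martingale under $\P^{a^i\otimes_i a^{\star,-i}}$. Once secured, taking expectations yields $\E^{\P^{a^i\otimes_i a^{\star,-i}}}[N_T]\le N_0=\Uc_A(y^i)$ for every $a^i$, with equality at $a^i=a^{\star,i}$; the supremum over $a^i$ then gives $V_0^i(a^{\star,-i},\xi^i,q)=\Uc_A(y^i)$ and re-confirms that $a^{\star,i}$ is a best response.

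For uniqueness, let $\bar a\in\mathrm{NE}(q,\phi,\xi)$ be arbitrary. The direct part of \Cref{prop:optimal-effort} produces a solution $(\bar Y,\bar Z)$ of BSDE \eqref{eq:BSDE} with $\bar Z\in\Zc$ and $\bar a^i=a^{\star,i}(q,\bar Z)$. Since $(Y^{y,q,Z},Z)$ is also a solution of the same BSDE with the same terminal datum $\xi$, and since $a^{\star,i}(q,\cdot)$ is single-valued and continuous by \Cref{lemma:a-star}, uniqueness of the equilibrium is equivalent to uniqueness of the solution of BSDE \eqref{eq:BSDE} in the class $\Zc$. I expect this to be the main obstacle: the generator is quadratic in $z$ and the system is multidimensional, a regime where uniqueness fails without extra structure. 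The structure to exploit is that the components of $f$ are coupled only through the common drift $\sum_{j}(1-a^{\star,j}(q,z))p_j(q^j)$; absorbing this term by passing to $\P^{a^\star}$ decouples the system into $N$ scalar quadratic BSDEs whose terminal conditions carry the exponential moments encoded in $\Zc$, for which scalar quadratic BSDE theory with unbounded terminal data delivers uniqueness. The delicate points, namely making rigorous a change of measure that depends on the unknown solution and verifying the true-martingale property of the associated exponentials, are precisely those treated in \cite{elie2019contracting}, and I would follow that route.
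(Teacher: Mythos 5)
Your existence step and your verification of the value identity are correct and coincide with the paper's own argument: the paper's proof of this proposition simply invokes the proof of \Cref{prop:optimal-effort}$(ii)$, where the process you call $N$ (there denoted $U^{a^i}$) is shown, via the same drift computation (cancellation of the coupling term $z^i\sum_j(1-a^{\star,j})p_j$ by Girsanov and of the quadratic term $\tfrac{\rho\sigma^2}{2}(Z^i)^2$ by It\^o), to be a local supermartingale of class (D) under $\P^{a^i\otimes_i a^{\star,-i}}$, hence a true supermartingale, and a true martingale at $a^i=a^{\star,i}$; the integrability is obtained exactly as you indicate from \eqref{eq:classZ} and the boundedness of $a^i$, $q$, $h_i$, $c_i$.

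The genuine gap is in your uniqueness step, and it is twofold. First, what you offer is a programme, not a proof: you reduce uniqueness of the equilibrium to uniqueness of solutions of the multidimensional quadratic BSDE \eqref{eq:BSDE} in the class $\Zc$, declare this the main obstacle, and defer it to \cite{elie2019contracting}. Second, the reduction does not work as sketched: the decoupling change of measure to $\P^{a^\star}$ depends on the solution itself, so two candidate solutions $(\bar Y,\bar Z)$ and $(Y^{y,q,Z},Z)$ come with two \emph{different} equilibrium measures, and scalar quadratic BSDE theory under either one of them says nothing about their difference; this is precisely the standard obstruction to uniqueness for coupled quadratic systems, and citing \cite{elie2019contracting} does not remove it for the present system. (Your claimed equivalence is also too strong: BSDE uniqueness is sufficient for equilibrium uniqueness but not necessary, since $z\mapsto a^{\star,i}(q,z)$ involves a minimisation over the compact set $A_i$ and need not be injective.) The paper's uniqueness argument is entirely different and never touches BSDE uniqueness: by \Cref{lemma:a-star} the minimiser in \eqref{eq:a-star-appendix} is unique (strict convexity of $h_i$), so in your supermartingale inequality the drift of $N$ is strictly negative on any set of positive $\mathrm{d}t\otimes\mathrm{d}\P$ measure where $a^i\neq a^{\star,i}$; hence $U_0^i(a^i,a^{\star,-i},\xi^i,q)=\Uc_A(y^i)$ forces $a^i=a^{\star,i}$, and this strictness of the best response is what the paper invokes to conclude that $a^\star$ is the unique element of $\mathrm{NE}(q,\phi,\xi)$. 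You may fairly observe that this one-line argument only pins down deviations against $a^{\star,-i}$, so your instinct that uniqueness of the whole Nash set is delicate is not unreasonable; but the paper's proof is this elementary strictness argument, and your proposal neither reproduces it nor completes the much heavier alternative you set up.
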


\subsection{Proofs}\label{ap:proofs}

\begin{proof}[Proof of {\rm Lemma \ref{lemma:a-star}}]
Since the set $A_i$ is compact and the maps $h_i$ and $p_i$ are continuous, the map 
\[
a \longmapsto h_i(a) - z^i (1-a)p_i(q^i),
\]
has a minimiser over $A_i$. The uniqueness of the minimiser follows from the strict convexity of $h_i$. From the maximum Theorem (see for instance \cite[Theorem 17.31]{aliprantis2006infinite}) the function $a^{\star,i}$ is continuous.
\end{proof}

\begin{proof}[Proof of {\rm Proposition \ref{prop:optimal-effort}}]

{\color{red}(i)} Let $a^\star\in \text{\rm NE}(q,\phi,\xi)$ be a Nash equilibrium for the contract $(q,\phi,\xi)\in\Cc$. Then for each player $i\in\{i,\dots,N\}$ the action $a^{\star,i}$ maximises his expected utility, given the actions of the others $a^{\star,-i}$
\[
U_0^i(a^{\star,i},a^{\star,-i},\xi^i,q) = V_0^i(a^{\star,-i},\xi^i,q)  = \sup_{a\in\Ac_i}   U_0^i(a,a^{\star,-i},\xi^i,q).
\]
We fix $a^{\star,-i}$ and define the following family of random variables, for $\tau\in\Tc_{0,T}$
\[
V^i(\tau,a^{\star,-i},\xi^i,q):= \esssup_{a^i \in\Ac_i} \E^{\P^{a^i \otimes_i a^{\star,-i}}} \bigg[  \Uc_A \bigg( \xi^i - \int_\tau^T \big( h_i(a_s^i) + c_i(q_s^i) \big) \mathrm{d}s \bigg)  \bigg | \Fc_\tau\bigg], \;\P\text{\rm--a.s.}
\]
Let us discuss the integrability of this family, which is inherited from the integrability of $\xi\in\Rc$ because the processes $a^i$ and $q^i$ are bounded. From the definition of the set $\Ac_i$, it is clear that the family $(U^i(\tau,a^i,a^{\star,-i},\xi^i,q))_{a^i \in\Ac_i} $, with
\[
U^i(\tau,a^i,a^{\star,-i},\xi^i,q)) :=  \E^{\P^{a^i \otimes_i a^{\star,-i}}} \bigg[  \Uc_A \bigg( \xi^i - \int_\tau^T \big( h_i(a_s^i) + c_i(q_s^i) \big) \mathrm{d}s \bigg)  \bigg | \Fc_\tau\bigg],
\]
is directed upwards. Therefore, there exists a sequence $(a^i_n)_{n\in\N}$ in $\Ac_i$ such that $(U^i(\tau,a^i_n,a^{\star,-i},\xi^i,q))_{n\in\N}$ is non-decreasing and $V^i(\tau,a^{\star,-i},\xi^i,q)=\lim_{n\to\infty} U^i(\tau,a^i_n,a^{\star,-i},\xi^i,q))$. From the monotone convergence theorem we have for every $\hat p >1$
\[
\E \big[ |V^i(\tau,a^{\star,-i},\xi^i,q)|^{\hat{p}} \big] = \lim_{n\to\infty} \E \big[ |U^i(\tau,a^i_n,a^{\star,-i},\xi^i,q)) |^{\hat p} \big].
\]
Take $p>1$ such that $\E[|\Uc_A(\xi^i)|^p]<\infty$, then by H\"{o}lder's inequality and since the density measures have moments of any order we have for every $p_1\in(1,p)$ and $p_2\in(1,p_1)$ 
\begin{align*}
 \E \big[ |U^i(\tau,a^i_n,a^{\star,-i},\xi^i,q)) |^{p_2} \big] & \leq  \E^{\P^{n}} \big[ |U^i(\tau,a^i_n,a^{\star,-i},\xi^i,q)) |^{p_1} \big]^\frac{p_2}{p_1}  ~  \E^{\P^{n}}  \bigg[  \bigg( \frac{\textrm{d}\P}{\textrm{d}\P^n}  \bigg)^{p_3} \bigg]^\frac{1}{p_3}\\
 & \leq  \E^{\P^{n}}  \bigg[  \bigg| \Uc_A \bigg( \xi^i - \int_\tau^T \big( h_i(a_s^i) + c_i(q_s^i) \big) \mathrm{d}s \bigg) \bigg|^{p_1}  \bigg]^\frac{p_2}{p_1}  \mathrm{e}^{\frac{(p_3-1)TN^2\bar{p}^2}{2 \sigma^2}} \\
 & \leq \E^{\P^{n}}  \big[|\Uc_A(\xi^i)|^{p_1} \big]^\frac{p_2}{p_1}  \mathrm{e}^{\rho T(\bar{h} + \bar{c})p_2 }   \mathrm{e}^{\frac{(p_3-1)TN^2\bar{p}^2}{2 \sigma^2}} \\
 & \leq \E \big[|\Uc_A(\xi^i)|^{p} \big]^\frac{p_2}{p}    \E \bigg[  \bigg( \frac{\textrm{d}\P^n}{\textrm{d}\P}  \bigg)^{p_4} \bigg]^\frac{p_2}{p_4 p_1}   \mathrm{e}^{\rho T(\bar{h} + \bar{c})p_2 }  \mathrm{e}^{\frac{(p_3-1)TN^2\bar{p}^2}{2 \sigma^2}} \\
 & \leq  \E \big[|\Uc_A(\xi^i)|^{p} \big]^\frac{p_2}{p}     \mathrm{e}^{\frac{(p_4-1)TN^2\bar{p}^2 p_2}{2 \sigma^2 p_1}}     \mathrm{e}^{\rho T(\bar{h} + \bar{c})p_2 }     \mathrm{e}^{\frac{(p_3-1)TN^2\bar{p}^2}{2 \sigma^2}},
\end{align*}
where $\P^n$ denotes $\P^{a_n^i \otimes_i a^{\star,-i}}$, $\bar{h}$, $\bar{c}$ and $\bar{p}$ are defined in \eqref{eq:constants}, $p_3$ and $p_4$ are the H\"{o}lder's conjugates of $\frac{p_1}{p_2}$ and $\frac{p}{p_1}$ respectively. It follows that 
\begin{equation}\label{eq:uniformly-integrable}
\sup_{\tau \in\Tc_{0,T}} \E \big[|V^i(\tau,a^{\star,-i},\xi^i,q)|^{p_2}\big]<\infty.
\end{equation}

We have next the following dynamic programming principle (see for instance \cite[Theorem 3.4]{karoui2013capacities2})\footnote{The assumptions of the theorem are satisfied by the continuity of all the functions in our model (see \cite[Remark 3.10]{karoui2013capacities2}).}, for any $\theta\in\Tc_{0,T}$ such that $\tau\leq\theta$ it holds $\P$--a.s.
\[
V^i(\tau,a^{\star,-i},\xi^i,q) = \esssup_{a^i\in \Ac_i} \E^{\P^{a^i \otimes_i a^{\star,-i}}} \bigg[ V^i(\theta,a^{\star,-i},\xi^i,q)   \mathrm{e}^{\rho \int_\tau^\theta ( h_i(a_s^i) + c_i(q_s^i) ) \mathrm{d}s}  \bigg | \Fc_\tau\bigg].
\]
Then, for every $a^i\in\Ac^{-i}$ the family $\big(V^i(\tau,a^{\star,-i},\xi^i,q)  \mathrm{e}^{\rho \int_0^\tau ( h_i(a_s^i) + c_i(q_s^i) ) ds}\big)_{\tau\in\Tc_{0,T}}$ is a $\P^{a^i \otimes_i a^{\star,-i}}$--supermartingale\footnote{The family is integrable since $a^i$ and $q$ are bounded and \Cref{eq:uniformly-integrable} holds.} system (see \cite[Section 3.3]{bouchard2016general} or \cite[Definition 10]{della1981sur}). From \cite[Theorem 15]{della1981sur}, the family of random variables can be aggregated in a unique $\F$-optional process $V^i(a^{\star,-i},\xi^i,q)$ which satisfies
\[
V_t^i(a^{\star,-i},\xi^i,q)= \esssup_{a^i \in \Ac_i} \E^{\P^{a^i \otimes_i a^{\star,-i}}} \bigg[ \Uc_A \bigg( \xi^i - \int_t^T  \big( h_i(a_s^i) + c_i(q_s^i) \big) \mathrm{d}s \bigg)  \bigg | \Fc_t\bigg],\; \P\text{\rm--a.s.},\; t\in[0,T].
\]
Notice that the value at time $0$ of this process coincides with the value function of the agent. Define next, for every $a^i\in\Ac_i$, the following process which is a uniformly integrable $\P^{a^i \otimes_i a^{\star,-i}}$-supermartingale\footnote{Again, integrability follows from \eqref{eq:uniformly-integrable} and the fact that processes $a^i$ and $q$ being bounded.} and whose value at time $0$ is independent of $a^i$
\[
R_t^{a^i}(a^{\star,-i},\xi^i,q) := \mathrm{e}^{ \rho \int_0^t  ( h_i(a_s^i) + c_i(q_s^i)) \mathrm{d}s} V_t^i(a^{\star,-i},\xi^i,q),\; t\in[0,T].
\]
Then we have, for any $t\in[0,T]$
\begin{align*}
V_0^i(a^{\star,-i},\xi^i,q)  =R_0^{a^{\star,i}}(a^{\star,-i},\xi^i,q) & \geq \E^{\P^{a^{\star}}} [R_t^{a^{\star,i}}(a^{\star,-i},\xi^i,q)] \\
& \geq  \E^{\P^{a^{\star}}} [R_T^{a^{\star,i}}(a^{\star,-i},\xi^i,q)] \\
& = \E^{\P^{a^{\star}}} \bigg[ \Uc_A \bigg( \xi^i - \int_0^T \big( h_i(a_s^{\star,i}) + c_i(q_s^i) \big) \mathrm{d}s \bigg) \bigg]  = V_0^i(a^{\star,-i},\xi^i,q),
\end{align*}
and it follows that the process $R^{a^{\star,i}}(a^{\star,-i},\xi^i,q)$ is a uniformly integrable $\P^{a^{\star}}$-martingale. Since the filtration $\F$ satisfies the usual conditions, we consider from now on a c\`adl\`ag $\P$-modification of this process.

\medskip
From the multiplicative decomposition of negative martingales (see for instance \cite[Equation (8)]{yoeurp1976decomposition}) and the martingale representation theorem\footnote{Notice that $\P^{a^{\star}}$ satisfies the martingale representation property, due to Theorem III.5.24 in \cite{jacod2003limit}.}, there exists a predictable process $Z^i \in \mathbb{H}^2_\text{\rm loc}(\R)$ such that
\[
R_t^{a^{\star,i}}(a^{\star,-i},\xi^i,q) = V_0^i(a^{\star,-i},\xi^i,q) \Ec\bigg(  -\rho \int_0^t  Z_s^i \sigma \mathrm{d}W_s^{a^\star}\bigg).
\]
By applying It\^o's formula and using the definition of $R^{a^{\star,i}}(a^{\star,-i},\xi^i,q)$, we obtain that
\begin{align*}
\mathrm{d}V_t^i(a^{\star,-i},\xi^i,q) = &  \bigg( -\rho \big( h(a_t^{\star,i}) + c_i(q_t^i) \big) V_t^i(a^{\star,-i},\xi^i,q) + \rho V_t^i(a^{\star,-i},\xi^i,q)  Z_t^i \sum_{j=1}^N  (1-a^{\star,j}_t)p_j(q^j_t)   \bigg)\mathrm{d}t \\
& -\rho V_t^i(a^{\star,-i},\xi^i,q) Z_t^i\sigma \mathrm{d}W_t.
\end{align*}
Finally, defining $Y^{i} = -\frac{1}{\rho} \log(-V^i(a^{\star,-i},\xi^i,q))$, $Y:=(Y^1,\dots,Y^N)^\top$, $Z:=(Z^1,\dots,Z^N)^\top$ and applying It\^o's formula we have that the pair $(Y,Z)$ is a solution to BSDE \eqref{eq:BSDE}. Note also that $Z\in\Zc$ because $Y^{0,q,Z,i}=Y^i-Y_0^i$, with $Y_0^i\in\R$ and $\Uc_A(Y_t^i)=V_t^i(a^{\star,-i},\xi^i,q)$.

\medskip
To conclude, we show that $a^\star$ satisfies Condition \eqref{eq:optimal-effort}. Notice that from definition, for every $a^i\in\Ac_i$ we have
\begin{align*}
R_t^{a^i}(a^{\star,-i},\xi^i,q)  &= R_t^{a^{\star,i}}(a^{\star,-i},\xi^i,q) \mathrm{e}^{ -\rho \int_0^t  (h_i(a^{\star,i}_s)-h_i(a_s^i)) \mathrm{d}s}  \\
& = V_0^i(a^{\star,-i},\xi^i,q) - \int_0^t  \rho R_s^{a^{\star,i}} \mathrm{e}^{ -\rho \int_0^s  (h_i(a^{\star,i}_u)-h_i(a_u^i)) \mathrm{d}u}   Z_s^i  \big(\sigma \mathrm{d}W^{a^i\otimes_i a^{\star,-i}}_s- (a^i_s-a^{\star,i}_s)p_i(q^i_s) \mathrm{d}s\big) \\ 
&\quad  -  \int_0^t   \rho R_s^{a^{\star,i}} \mathrm{e}^{ -\rho \int_0^s  (h_i(a^{\star,i}_u)-h_i(a_u^i)) \mathrm{d}u}  (h_i(a^{\star,i}_s)-h_i(a_s^i)) \mathrm{d}s.
\end{align*}
Since $R^{a^i}(a^{\star,-i},\xi^i,q)$ is a (negative) $\P^{a^i \otimes_i a^{\star,-i}}$-supermartingale, we conclude that for every $a^i$ we have $\mathrm{d}t\otimes\mathrm{d}\P$\text{\rm--a.e.} on $[0,T]\times\Omega$
\[
h_i(a^{\star,i}_s) + Z_s^i a^{\star,i}_s p_i(q^i_s) \leq h_i(a_s^i) + Z_s^i a^i_s p_i(q^i_s),
\]
which implies \eqref{eq:optimal-effort}. 

\medskip
For the second part of the proof, let $(Y,Z)$ be a solution to BSDE \eqref{eq:BSDE} such that $Z\in\Zc$. Define the process $a^\star\in\Ac$, $\mathrm{d}t\otimes\mathrm{d}\P$\text{\rm--a.e.} on $[0,T]\times\Omega$, by
\[
a^{\star,i}_s := a^{\star,i}(q_s,Z_s), \; i\in\{1,\dots,N\}.
\]
Next, fix any firm $i$ and define for any effort $a^i\in\Ac_i$ the process
\[
U_t^{a^i} := \Uc_A\bigg(Y_t^i -\int_0^t \big( h_i(a_s^i) + c_i(q_s^i) \big) \textrm{d}s \bigg),\; t\in[0,T],
\]
which is a $\P^{a^i \otimes_i a^{\star,-i}}$-integrable process of class (D). Indeed, let $p$ be such as in \eqref{eq:classZ} and notice that $Y^i = Y^{0,q,Z,i}+Y_0^i$, with $Y_0^i\in\R$. Then we have, for every $\hat p_1\in(1,p)$
\begin{align*}
\E^{\P^i}\Big[ \big|U_\tau^{a^i} |^{p_1} \Big]  & \leq \E\Big[ \big|U_\tau^{a^i} |^p \Big]^\frac{p_1}{p}  \E\bigg[ \bigg( \frac{\textrm{d}\P^i}{\textrm{d}\P} \bigg)^{p_2} \bigg]^\frac{1}{p_2}  
 \leq \E\Big[ \big|\Uc_A(Y_\tau^{0,q,Z,i})\big|^p \Big]^\frac{p_1}{p}       \mathrm{e}^{\rho T(\bar{h} + \bar{c})p_1 }      \mathrm{e}^{-\rho Y_0^i p_1}     \mathrm{e}^{\frac{(p_2-1)TN^2\bar{p}^2 }{2 \sigma^2}}  ,
\end{align*}
where we denoted $\P^i = \P^{a^i \otimes_i a^{\star,-i}}$, and $p_2$ is the H\"older's conjugate of $\frac{p}{p_1}$. It follows then that
\[
\sup_{\tau \in\Tc_{0,T}} \E^{\P^{a^i \otimes_i a^{\star,-i}}}\Big[ \big| U_\tau^{a^i} \big|^{p_1} \Big] < \infty.
\] 
By It\^o's formula, we have
\begin{align*}
\mathrm{d}U_t^{a^i} 
& = -\rho U_t^{a^i} \big( h_i(a^{\star,i}_t) + Z_t^i a^{\star,i}_t p_i(q^i_t) - h_i(a_t^i) - Z_t^i  a^i_t p_i(q^i_t) \big) \textrm{d}t - \rho U_t^{a^i} \sigma Z_t^i \textrm{d}W_t^{a^i\otimes_i a^{\star,-i}}.
\end{align*}

It follows then that that the process $U^{a^i}$ is a $\P^{a^i \otimes_i a^{\star,-i}}$-local supermartingale\footnote{It is the stochastic exponential of a continuous semimartingale.} of class (D), hence a supermartingale,
whose value at time $0$ is independent of $a^i$. By the same argument, we have that 
$U^{a^{\star,i}}$ is a $\P^{a^{\star}}$-martingale
, so that
\[
U_0^i(a^i,a^{\star,-i},\xi^i,q) = \mathbb{E}^{\P^{a^i \otimes_i a^{\star,-i}}}\big[ U_T^{a^i} \big] \leq U_0^{a^i} = U_0^{a^{\star,i}} = \mathbb{E}^{\P^{a^{\star}}}\big[ U_T^{a^{\star,i}} \big] = U_0^i(a^{\star,i},a^{\star,-i},\xi^i,q),
\]
which means the action $a^{\star,i}$ is the best-reaction for producer $i$, given the action of the others $a^{\star,-i}$.

\end{proof}

\begin{proof}[Proof of {\rm Proposition \ref{prop:contracts}}]

It follows from the proof of \Cref{prop:optimal-effort}$(ii)$, since the pair $(Y^{y,q,Z},Z)$ is a solution to BSDE \eqref{eq:BSDE}, with $Z\in\Zc$. It is also proved that for every $a^i\in\Ac_i$ $ U_0^i(a^i,a^{\star,-i},\xi^i,q) \leq U_0^i(a^{\star,i},a^{\star,-i},\xi^i,q)=\Uc_A(y^i) $. This implies that $a^\star$ is a Nash equilibrium and $V_0^i(a^{\star,-i},\xi^i,q)=\Uc_A(y^i) $. Moreover, equality is attained only at the control $a^{\star,i}$ because of the uniqueness of the minimiser in \eqref{eq:a-star-appendix}.

\end{proof}

\section{Value function of the ISO} \label{app:iso}

\begin{proof}[Proof of {\rm Proposition \ref{prop:value-function-2}}]

Note that, for any $(q,\phi, y,Z)\in\hat\Cc$, the dependence of the objective function on the process $Y^{y,q,Z}$ is only through the sum of the initial values, that is the coordinates of $y\in\mathbb{R}^n$. By plugging $Y_T^{y,q,Z}$ into \eqref{eq:V0-first}, it readily follows then 
\begin{align*}
V_0 
&= \inf_{(q,\phi,Z)\in\Pc\times \Zc}   \E^{\P^{a^\star(q,Z)}} \bigg[ \sum_{i=1}^N \int_0^T  \left( h_i(a^{\star,i}(q_s^i,Z_s^i))+\frac{\rho\sigma^2}{2} (Z_s^i)^2+ 2c_i(q^i_s)  \right) \mathrm{d}s + \int_0^T \Lambda( L_s - \ell_0) \mathrm{d}s  \bigg] - \sum_{i=1}^N \frac{1}{\rho}\log(-R_0^i).
\end{align*}

\end{proof}

In the following, we list a series of propositions which will result in Theorem \ref{thr-iso-main}.

\begin{proposition}\label{prop-solution-pde}
There exists a unique viscosity solution to the {\rm HJB} equation \eqref{eq:pde-value-function} which has at most polynomial growth at infinity. Such solution is continuously differentiable in the space variable, with bounded derivative.
\end{proposition}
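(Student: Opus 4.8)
The plan is to view \eqref{eq:pde-value-function} as a uniformly parabolic semilinear equation and to establish, in turn, a comparison principle, existence via explicit barriers, and interior regularity. Setting $\mathcal{H}(\alpha):=\inf_{(z,q,\phi)\in\R^{N}\times\hat P}g(\alpha,z,q,\phi)$, the equation becomes $v_t+\tfrac12\sigma^2v_{\ell\ell}+\mathcal{H}(v_\ell)+\Lambda(\ell-\ell_0)=0$ with terminal condition $v(T,\cdot)=0$; the second-order part is the \emph{nondegenerate} constant-coefficient operator $\tfrac12\sigma^2\partial_{\ell\ell}$, and the whole nonlinearity is carried by the first-order term. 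The first step is to record the structure of $\mathcal{H}$. For fixed $(z,q,\phi)$ the map $\alpha\mapsto g(\alpha,z,q,\phi)$ is affine with slope $\sum_{i=1}^N(1-a^{\star,i}(q,z))p_i(q^i)\in[0,N\bar p]$, so $\mathcal{H}$ is a concave, globally Lipschitz function of $\alpha$ with constant at most $N\bar p$; the infimum is attained thanks to the coercivity in $z$ coming from the $\tfrac{\rho\sigma^2}{2}|z|^2$ term, the compactness of $\hat P$, and the continuity of $a^{\star,i}$ established in \Cref{lemma:a-star} (this is the content of \Cref{lemma-optimizers}).

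Second, I would obtain uniqueness through a comparison principle in the class of functions of polynomial growth. The argument is the usual doubling of variables, modified in two ways to cope with the unbounded domain $\R$ and the quadratic growth of $\Lambda$: a penalisation term growing strictly faster at infinity than the competing sub- and supersolutions localises the maximum, and an exponential weight $\mathrm{e}^{\theta(T-t)}$ absorbs the zeroth-order contributions. The Lipschitz continuity of $\mathcal{H}$ in its gradient argument is precisely what keeps the first-order terms under control as the two maximising points are merged, so the comparison inequality closes in the standard fashion and at most one polynomial-growth viscosity solution can exist.

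Third, existence with polynomial growth I would obtain by Perron's method between explicit barriers. Since $\Lambda\geq0$ and $\mathcal{H}(0)\geq0$, the constant function $0$ is a subsolution, while a supersolution is furnished by $w(t,\ell)=(T-t)\big(A+B|\ell-\ell_0|^2\big)$: using $\mathcal{H}(\alpha)\leq\mathcal{H}(0)+N\bar p|\alpha|$ and the quadratic bound $\Lambda(x)\leq K_\Lambda(1+|x|^2)$, one checks that for $B>K_\Lambda$ and $A$ large enough the supersolution inequality holds, the quadratic term dominating the linear contribution of $\mathcal{H}$. Perron's method then produces a viscosity solution squeezed between $0$ and $w$, which therefore has at most quadratic, hence polynomial, growth.

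Finally, for the regularity I would exploit the uniform parabolicity together with the translation structure of the problem. Interior $L^p$ and Schauder estimates for the heat operator, applied after freezing $\mathcal{H}(v_\ell)+\Lambda(\ell-\ell_0)$ as a continuous source and bootstrapping, upgrade the viscosity solution to a classical $C^{1,2}$ solution on $[0,T)\times\R$. For the bound on $v_\ell$ I would use that every coefficient of the equation except $\Lambda(\ell-\ell_0)$ is independent of $\ell$: formally differentiating in $\ell$ shows that $w:=v_\ell$ solves the \emph{linear} uniformly parabolic equation $w_t+\tfrac12\sigma^2w_{\ell\ell}+\mathcal{H}'(v_\ell)w_\ell+\Lambda'(\ell-\ell_0)=0$ with $w(T,\cdot)=0$, whose Feynman--Kac representation $w(t,\ell)=\E\big[\int_t^T\Lambda'(X_s-\ell_0)\,\mathrm{d}s\big]$ (for a diffusion $X$ of volatility $\sigma$ and bounded drift $\mathcal{H}'(v_\ell)\in[0,N\bar p]$) yields the bound once $\Lambda'$ is controlled. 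The main obstacle I anticipate is exactly this last point: $\mathcal{H}$ is only Lipschitz, not $C^1$, so the differentiated equation has to be justified through difference quotients (the translation argument on $v(t,\ell+\delta)-v(t,\ell)$, which the state-independence of the drift makes available) combined with the interior estimates; and since $\Lambda'$ has at most linear growth in general, the boundedness of $v_\ell$ genuinely relies on $\Lambda'$ being bounded, as holds for the cost functions used later (for instance $\Lambda(x)=\lambda(x)_+$).
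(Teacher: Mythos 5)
Your architecture is correct but follows a genuinely different, purely PDE-analytic route from the paper's, which is probabilistic throughout. The paper does not use Perron's method, doubling of variables, or Schauder theory: it delegates uniqueness to the comparison result of \cite[Theorem 6.106]{pardoux2014stochastic} after checking exactly the Lipschitz bound $N\bar p$ (in rescaled form $N\bar p/\sigma$) that you derive for the Hamiltonian, and it obtains existence \emph{and} the spatial $C^1$ regularity in one stroke from the FBSDE representation theorem of \cite[Theorem 3.1 and Corollary 3.2]{ma2002representation}, realising the solution as $\tilde v(t,\ell)=\tilde Y_t^{t,\ell}$ for a BSDE whose driver is the Hamiltonian. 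The key analytic input there is that the Hamiltonian is continuously differentiable in $\alpha$ with bounded derivative, proved via the Milgrom--Segal envelope theorem \cite[Theorem 2]{milgrom2002envelope} together with upper-hemicontinuity of the argmin correspondence (\Cref{lemma-optimizers}); your differentiated equation $w_t+\tfrac12\sigma^2 w_{\ell\ell}+\mathcal H'(v_\ell)w_\ell+\Lambda'(\ell-\ell_0)=0$ with $w=v_\ell$ is precisely the PDE counterpart of that envelope computation, with $\mathcal H'(v_\ell)\in[0,N\bar p]$ playing the role of the bounded derivative of the driver. Your route buys self-containedness and makes the mechanism behind the gradient bound transparent; the paper's buys brevity and sidesteps the regularity bootstrap entirely.

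Two caveats. First, a circularity in your bootstrap as written: you cannot ``freeze $\mathcal H(v_\ell)+\Lambda(\ell-\ell_0)$ as a continuous source'' before knowing that $v_\ell$ exists and is continuous. The correct first step is an interior $C^{1,\alpha}$ estimate for viscosity solutions of uniformly parabolic equations with Lipschitz gradient nonlinearity, or the difference-quotient/translation argument you mention (exploiting that only $\Lambda$ depends on $\ell$); only then does Schauder apply. Second, your concern about the gradient bound is not merely a caveat --- it identifies a real defect of the statement under the paper's standing assumptions. If the polluting functions vanish, $\mathcal H$ is constant, and with $\Lambda(x)=x^2$ (which is $C^1$ with quadratic growth, hence admissible) the unique polynomial-growth solution is $v(t,\ell)=(T-t)(\ell-\ell_0)^2+\tfrac{\sigma^2}{2}(T-t)^2+\mathcal H(0)(T-t)$, whose derivative $v_\ell=2(T-t)(\ell-\ell_0)$ is unbounded. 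So boundedness of $v_\ell$ genuinely requires $\Lambda'$ bounded, exactly as you say. This is not a weakness of your approach relative to the paper's: the same hypothesis is hidden in the paper's invocation of \cite{ma2002representation}, whose standing assumption (A1) requires the driver to be uniformly Lipschitz in the state variable --- false for quadratic $\Lambda$. Your proposal makes explicit an assumption the paper uses silently.
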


\begin{proof}[Proof of {\rm Proposition \ref{prop-solution-pde}}]  

$(i)$ We start with the uniqueness of such solutions. Define $\tilde F:\R\times\R \longrightarrow \R$ by 
\[
\tilde F(\ell,\alpha)  = \inf_{(z,q,\phi)\in\R^{N}\times \hat{P} } \bigg\{  \sum_{i=1}^N \bigg( \alpha  (1-a^{\star,i}(q,z)) \frac{p_i(q^i)}{\sigma} +  h_i(a^{\star,i}(q,z))+\frac{\rho\sigma^2}{2} (z^i)^2+2c_i(q^i) \bigg)\bigg\}  + \Lambda(\ell - \ell_0)  .
\]
We have $|\tilde F(\ell,0)| \leq N (\bar{h} + \bar{c}) + |\Lambda(\ell - \ell_0)|$, with $\bar{h}$ and $\bar{c}$ defined in \eqref{eq:constants}. Thus $\tilde F(\ell,0)$ has polynomial growth, and
\begin{align*}
|\tilde F(\ell,\alpha_1) - \tilde F(\ell,\alpha_2)| & = \bigg| \inf_{(z,q,\phi)\in\R^{N}\times \hat{P} } g(\alpha_1 / \sigma,z,q,\phi) - \inf_{(z,q,\phi)\in\R^{N}\times \hat{P} } g(\alpha_2 / \sigma,z,q,\phi)   \bigg| \\
& \leq \sup_{(z,q,\phi)\in\R^{N}\times \hat{P} } \big |  g(\alpha_1/ \sigma,z,q,\phi) -  g(\alpha_2/ \sigma,z,q,\phi) \big|  \leq |\alpha_1 - \alpha_2 | N \frac{\bar{p}}{\sigma},
\end{align*}
where $\bar{p}$ is defined in \eqref{eq:constants}. Therefore, Assumption (A2) from \citeauthor*{pardoux2014stochastic} \cite{pardoux2014stochastic} is satisfied and the uniqueness of viscosity solutions to \eqref{eq:pde-value-function} with polynomial growth follows from \cite[Theorem 6.106]{pardoux2014stochastic}. 

\medskip
$(ii)$ For the existence, we start by proving that $\tilde F$ is continuously differentiable in $\alpha$, with bounded derivative. Let us denote by $\tilde \mu: \R \rightrightarrows \R^N \times \hat P$ the correspondence of optimisers in $g$, as follows
\[
\tilde \mu (\alpha) = \bigg\{ (\tilde z,\tilde q,\tilde \phi)\in\R^{N}\times \hat{P} :  g(\alpha/\sigma,\tilde z,\tilde q,\tilde \phi) = \inf_{(z,q,\phi)\in\R^{N}\times \hat{P} } \big\{g(\alpha/\sigma,z,q,\phi)\big\}   \bigg\}.
\]
Notice that the optimisation over each $z^i$ can be reduced to a compact set since the objective function is the sum of a bounded and a quadratic function, namely we have $z^i \in \big[-(2|\alpha| \overline{p}/\sigma+\overline{h}+2\overline{c})N , (2 |\alpha|\overline{p}/\sigma+\overline{h}+2\overline{c})N\big],$
where $\overline{p}$, $\overline{h}$ and $\overline{c}$ are defined in \eqref{eq:constants}. Since the correspondence $\alpha \rightrightarrows  \big[-(2|\alpha| \overline{p}/\sigma+\overline{h}+2\overline{c})N, (2|\alpha|\overline{p}/\sigma+\overline{h}+2\overline{c})N\big]$ is continuous and compact-valued, it follows from the maximum Theorem (see \cite[Theorem 17.31]{aliprantis2006infinite}) that $\tilde \mu$ has nonempty compact values and it is upper-hemicontinuous. By \citeauthor*{milgrom2002envelope} \cite[Theorem 2]{milgrom2002envelope} we have that $\tilde F$ is absolutely continuous in $\alpha$ and we have, for any selection $(\tilde z(\alpha),\tilde q(\alpha),\tilde \phi(\alpha)) \in \tilde \mu(\alpha)$ 
\[
\tilde F'(\ell,\alpha) = \sum_{i=1}^N \big(1-a^{\star,i}(\tilde q(\alpha),\tilde z(\alpha))\big)\frac{p_i(\tilde q^{i}(\alpha))}{\sigma}.
\]
Since the maps $a^{\star,i}$ and $p_i$ are uniformly continuous over $A_i$ and $[0,Q^i]$ respectively, and $\tilde \mu $ is upper-hemicontinuous, it follows that $\tilde F'(\ell,\cdot)$ is continuous and bounded.

\medskip
$(iii)$ By the previous point, and the fact that the terminal condition in \eqref{eq:pde-value-function} is null, we have that the assumptions in \citeauthor*{ma2002representation} \cite[Theorem 3.1]{ma2002representation} as well as their Assumption (A1) are satisfied. It follows that the function $\tilde v(t,\ell) := \tilde Y_t^{t,\ell}$, where $(\tilde L^{t,\ell},\tilde Y^{t,\ell}, \tilde Z^{t,\ell})$ is the adapted solution to the FBSDE
\begin{align*}
\tilde L_s^{t,\ell} = \ell + \int_t^s \sigma \mathrm{d}W_r,  \;
\tilde Y_s^{t,\ell} &= \int_s^T \tilde F(\tilde L_r^{t,\ell},Z_r^{t,\ell} ) \mathrm{d}r - \int_s^T \tilde Z_r^{t,\ell} \mathrm{d}W_r, 
\end{align*}
is a viscosity solution to the PDE. Moreover, $\partial_\ell \tilde v$ exists for every $(t,\ell)\in [0,T]\times \mathbb{R}$ and is continuous. By \cite[Corollary 3.2]{ma2002representation}, $\tilde v(t,\cdot)$ has polynomial growth at infinity and $\partial_\ell \tilde v$ is bounded.
\end{proof}

To make a link between the viscosity solution given by the previous proposition and the value of problem \eqref{eq:V0-one-dimensional}, we define the value function of the ISO starting from any $(t,\ell)\in[0,T]\times\R$
\begin{equation}\label{eq:value-function-dynamic}
V(t,\ell) :=  \inf_{(q,\phi,Z)\in\Pc\times \Zc}   \E^{\P^{a^\star(q,Z)}} \bigg[  \int_t^T  \sum_{i=1}^N \left( h_i(a^{\star,i}(q_s^i,Z_s^i))+\frac{\rho\sigma^2}{2} (Z_s^i)^2+ 2c_i(q^i_s)  \right) \mathrm{d}s + \int_t^T \Lambda( L_s^{t,\ell} - \ell_0) \mathrm{d}s  \bigg], 
\end{equation}
where $L^{t,\ell}$ is the solution to 
\[
 L_s^{t,\ell} = \ell  + \int_t^s \sum_{i=1}^N (1-a^{\star,i}(q_u^i,Z_u^i))p_i(q^i_u)  \mathrm{d}u + \sigma \int_t^s \textrm{d}W^{a^\star(q,Z)}_u, \; s\in[ t,T].
\]
Notice that by definition we have the equality $\hat V_0 = V(0,L_0)$, for the value of problem \eqref{eq:V0-one-dimensional}.

\begin{proposition}\label{prop:polynomial-growth}
The function $V$ defined in \eqref{eq:value-function-dynamic} is a viscosity solution of the {\rm HJB} equation \eqref{eq:pde-value-function} and $V(t,\cdot)$ has polynomial growth at infinity. Consequently, $V_\ell$ exists and it is bounded.
\end{proposition}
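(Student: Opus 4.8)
The plan is to prove that $V$ is a viscosity solution of \eqref{eq:pde-value-function} with polynomial growth; the uniqueness statement of \Cref{prop-solution-pde} then forces $V$ to coincide with the solution $\tilde v$ constructed there, and since $\tilde v$ is continuously differentiable in space with bounded derivative, the final ``consequently'' follows at once.

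For the growth estimate I would argue by two-sided bounds. The lower bound is immediate: every term in the running cost of \eqref{eq:value-function-dynamic} is non-negative, since $h_i\geq 0$, $c_i\geq 0$, the penalty $\frac{\rho\sigma^2}{2}(Z^i_s)^2\geq 0$, and $\Lambda\geq 0$ by assumption, so $V\geq 0$. For the upper bound I would evaluate the cost at the particular admissible control given by $Z\equiv 0$ together with any fixed feasible plan $(q,\phi)\in\hat P$. With $Z\equiv 0$ the minimiser \eqref{eq:a-star} yields $a^{\star,i}\equiv 0$, so the running costs $h_i(0)+2c_i(q^i)$ are bounded by $\bar h+2\bar c$, while under $\P^{a^\star(q,0)}$ the state $L^{t,\ell}$ equals $\ell$ plus a bounded drift plus a scaled Brownian motion. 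Exploiting the quadratic growth of $\Lambda$ one then obtains $\E^{\P^{a^\star(q,0)}}\big[\int_t^T\Lambda(L^{t,\ell}_s-\ell_0)\,\mathrm{d}s\big]\leq C(1+|\ell|^2)$ uniformly in $(t,\ell)$, whence $0\leq V(t,\ell)\leq C'(1+|\ell|^2)$ has quadratic, hence polynomial, growth.

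The core of the argument is the viscosity property. I would first establish the dynamic programming principle: for every stopping time $\theta$ valued in $[t,T]$,
\[
V(t,\ell)=\inf_{(q,\phi,Z)\in\Pc\times\Zc}\E^{\P^{a^\star(q,Z)}}\bigg[\int_t^\theta\sum_{i=1}^N\Big(h_i\big(a^{\star,i}(q^i_s,Z^i_s)\big)+\tfrac{\rho\sigma^2}{2}(Z^i_s)^2+2c_i(q^i_s)\Big)\mathrm{d}s+\int_t^\theta\Lambda(L^{t,\ell}_s-\ell_0)\,\mathrm{d}s+V\big(\theta,L^{t,\ell}_\theta\big)\bigg].
\]
From this, the supersolution inequality is obtained by freezing a single constant control $(z,q,\phi)$, applying It\^o's formula to a smooth test function touching $V$ from below, dividing by the length of a shrinking time interval and passing to the limit; the subsolution inequality follows symmetrically by inserting near-optimal controls. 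The coercivity in $z$ provided by the quadratic penalty guarantees that the infimum defining $G$ is attained over a compact set and that the expectations remain finite thanks to the integrability encoded in $\Zc$, while the continuity of $G$ required by the viscosity notions was already established inside the proof of \Cref{prop-solution-pde}. I expect the rigorous proof of the dynamic programming principle to be the main obstacle: in the present weak formulation the controls act through the equivalent measures $\P^{a^\star(q,Z)}$ and the component $Z$ may be unbounded, so one must rely on measurable-selection together with conditioning and pasting arguments to concatenate $\varepsilon$-optimal controls across $\theta$ while remaining inside $\Pc\times\Zc$. Once the viscosity property and the growth bound are in hand, \Cref{prop-solution-pde} yields $V=\tilde v$, which completes the proof.
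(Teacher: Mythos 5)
Your proposal is correct and follows essentially the same route as the paper: the same suboptimal-control argument ($Z\equiv 0$ together with a fixed feasible plan, the quadratic growth of $\Lambda$, and standard SDE moment estimates) for the polynomial growth bound, followed by identification of $V$ with the smooth solution of \Cref{prop-solution-pde} via uniqueness of viscosity solutions. The only difference is presentational: where you sketch the dynamic programming principle and test-function argument by hand, the paper simply invokes \cite[Propositions 4.3.1 and 4.3.2]{pham2009continuous}, which encapsulate precisely that standard argument.
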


\begin{proof}[Proof of {\rm Proposition \ref{prop:polynomial-growth}}]

Since $V(t,\ell)$ is an infimum, it has an upper bound given by choosing the controls $Z \equiv 0$ and $(q,\phi) \equiv (q_0,\phi_0)$, where $(q_0,\phi_0)$ is an arbitrary element of $\hat{P}$. We have therefore
\[
V(t,\ell) \leq   \E^{\P^{a^\star(q_0,0)}}  \bigg[ \sum_{i=1}^N  \big( h_i(a^{\star,i}(q_0^i,0))+ 2c_i(q^i_0)  \big)(T-t) + \int_t^T \Lambda( L_s^{t,\ell} - \ell_0) \mathrm{d}s  \bigg].
\]
From \cite[Theorem 1.3.15]{pham2009continuous}, we have for every $(q,Z)\in\Pc\times \Zc$, $\E^{\P^{a^\star(q,Z)}}  \big[\sup_{t \leq s \leq T} |L_s^{t,\ell}|^2 \big] \leq K_0(1+ \ell^2),$
with a constant $K_0>0$. This implies that for some $K_1>0$
\[
\E^{\P^{a^\star(q_0,0)}} \bigg[  \int_t^T | \Lambda( L_s^{t,\ell} - \ell_0)|  \mathrm{d}s  \bigg] \leq  \E^{\P^{a^\star(q_0,0)}} \bigg[  \int_t^T K_\Lambda(1+| L_s^{t,\ell} - \ell_0|^2)  \mathrm{d}s  \bigg] \leq K_1(1+(\ell-\ell_0)^2).
\]
It is clear that $V$ is bounded by below so we conclude that $V$ has quadratic growth. The fact that $V$ is a viscosity solution to the PDE \eqref{eq:pde-value-function} is standard in stochastic control. For instance, we can use \citeauthor*{pham2009continuous} \cite[Propositions 4.3.1 and 4.3.2]{pham2009continuous}, since the function $G$ is finite-valued. To conclude, from the uniqueness of viscosity solutions to equation \eqref{eq:pde-value-function} we obtain that $V$ must coincide with the function in Proposition \ref{prop-solution-pde}. Hence, $V_\ell$ exists and it is bounded.
\end{proof}

We turn now our attention to the optimal controls in problem \eqref{eq:V0-one-dimensional}. Let us denote by $\mu^\star: \R \rightrightarrows \R^N \times \hat P$ the correspondence of optimisers in $g$, for given $\alpha$, that is 
\[
\mu^\star (\alpha) = \bigg\{ (z^\star, q^\star, \phi^\star)\in\R^{N}\times \hat{P} :  g(\alpha,z^\star,q^\star,\phi^\star) = \inf_{(z,q,\phi)\in\R^{N}\times \hat{P} } g(\alpha,z,q,\phi)   \bigg\}.
\]

\begin{lemma} \label{lemma-optimizers}
The correspondence $\mu^\star$ has nonempty compact values and it is upper-hemicontinuous.
\end{lemma}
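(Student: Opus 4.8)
The plan is to reproduce the argument already used for the auxiliary correspondence $\tilde\mu$ in the proof of \Cref{prop-solution-pde}$(ii)$: combine an \emph{a priori} compactness bound on the minimisers with Berge's Maximum Theorem. First I would record that the integrand $g(\alpha,\cdot)$ is jointly continuous on $\R\times\R^N\times\hat P$. This follows from the continuity of the maps $p_i$, $c_i$ and $h_i$ assumed in \Cref{ap:model}, together with the continuity of the single-valued selection $a^{\star,i}$ established in \Cref{lemma:a-star}. Moreover $\hat P$ is a closed subset of the compact box $\prod_{i}[0,Q^i]\times\prod_{e}[\underline{\phi}^e,\overline{\phi}^e]$, hence compact, and it is non-empty by assumption.

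The one genuine obstacle is that the minimisation runs over the \emph{non-compact} set $\R^N\times\hat P$, the non-compactness coming from the variable $z$. I would remove it exactly as before, exploiting that $z$ enters $g$ only through the coercive quadratic term $\frac{\rho\sigma^2}{2}\sum_i (z^i)^2$ together with contributions that are uniformly bounded on $A_i\times[0,Q^i]$. Comparing the value of $g$ at any candidate minimiser with its value at $z=0$ (for the same $(q,\phi)$), and using $a^{\star,i}\in A_i\subset[0,1]$, $0\le p_i\le\bar p$, $0\le h_i\le\bar h$ and $0\le c_i\le\bar c$, yields the explicit estimate
\[
\sum_{i=1}^N (z^{\star,i})^2 \le \frac{2N}{\rho\sigma^2}\big(2|\alpha|\bar p+\bar h+2\bar c\big),
\]
valid for every $(z^\star,q^\star,\phi^\star)\in\mu^\star(\alpha)$, with $\bar p,\bar h,\bar c$ as in \eqref{eq:constants}. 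Consequently $\mu^\star(\alpha)$ is unchanged if the minimisation is restricted to the compact set $B(\alpha)\times\hat P$, where $B(\alpha)$ is the closed ball of radius $r(\alpha):=\big(\tfrac{2N}{\rho\sigma^2}(2|\alpha|\bar p+\bar h+2\bar c)\big)^{1/2}$; since $\alpha\mapsto r(\alpha)$ is continuous, the correspondence $\alpha\rightrightarrows B(\alpha)\times\hat P$ is continuous and compact-valued.

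With this reduction in hand, the conclusion is immediate from the Maximum Theorem (\cite[Theorem 17.31]{aliprantis2006infinite}): the objective $g$ is continuous and the constraint correspondence $\alpha\rightrightarrows B(\alpha)\times\hat P$ is continuous with non-empty compact values, so the associated argmin correspondence is upper-hemicontinuous with non-empty compact values. Identifying this argmin with $\mu^\star$ through the bound above finishes the proof. I expect the only delicate points to be the coercivity estimate and the verification that $r(\alpha)$ depends continuously on $\alpha$; everything downstream is a routine application of Berge's theorem, mirroring the earlier treatment of $\tilde\mu$.
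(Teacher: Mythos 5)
Your proposal is correct and takes essentially the same route as the paper, which itself just invokes part $(ii)$ of the proof of \Cref{prop-solution-pde}: reduce the optimisation in $z$ to a compact set depending continuously on $\alpha$ (exploiting the coercive quadratic term $\frac{\rho\sigma^2}{2}\sum_i(z^i)^2$ against the bounded remaining terms), then apply Berge's Maximum Theorem \cite[Theorem 17.31]{aliprantis2006infinite} to the continuous, compact-valued constraint correspondence. Your explicit comparison-at-$z=0$ estimate and the identification of the restricted argmin with $\mu^\star$ merely make rigorous what the paper states without detail.
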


\begin{proof}[Proof of {\rm Lemma \ref{lemma-optimizers}}]
The proof is identical to part $(ii)$ in the proof of \Cref{prop-solution-pde}. The optimisation over each $z^i$ can be reduced to a compact set, namely we have $z^i \in \big[-(2|\alpha| \overline{p}+\overline{h}+2\overline{c})N , (2 |\alpha|\overline{p}+\overline{h}+2\overline{c})N\big].$
From the maximum Theorem \cite[Theorem 17.31]{aliprantis2006infinite}, $ \mu^\star$ has nonempty compact values and it is upper-hemicontinuous.
\end{proof}

We can finally prove \Cref{thr-iso-main} as a direct consequence of the results in this section.

\begin{proof}[Proof of {\rm Theorem \ref{thr-iso-main}}]

$(i)$ We have that $\hat V_0 = V(0,L_0)$. Result is immediate from Propositions \ref{prop-solution-pde} and \ref{prop:polynomial-growth}.

\medskip
$(ii)$ First, the control $(q^\star,\phi^\star,Z^\star)\in\Pc\times\Zc$. Indeed, from the proof of \Cref{lemma-optimizers} we have that $Z^\star$ is bounded which implies that $Z^\star\in\Zc$. Second, since the controls $(q^\star,\phi^\star,Z^\star)$ attain the minimum in the Hamiltonian function $G$, they are the optimal controls in the reformulated problem of the ISO.

\medskip
$(iii)$ It follows from \Cref{th:main}, that the optimal remuneration is given by $Y_T^{y,Z^\star}$.
\end{proof}

\section{Simpler problem for the ISO} \label{ap:example}

\begin{proof}[Proof of Proposition \ref{prop-continuity-sec3}]
We have deduced that $v^d(q,\phi)=\hat v^d(q,\phi)  - \sum_{i=1}^N \frac{1}{r}\log(-R_0^i)$ and  $\hat v^d(q,\phi)= Y^{q,\phi,0,L_0}_0$, where $(L^{q,\phi,0,L_0},Y^{q,\phi,0,L_0}, Z^{q,\phi,0,L_0})$ is the adapted solution to the FBSDE
\begin{align*}
 L_s^{0,L_0} = L_0 + \int_0^s \sigma \mathrm{d}W_r,  \;
 Y_s^{q,\phi,0,L_0} &= \int_s^T \tilde F^{q,\phi}( L_r^{0,L_0},Z_r^{q,\phi,0,L_0} ) \mathrm{d}r - \int_s^T Z_r^{q,\phi,0,L_0} \mathrm{d}W_r, 
\end{align*}
with $\tilde{F}^{q,\phi}:\R\times\R \longrightarrow\R$ defined by
\[
\tilde F^{q,\phi}(\ell,\alpha)  = \inf_{z \in\R^{N} } \bigg\{  \sum_{i=1}^N \bigg( \alpha  (1-a^{\star,i}(q,z)) \frac{p_i(q^i)}{\sigma} +  h_i(a^{\star,i}(q,z))+\frac{\rho\sigma^2}{2} (z^i)^2+2c_i(q^i) \bigg)\bigg\}  + \Lambda(\ell - \ell_0).
\]
As in the proof of Proposition \ref{prop-solution-pde}, we check that
\[
| \tilde F^{q,\phi}(\ell,\alpha_1) - \tilde F^{q,\phi}(\ell,\alpha_2) |   \leq   |\alpha_1 - \alpha_2| N \frac{\bar{p}}{\sigma}, \quad \forall (\ell,q,\phi)\in \R\times\hat P, \forall \alpha_1,\alpha_2 \in \R,
\]
and for every $(\ell,\alpha)\in\R^2, \forall (q_1,\phi_1), (q_2,\phi_2)\in\hat P$
\[
 | \tilde F^{q_1,\phi_1}(\ell,\alpha) - \tilde F^{q_2,\phi_2}(\ell,\alpha) |     \leq   \sup_{z\in\R^N} \sum_{i=1}^N  \bigg| \alpha  (1-a^{1,i}(z)) \frac{p_i(q_1^i)}{\sigma}  - \alpha  (1-a^{2,i}(z)) \frac{p_i(q_2^i)}{\sigma}  +  \Delta h_i(z) +2 \Delta c_i  \bigg|, 
\]
with $a^{1,i}(z)= a^{\star,i}(q_1,z)$, $a^{2,i}(z)= a^{\star,i}(q_2,z)$, $\Delta h_i(z) = h_i(a^{1,i}(z)) - h_i(a^{2,i}(z))$, $\Delta c_i = c_i(q_1^i) - c_i(q_2^i)$. By restricting the supremum over each $z^i$ to a compact set, as in the proof of Lemma \ref{lemma-optimizers}, and by the uniform continuity of all the maps, it follows that  $ | \tilde F^{q_1,\phi_1}(\ell,\alpha) - \tilde F^{q_2,\phi_2}(\ell,\alpha) |  \to 0$ as $(q_1,\phi_1) \to (q_2,\phi_2)$. Then, the hypothesis in \cite[Proposition 2.4]{el1997backward} are satisfied so we conclude that $\hat v^d$ is continuous. The existence of a minimizer $(q^\star,\phi^\star)\in\hat P$ for problem \eqref{eq:ISO-sec3} holds because $\hat P$ is compact.

\end{proof}

\begin{proof}[Proof of Proposition \ref{prop:explicit-example}]

Under the given assumptions, the optimal efforts for the providers are given by
\[
a^{\star,i}_s\in\argmin_{a\in A_i} \bigg\{\frac{h_i}{2}a^2 -Z_s^i p_i (1-a)q^i_s \bigg\} \Longrightarrow a^{\star,i}(z)=  \Pi_{A_i}\bigg( - \frac{z^i p_i q_i}{h_i } \bigg).
\]
Optimising over $z$, we see that the values for which $- \frac{z^i p_i q_i}{h_i }$ is outside of $A_i$ are not optimal so we can compute the infimum in $\hat G$ directly by replacing $a^{\star,i}(z)=  - \frac{z^i p_i q_i}{h_i }$.
 Let $A_i=[0,A_i]$ and $\hat A_i :=-\frac{h_i A_i}{p_i q_i} $, then
\begin{align*}
\hat G & =  \sum_{i=1}^N \bigg( \inf_{z^i\in[\hat A_i,0 ]}  \bigg\{  \frac{ \alpha  z^i (p_i)^2 (q_i)^2}{h_i }  + \frac{( z^i)^2 (p_i)^2 (q_i)^2}{2h_i } +\frac{\rho\sigma^2}{2} (z^i)^2   \bigg\}  + \alpha  p_i q_i + 2c_i(q_i) \bigg) +\frac12\gamma\sigma^2+ \lambda (\ell-\ell_o)^n .
\end{align*}
We have the optimal  $z^i=\Pi_{[\hat A_i,0]}\left(\frac{-\alpha (p_i q_i)^2}{\sigma^2 \rho h_i + (p_i q_i)^2}\right)$ and, recalling $M_i=\frac{A_i h_i(\sigma^2 \rho h_i + (p_iq_i)^2)}{(p_iq_i)^3}$, we get
\begin{align}\label{eq:G-explicit-example}
\hat G  & = \sum_{i=1}^N  \bigg(   \alpha   p_i q_i   +  2c_i(q_i) - \frac{ \alpha^2 (p_i q_i)^4}{2 h_i(\sigma^2 \rho h_i + (p_i q_i)^2) } \mathbf{1}_{\{\alpha\in[0,M_i]\}}   + \hat A_i \bigg(\frac{2\alpha (p_i q_i)^2 + \hat A_i(\sigma^2 \rho h_i + (p_i q_i)^2)}{2h_i} \bigg) \mathbf{1}_{\{\alpha \geq M_i \} }\bigg) \\
& \nonumber ~~~ + \frac12\gamma\sigma^2+ \lambda (\ell-\ell_o).
\end{align}
Assume for a while that the solution is such that $v_\ell \in [0,M_i]$, so then the PDE becomes 
\[
v_t + \mu  v_\ell + B   v_{\ell\ell} - C(v_\ell)^2 + D +  \lambda (\ell-\ell_o)^n = 0, ~ v(T,\ell)=0,
\]
with 
\[
\mu :=\sum_{i=1}^N p_i q_i,~ B:=\frac{1}{2}\sigma^2, ~C:=\sum_{i=1}^N \frac{(p_i q_i)^4}{2h_i(\sigma^2 \rho h_i + (p_i q_i)^2)}, ~D:=2 \sum_{i=1}^N c_i(q_i).
\]
Let $u=e^{-\frac{C}{B} v}$, then $u$ satisfies the PDE (provided that $u_\ell/u$ is bounded)
\[
u_t + \mu u_\ell + B  u_{\ell\ell} - \frac{CD}{B} u - \frac{\lambda C}{B}   u (\ell-\ell_o)^n = 0, ~ u(T,\ell)=1. 
\]
Therefore we have from the Feynman–Kac formula $u(t,\ell) = \E^\Q\big[  \textrm{e}^{\frac{-C}{B}\int_t^T(D+\lambda(L_s-\ell_0)^n) \textrm{d}s}   \big| L_t = \ell \big],$
under the measure $\Q$ such that $\textrm{d} L_t =   \sum_{i=1}^N p_i q_i \textrm{d}t + \sigma \textrm{d}W^\Q_t.$ Then we have
\[
u(t,\ell) = \textrm{e}^{\frac{-CD}{B}(T-t)} \E^\Q\bigg[   \textrm{e}^{\frac{-C\lambda}{B} \int_0^{T-t} \big( \ell - \ell_0 + \mu s + \sigma W^\Q_s\big)^n    \textrm{d}s }  \bigg] = \textrm{e}^{\frac{-CD}{B}(T-t)} \E^\Q\bigg[   \textrm{e}^{-\hat\gamma \int_0^{T-t} W_s^{\Q,\hat\mu}   \textrm{d}s }  \bigg],
\]
with $\hat\gamma: =\frac{C\lambda\sigma}{B}$, $\hat\mu := \frac{\mu}{\sigma}$ and $W_s^{\Q,\hat\mu}:=\hat\mu s + W_s^\Q$ with $W_0^\Q=\frac{\ell-\ell_0}{\sigma}$. Then we have from the result 2.1.8.3 (page 262) in \citeauthor*{borodin2002handbook} \cite{borodin2002handbook}
\[
u(t,\ell) = \textrm{e}^{\frac{-CD}{B}(T-t)} \textrm{e}^{-\frac{C\lambda}{B}(\ell-\ell_0)(T-t)} \textrm{e}^{-\frac{C\lambda\mu}{2B}(T-t)^2} \textrm{e}^{\frac{C^2\lambda^2}{3B}(T-t)^3}.
\]
Notice that $\frac{u_\ell}{u} = -\frac{C\lambda}{B}(T-t)$, so then $v_\ell = \lambda(T-t)\in [0,M_i]$ due to our assumptions. Indeed, we can also compute the value function of the ISO
\[
v(t,\ell)= -\frac{B}{C}\ln(u) = D(T-t) + \lambda(\ell-\ell_0)(T-t) + \frac{\lambda\mu}{2}(T-t)^2-\frac{C\lambda^2}{3}(T-t)^3.
\]
Since we found a solution to PDE \eqref{eq:pde-value-function-sec3} with polynomial growth, we know from Theorem \ref{thr-iso-main-sec3} that we have the value of the problem with fixed plan by $\hat v^d(q,\phi)=v(0,L_0)$. It follows that the optimal process $Z$ and the optimal efforts are
\[
Z_t^{\star,i}=\frac{- \lambda(T-t) (p_i q_i)^2}{\sigma^2 \rho h_i + (p_i q_i)^2} = (t-T)z_i,\; a^{\star,i}_t =  - \frac{Z^{i,\star}_t p_i q_i}{h_i } = \frac{  \lambda(T-t) (p_i q_i)^3}{h_i(\sigma^2 \rho h_i + (p_i q_i)^2)} = \frac{(T-t) z_i p_i q_i}{h_i }.
\]
Finally, we see that the optimal contract is given by
\[
\xi^{\star,i} = y^i  + \int_0^T f^i(s,Z^{\star,i}_s)  \mathrm{d}s + \int_0^T Z^{\star,i}_s \mathrm{d}L_s,
\] 
where
\[
f^i(s,Z^{\star,i}_s) = \frac{ h_i}{2}(a_s^{\star,i})^2 + \frac{\rho\sigma^2}{2} (Z^{\star,i}_s)^2  + c_i(q_i)  - Z^{\star,i}_s \sum_{i=1}^N p_i(1-a_s^{\star,i})q_i .
\]
\end{proof}

\end{appendix}

\end{document}